\numberwithin{equation}{section}
\newcounter{dummy}
\numberwithin{dummy}{section}
\newtheorem{theorem}[dummy]{Theorem}
\newtheorem{corollary}[dummy]{Corollary}
\newtheorem{lemma}[dummy]{Lemma}
\newtheorem{assumption}[dummy]{Assumption}
\newcommand{\Var}{\mathop{\rm Var}\nolimits}
\newcommand{\eps}{\varepsilon}
\renewcommand{\epsilon}{\varepsilon}
\begin{document}

\title{Optimal recovery and uncertainty quantification for distributed Gaussian process regression}

\author{
Amine Hadji\\[.5ex]
{\em Mathematical Institute, Leiden University}
\and 
Tammo Hesslink\\[.5ex]
{\em Department of Mathematics, University of Amsterdam}
\and
Botond Szab\'o\footnote{This project has received funding from the European Research Council (ERC) under the
European Union's Horizon 2020 research and innovation programme (grant agreement No.
101041064).}\\[.5ex]
{\em Department of Decision Sciences, Bocconi University}\\ {\em Bocconi Institute for Data Science and Analytics}
}
\date{}

\maketitle

\abstract{Gaussian Processes (GP) are widely used for probabilistic modeling and inference for nonparametric regression. However, their computational complexity scales cubicly with the sample size rendering them unfeasible for large data sets. To speed up the computations various distributed methods were proposed in the literature. These methods have, however, limited theoretical underpinning. In our work we derive frequentist theoretical guarantees and limitations for a range of distributed methods for general GP priors in context of the nonparametric regression model, both for recovery and uncertainty quantification. As specific examples we consider covariance kernels both with polynomially and exponentially decaying  eigenvalues. We demonstrate the practical performance of the investigated approaches in a numerical study using synthetic data sets.}

\section{Introduction}\label{sec: intr}
Gaussian processes are highly popular in statistical and machine learning. They are widely used for probabilistic modeling and inference, taken as priors for functional parameters in the Bayesian approach, see for instance the monographs \cite{rasmussen:williams:2006,kocijan2016modelling, ghosal2017fundamentals}.

Gaussian processes are stochastic processes whose finite-dimensional distributions are multivariate Gaussian vectors. They are specified by their mean and covariance function. In the Bayesian analysis one typically considers mean-zero Gaussian processes, whereas the covariance function can be chosen freely, which influence directly the shape of the Gaussian process prior. They are often used in context of the nonparametric regression model, nonparametric classification,... etc. In our analysis we focus on the former one, i.e. we consider $(X_i,Y_i)$, $i=1,...,N$, i.i.d pairs of random variables satisfying 
\begin{align}\label{eq: nonpara reg}
Y_i=f(X_i)+\varepsilon_i,~\varepsilon_i\stackrel{iid}{\sim}\mathcal{N}(0,\sigma^2),
\end{align}
with design points $X_i$, $i=1,...,N$, belonging to some compact set $\mathcal{X}\subset\mathbb{R}^d$, observations $Y_i\in\mathbb{R}$, noise variance $\sigma^2>0$ and functional parameter $f$. In the Gaussian process regression $f$ is modeled with a mean-zero Gaussian Process (GP) prior $GP(0,K)$, where $K:\mathbb{R}^d\times\mathbb{R}^d\mapsto\mathbb{R}$ is a positive definite kernel. The popularity of Gaussian Process regression stems from the explicit formula of the corresponding posterior distribution due to conjugacy.  Furthermore, since the outcome of a Bayesian procedure is not only a single estimate of the regression function but a whole distribution, the procedure provides built-in uncertainty quantification. Among the numerous fields using Gaussian process regression, one can find computer experiment emulations \cite{currin:mitchell:morris:ylvisaker:1991,kennedy:ohagan:2001,mitchell:sacks:welch:wynn:1989}, spatial data modeling \cite{banerjee:gelfand:ofinley:sang:2008,cressie:2015} and geostatical kriging \cite{matheron:1973,stein:2012}, just to mention a few. 

In our paper we take a frequentist perspective and assume that the data in \eqref{eq: nonpara reg} is generated from a fixed ``true'' regression function $f_0$ and we are interested how well this functional parameter $f_0$ can be recovered from the posterior distribution as the sample size increases and how accurate and reliable is the uncertainty quantification resulting from the posterior distribution. These questions have been one of the main focus of the Bayesian asymptotics literature. Theoretical underpinning  were derived for a wide range of Gaussian processes priors and models also beyond the regression framework, both for recovery  \cite{vzanten:vdv:07,vvvz08,vzanten:vdv:11, knapik, bhattacharya:2014,rousseau:szabo:2015:main,yang:2016} and uncertainty quantification  \cite{ castillo:nickl:2013, szabo:vdv:vzanten:13,belitser:2014, sniekers:2015,rousseau:szabo:16:main,yoo:2016,bhattacharya:pati:yun:17,ray2017,hadji:szabo:2021}.

The main drawback of GP regression, however, is its computational complexity. To compute the posterior covariance, one needs to invert an $N\times N$ matrix, which in general requires a cubic algorithm in $N$. Besides, the memory requirement is also substantial, $O(N^2)$. These render the problem computationally infeasible for large data sets. Scaling up the algorithm became of particular interest in recent years and lead to the study of various approximation methods, for instance by considering sparse approximations of the matrices \cite{gibbs:poole:stockmeyer:1976,saad:1990,quinonero:rasmussen:2005}, variational Bayes approximations \cite{titsias:2009, burt2019, nieman2021contraction} or distributed methods. In distributed learning and statistical methods, the data are divided over several machines which process the data locally and then the local results are aggregated at a central server or machine. These methods, beside scaling up the computations, also help protecting privacy as the whole data set doesn't have to be stored at a single, central database. 

Various distributed methods were proposed for Gaussian process regression, recent examples include Consensus Monte Carlo \cite{scott:blocker:et.al:16}, WASP \cite{srivastava:2015a}, Bayesian Committee Machine \cite{tresp2000bayesian,deisenroth:2015}, and Distributed Kriging \cite{guhaniyogi2017divide}, to mention but a few. The theoretical properties of a wide range of distributed GP methods were investigated in \cite{szabo:vzanten:19} in context of the Gaussian white noise model. Frequentist theoretical guarantees but also limitations were derived for these procedures both for recovery and uncertainty quantification. However, the Gaussian white noise model serves only as a starting, benchmark model allowing explicit analytic computations and does not imply guarantees for the more complicated, but practically more relevant Gaussian regression model, which is the focus of our paper.

The organization of the paper is as follows. In Section \ref{sec: GP fram} we recall the distributed multivariate random design regression model where we carry out our analysis. In Section \ref{sec: distributed} we introduce various distributed Bayesian methods, derive optimal contraction rates for them and characterize the frequentist coverage of the corresponding (inflated) credible sets.  We demonstrate the applicability of our general, abstract results in a numerical analysis using synthetic data sets in Section \ref{sec:simulation}. We summarize our results and discuss open questions in Section \ref{sec:discussion}. The proofs are deferred to Section \ref{sec: proofs}, Section \ref{sec:corollaries} and the Appendix.

\subsection{Notations}

We use the notation $\mathbb{D}_N=(Y_i,X_i)_{i=1,...,N}$ for the observations and $P_0$ and $E_0$ for the probability measure and expected value corresponding to the underlying regression function $f_0$. For matrices $A \in\mathbb{R}^{d\times n}$ and $B\in\mathbb{R}^{d\times n'}$, let $K(A,B)$, denote the $n\times n'$ matrix of $(K(A_{\cdot i},B_{\cdot j}))_{1\leq i\leq n,\, 1\leq j\leq n'}$, where $K$ stands for the covariance kernel of the prior.

Furthermore, let $\|\cdot\|$ stands for the $L_2$-norm. For two sequences $a_n,b_n$ we write $a_n\lesssim b_n$ if there exists a constant $C>0$ such that $a_n/b_n\leq C$. We denote by $a_n\asymp b_n$ if $a_n\lesssim b_n$ and $b_n\lesssim a_n$ holds simultaneously.  In the manuscript $C$ and $c$ denote constants not depending on $n$ and their values might change from line to line.

\section{GP regression framework}\label{sec: GP fram}
\subsection{Standard (non-distributed) setting}\label{subsec: mod}


In our analysis we consider the model \eqref{eq: nonpara reg} and for simplicity we take $\mathcal{X}=[0,1]^d$, assume that the design points are uniformly distributed $X_i\stackrel{iid}{\sim} U[0,1]^d$ and $\sigma^2\gtrsim 1$ to be known. We endow the functional parameter $f$ with a stationary Gaussian process prior $GP(0,K)$, where $K:\mathbb{R}^d\times\mathbb{R}^d\mapsto\mathbb{R}$ is a positive definite, stationary kernel. Then by conjugacy, the posterior distribution of $f$ is also a Gaussian process, i.e. $f|\mathbb{D}_N\sim$GP$(\hat{f}_N,\hat{C}_N)$, where for any $x,x'\in[0,1]^d$,
\begin{align}
\label{eq: postmean}
&\hat{f}_N(x)=K(x,\mathbb{X})[K(\mathbb{X},\mathbb{X})+\sigma^2I_N]^{-1}\mathbb{Y},\\
\label{eq: postcov}
&\hat{C}_N(x,x')=K(x,x')-K(x,\mathbb{X})[K(\mathbb{X},\mathbb{X})+\sigma^2 I_N]^{-1}K(\mathbb{X},x'),
\end{align}
with $\mathbb{X}\in [0,1]^{d\times N}$, $\mathbb{Y}\in \mathbb{R}^{N}$  are the collection of design points and observations, respectively, and $I_N$ denotes the $N\times N$ identity matrix, see also Chapter 2 of \cite{rasmussen:williams:2006}.

We assume that the eigenfunctions $\{\psi_j\}_{j\in\mathbb{N}^d}$ of the above covariance kernel $K$ factorize, i.e.
\begin{align}\label{eq: eigenfunctions}
\psi_j=\prod_{k=1}^d \psi_{j_k}, j\in\mathbb{N}^d,
\end{align}
where $\{\psi_{j_k}\}_{j_k\in\mathbb{N}}$ are the eigenfunctions corresponding to the one dimensional kernel on $[0,1]$.  We further assume that the eigenfunctions of the kernel $K$ are bounded. 
\begin{assumption}\label{ass: bound eigenf}
There exists a global constant $C_{\psi}>0$ such that the eigenfunctions $\{\psi_j\}_{j\in\mathbb{N}^d}$ of $K$ satisfy $|\psi_j(t)|\leq C_{\psi}$ for all $j\in\mathbb{N}^d, t\in\mathcal{X}$.
\end{assumption}

The corresponding eigenvalues of $K$ are then of the form
\begin{align}\label{eq: eigenvalues}
\mu_j=\prod_{k=1}^d \mu_{j_k}, j\in\mathbb{N}^d,
\end{align}
with $\{\mu_{j_k}\}_{j_k\in\mathbb{N}}$ the eigenvalues of the $k$-th component of the kernel \cite{berlinet:ta:2004}. Although our results hold more generally, as specific examples we consider polynomially and exponentially decaying eigenvalues.
\begin{assumption}\label{ass: bound eigenvalues}
The one dimensional eigenvalues $\mu_i$, $i\in\mathbb{N}$ are either
\begin{itemize}
\item Polynomially decaying: 
\begin{equation}
C^{-1} i^{-1-2\alpha/d}\leq  \mu_i\leq C i^{-1-2\alpha/d},\label{assump:poly}
\end{equation}
 for some $\alpha,C>0$, or
\item Exponentially decaying: 
\begin{equation}
C^{-1} b e^{-a i }\leq  \mu_i\leq C b e^{-a i },\label{assump:exp}
\end{equation}
for some $a,b,C>0$.
\end{itemize}
\end{assumption}


In nonparametric statistics, it is common to assume that the underlying functional parameter of interest belongs to some regularity class. In our analysis we consider Sobolev-type of regularity classes defined with the basis $\psi_j$, i.e. for any $\beta>0$ and $B>0$, define as in \cite{benyi:oh:13,hunter:13,cobos:kuhn:sickel:15} the function space
\begin{align}\label{Sobolev func}
\Theta^{\beta}(B)=\{f=\sum_{j\in\mathbb{N}^d}f_j\psi_j\in L_2([0,1]^d):\sum_{j\in\mathbb{N}^d}\Big(\sum_{i=1}^d j_i^2\Big)^{\beta}f_j^2\leq B^2\}.
\end{align}
For the Fourier basis or the basis corresponding to the Mat\'ern covariance kernel, $\Theta^{\beta}(B)$ is equivalent to $\beta$-smooth Sobolev balls and are known as \textit{isotropic Sobolev spaces}, see \cite{cobos:kuhn:sickel:15}.

The frequentist properties of Gaussian process priors for recovery are well understood in the literature. It was shown in various specific examples and choices of priors that  for appropriately scaled Gaussian priors the corresponding posterior can recover the underlying functional parameter of interest $f_0\in\Theta^{\beta}(B)$ with the optimal minimax estimation rate $N^{-\beta/(d+2\beta)}$, see for instance \cite{vzanten:vdv:07,vvvz08,vzanten:vdv:11}.  Another, from a practical perspective very appealing property of Bayesian methods is the built-in uncertainty quantification. Bayesian credible sets accumulate prescribed (typically $95\%$) posterior mass and can take various forms. In our analysis we consider $L_2$ credible balls, i.e. we define the credible set as $\hat{B}_N=\{f:\|f-\hat{f}_N\|\leq r_{\gamma}\}$, satisfying $\Pi(f\in\hat{B}_N|\mathbb{D}_N)=1-\gamma$, for some $\gamma\in(0,1)$. Credible sets do not provide automatically valid confidence statements. In recent years the frequentist coverage properties of Bayesian credible sets were widely studied and it was shown that for appropriate choices of the prior distribution the corresponding posterior can provide reliable frequentist uncertainty quantification for functions satisfying certain regularity assumptions, see for instance \cite{szabo:vdv:vzanten:13,sniekers:2015,belitser:2014,ray2017,bhattacharya:pati:yun:17,rousseau:szabo:16:main}. However, our setting wasn't covered by these results yet.

Despite the fact that the mean (\ref{eq: postmean}) and covariance (\ref{eq: postcov}) functions can be explicitly computed, consequently solving the model, their computation requires inverting the matrix $(K(\mathbb{X},\mathbb{X})+\sigma^2I_N)$. The inversion of this $N\times N$ matrix is of $O(N^3)$ computational complexity, which rapidly explodes as $N$ grows. To speed up the computations various approximation methods were considered, our focus here lies on distributed approaches.

\section{Distributed Gaussian Process regression}\label{sec: distributed}
In distributed methods, the data are divided among multiple local machines or servers, and the computations are carried out locally, in parallel to each other. Then the outcome of the computations are transmitted to a center machine or server where they are aggregated in some way forming the final outcome of the distributed method. In the random design regression model it means that we divide the data of size $N$ over $m$ machines (we assume for simplicity that $N$ mod $m=0$), i.e. in each machine $k=1,...,m$ we observe iid pairs of random variables $(X_i^{(k)},Y_i^{(k)})\in [0,1]^d\times \mathbb{R}$, $i=1,...,n$, with $n=N/m$, satisfying
\begin{align}\label{eq: nonpara reg dist}
Y_i^{(k)}=f_0(X_i^{(k)})+\varepsilon_i^{(k)},\quad \varepsilon_i^{(k)}\stackrel{iid}{\sim}\mathcal{N}(0,\sigma^2),
\end{align}
where $f_0:[0,1]^d\mapsto\mathbb{R}$ is the unknown functional parameter of interest, and $\sigma^2>0$ the known variance of the noise. For convenience, let us introduce the notations $\mathbb{D}_{n}^{(k)}=(X_i^{(k)},Y_i^{(k)})_{i=1,...,n}$, $\mathbb{X}^{(k)}_n=(X_i^{(k)})_{i=1,...,n}$, $\mathbb{Y}^{(k)}_n=(Y_i^{(k)})_{i=1,...,n}$ for the whole data set, the design points, and observations in the $k$-th local machine, respectively. Similarly to the non-distributed setting (with only one local machine $m=1$), we assume that the true function belongs to some Sobolev-type of regularity class $f_0\in\Theta^{\beta}(B)$, for given $\beta,B>0$, see \eqref{Sobolev func}.

We consider distributed Bayesian approaches for recovering $f_0$. First, we endow the function $f_0$ in each local machine $k=1,...,m$  with a Gaussian process prior and compute the corresponding local (adjusted) posterior distribution $\Pi^{(k)}(.|\mathbb{D}_{n}^{(k)})$. Then, we transmit the $m$ local posteriors into a central machine where we aggregate them somehow into a global (adjusted) posterior $\Pi^{\dagger}_{n,m}(\cdot|\mathbb{D}_N)$. We further denote by $\hat{f}_{n}^{(k)}$ the local (adjusted) posterior mean, and by $\hat{f}_{n,m}$ the global (adjusted) posterior mean. For quantifying the uncertainty of the distributed Bayesian procedure we consider $L_2$-credible balls resulting in from the aggregated posterior distribution, i.e. let
\begin{align}
\hat{B}_{n,m,\gamma}=\{f:\|f-\hat{f}_{n,m}\|\leq r_{n,m,\gamma}\},\quad  \text{satisfying}\nonumber\\
\Pi^{\dagger}_{n,m}(f\in\hat{B}_{n,m,\gamma}|\mathbb{D}_n)=1-\gamma,\label{def:cred:aggr}
\end{align}
 for some prescribed $\gamma\in(0,1)$.

Distributed methods vary according to the way the local (adjusted) posterior distributions are computed and aggregated to obtain the global posterior. The behavior of the aggregated posterior crucially depends on the applied techniques. To demonstrate this let us consider a naive method where in each local machine we endow $f_0\in\Theta^{\beta}(B)$ with a Gaussian process prior and compute the corresponding unadjusted local posterior distribution $\Pi^{(k)}_{n}(\cdot|\mathbb{D}_{n}^{(k)})$. We consider a centered GP with polynomially decaying eigenvalues as in Assumption \ref{ass: bound eigenvalues} with regularity hyper-parameter matching the regularity of the truth $\alpha=\beta$. Note that this choice of the hyper-parameter is optimal in the non-distributed case (with only one local machine $m=1$). Then the local posteriors are aggregated to a global posterior $\Pi_{n,m}^{\dagger}(\cdot|\mathbb{D}_N)$ in the following way: a draw from the aggregated posterior is taken to be the average of a single draw from each local posteriors. We refer to this simple approach as the naive method. The theorem below shows that this method, in accordance with its name, results in sub-optimal concentration for the posterior mean and sub-optimal contraction rate for the whole posterior distribution as well.

\begin{theorem}\label{thm:counter}
Take $\beta\geq 2$ and consider the function $f_0\in\Theta_\beta(L)$ of the form $f_{0}(x)=c_L\sum_{j=1}^\infty j^{-1-2\beta}(\log j)^{-2} \psi_j(x)$, $x\in[0,1]$, for sufficiently small $c_L>0$. Then for the covariance kernel $K$ with polynomially decaying eigenvalues \eqref{assump:poly} with $\alpha=\beta$ and $d=1$, and $(\log n)^2\ll m\lesssim n^{1/(1+2\beta)}$ the corresponding naive aggregated posterior mean $\hat{f}_{n,m}$ has sub-optimal concentration and the posterior itself achieves suboptimal contraction rate, i.e. 
\begin{align}
E_0\|\hat{f}_{n,m}-f_{0}\|_2^2\geq c (\log N)^{-2}(N/m)^{-\beta/(1+2\beta)},\label{eq: bad_estimate}\\
{E_0\Pi_{n,m}^{\dagger}\Big(f:\|f-f_{0}\|_2^2\leq  c(\log N)^{-2}(N/m)^{-\beta/(1+2\beta)}|\mathbb{D}_N\Big)\to 0},\label{eq: bad_contraction}
\end{align}
for sufficiently small $c>0$, where $\hat{f}_{n,m}$ is the mean of the global posterior $\Pi_{n,m}^{\dagger}$ obtained with the naive method.
\end{theorem}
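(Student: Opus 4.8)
The strategy is to exploit the known explicit bias–variance decomposition of the single-machine GP posterior mean and then track how the averaging step amplifies the bias relative to the variance. Write $f_0=\sum_j f_{0,j}\psi_j$ with $f_{0,j}=c_L j^{-1-2\beta}(\log j)^{-2}$. In a single machine with $n=N/m$ observations and the polynomial kernel with $\alpha=\beta$, standard computations (as in the Gaussian white noise analogue, cf.\ \cite{szabo:vzanten:19}, here transferred to the random-design regression model via the design matrix concentration $n^{-1}\Psi^\top\Psi\approx I$) show that the coordinates of the local posterior mean behave, up to constants, like a Tikhonov-type shrinkage estimator: $\widehat f_{n,j}^{(k)}\approx \frac{\mu_j}{\mu_j+\sigma^2/n}\bigl(f_{0,j}+\text{noise}_j/\sqrt n\bigr)$, with $\mu_j\asymp j^{-1-2\beta}$. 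The first step is to make this approximation rigorous: control the difference between $K(\cdot,\mathbb X^{(k)})[K(\mathbb X^{(k)},\mathbb X^{(k)})+\sigma^2 I_n]^{-1}\mathbb Y^{(k)}$ and its ``population'' (white-noise-type) counterpart, uniformly enough in $j$, using Assumption~\ref{ass: bound eigenf} and a truncation at a resolution level $j\lesssim n^{1/(1+2\beta)}$.

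The second step is the bias computation, which is where the chosen $f_0$ bites. The effective cutoff in each machine is $j^\ast\asymp n^{1/(1+2\beta)}$, and the squared bias of the local mean is $\sum_j\bigl(\frac{\sigma^2/n}{\mu_j+\sigma^2/n}\bigr)^2 f_{0,j}^2\asymp \sum_{j\gtrsim j^\ast} f_{0,j}^2\asymp (j^\ast)^{-1-4\beta}(\log j^\ast)^{-4}\asymp (\log n)^{-4} n^{-(1+4\beta)/(1+2\beta)}$; the crucial point is that this is deterministic and identical across machines, so it is \emph{not} reduced by averaging. Hence $\|E_0\widehat f_{n,m}-f_0\|^2\gtrsim (\log n)^{-4}n^{-(1+4\beta)/(1+2\beta)}$. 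Meanwhile the variance of the averaged mean is reduced by the factor $1/m$: $E_0\|\widehat f_{n,m}-E_0\widehat f_{n,m}\|^2\asymp m^{-1}\sum_{j\le j^\ast}(\sigma^2/n)\asymp m^{-1}n^{-1}j^\ast = m^{-1} n^{-2\beta/(1+2\beta)}=N^{-1}m^{-1+1/(1+2\beta)} n$... more precisely it is of order $(mn)^{-1}j^\ast\cdot(\text{const})$, which is of smaller order than the bias term precisely in the regime $(\log n)^2\ll m$. Combining, $E_0\|\widehat f_{n,m}-f_0\|^2\gtrsim(\log n)^{-4}n^{-(1+4\beta)/(1+2\beta)}$, and since $n^{-(1+4\beta)/(1+2\beta)}=n^{-1}\cdot n^{-2\beta/(1+2\beta)}$ while $(N/m)^{-\beta/(1+2\beta)}=n^{-\beta/(1+2\beta)}$, a short check of exponents (using $\beta\ge2$ and $m\lesssim n^{1/(1+2\beta)}$) yields \eqref{eq: bad_estimate} with an extra $(\log N)^{-2}$ to spare; I would keep track of the logarithmic powers carefully so the stated $(\log N)^{-2}$ comes out cleanly.

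For \eqref{eq: bad_contraction}, the third step is to show the aggregated posterior does not concentrate on a ball of the sub-optimal radius around $f_0$. Since a draw from $\Pi^{\dagger}_{n,m}$ is $m^{-1}\sum_k f^{(k)}$ with $f^{(k)}\sim\Pi^{(k)}_n(\cdot\mid\mathbb D^{(k)}_n)$ independent, the aggregated draw is Gaussian with mean $\widehat f_{n,m}$ and covariance $m^{-2}\sum_k\widehat C_n^{(k)}$; its ``spread'' (total posterior variance) is of order $m^{-1}\cdot(\text{local spread})\asymp m^{-1}n^{-2\beta/(1+2\beta)}$, which is \emph{smaller} than the squared distance $\|\widehat f_{n,m}-f_0\|^2\gtrsim(\log n)^{-4}n^{-(1+4\beta)/(1+2\beta)}$ on the relevant event (again in the regime $m\gg(\log n)^2$). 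Thus by a Markov/Gaussian-concentration argument the posterior mass on $\{f:\|f-f_0\|^2\le c(\log N)^{-2}n^{-\beta/(1+2\beta)}\}$ is $o_{P_0}(1)$: the ball is centred too far from $\widehat f_{n,m}$ relative to the posterior's own scale. Taking $E_0$ and using uniform integrability (the posterior probability is bounded by $1$) gives \eqref{eq: bad_contraction}.

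The main obstacle is the first step: transferring the clean white-noise-model identities to the random-design regression model. One must show that, with $n=N/m$ observations per machine and $m$ possibly growing like $n^{1/(1+2\beta)}$, the empirical Gram matrix $n^{-1}K(\mathbb X^{(k)},\mathbb X^{(k)})$ restricted to the first $j^\ast$ eigenfunctions is close to $\operatorname{diag}(\mu_1,\dots,\mu_{j^\ast})$, and that the tail eigenfunctions contribute negligibly to the bias—this requires the boundedness Assumption~\ref{ass: bound eigenf}, a matrix Bernstein or Rudelson-type concentration inequality, and care that the approximation errors, once squared and summed, stay below the tiny target $(\log n)^{-4}n^{-(1+4\beta)/(1+2\beta)}$. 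The number-theoretic bookkeeping of the polylogarithmic factors (the $(\log j)^{-2}$ in $f_0$ becoming $(\log N)^{-2}$ in the bound) is routine but must be done with attention to whether one sums over $j\le j^\ast$ or $j>j^\ast$.
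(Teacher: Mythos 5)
Your high-level strategy coincides with the paper's (Section \ref{sec:thm:counter}): the bias of the naive local posteriors is deterministic at the single-machine resolution $j^\ast\asymp n^{1/(1+2\beta)}$ and is not reduced by averaging, while the variance of the aggregated mean and the aggregated posterior spread are reduced by $1/m$, and \eqref{eq: bad_contraction} then follows from a Markov-type argument showing the overconfident posterior is centred too far from $f_0$. However, there is a genuine quantitative gap. Taking the coefficients literally as $f_{0,j}=c_Lj^{-1-2\beta}(\log j)^{-2}$, you correctly get squared bias $\sum_{j\gtrsim j^\ast}f_{0,j}^2\asymp(\log n)^{-4}n^{-(1+4\beta)/(1+2\beta)}$; but this is \emph{much smaller} than the target $(\log N)^{-2}(N/m)^{-\beta/(1+2\beta)}=(\log N)^{-2}n^{-\beta/(1+2\beta)}$, so your ``short check of exponents'' cannot deliver \eqref{eq: bad_estimate} --- a lower bound by a smaller quantity proves nothing, and the check in fact goes the wrong way. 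The same mismatch destroys your step for \eqref{eq: bad_contraction}: with your bias value, ``bias dominates the posterior spread $m^{-1}n^{-2\beta/(1+2\beta)}$'' would require $m\gg(\log n)^4 n$, impossible under $m\lesssim n^{1/(1+2\beta)}$; moreover the ball of squared radius $c(\log N)^{-2}n^{-\beta/(1+2\beta)}$ is then far larger than both $\|\hat f_{n,m}-f_0\|_2^2$ and the spread, so the posterior mass of that ball would tend to one, not zero. Your intermediate computations and your conclusions are therefore mutually inconsistent, and the claim that the regime $(\log n)^2\ll m$ makes the variance negligible relative to the bias is only true for a different (larger) bias than the one you computed.

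The resolution, which you should have flagged rather than asserted away, is that the paper's proof works with a function at the boundary of the Sobolev ball: in the proof the \emph{squared} coefficients satisfy $f_{0,j}^2\asymp j^{-1-2\beta}(\log j)^{-2}$, so the tail sum beyond $j^\ast$ is $\asymp n^{-2\beta/(1+2\beta)}(\log n)^{-2}$, and the radius in \eqref{eq: bad_contraction} is read as an $L_2$-radius of order $n^{-\beta/(1+2\beta)}(\log n)^{-1}$ (squared radius $n^{-2\beta/(1+2\beta)}(\log n)^{-2}$). With that bias, $m\gg(\log n)^2$ is exactly the condition making both the aggregated-mean variance and the posterior spread $\frac{\sigma^2}{N}\sum_j\nu_j^*\lesssim m^{-1}n^{-2\beta/(1+2\beta)}$ negligible, which is the heart of the argument. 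A secondary difference: the paper does not carry out the Gram-matrix concentration programme you sketch as your ``main obstacle''; it controls $E_0\|\hat f_{n,m}-\tilde F^*(f_0)\|_2^2$ by re-using the score-function/KRR machinery behind Theorem \ref{thm: contr:general} (the analogue of \eqref{eq: UB:bias} with $\nu_j^*=\mu_jn/(1+\mu_jn)$, together with the $\delta_N^*$ bound, which is where $\beta\geq2$ and the upper bound on $m$ enter), so that part of your plan would need to be either executed from scratch or replaced by the paper's existing bounds.
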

The proof is given in Section \ref{sec:thm:counter}

\subsection{Optimal Distributed Methods}\label{sec:methods}
In this paper we consider two methods, for which optimal frequentist performance were derived in context of the Gaussian white noise setting, see \cite{szabo:vzanten:19}. We investigate these methods here in the practically more relevant and technically substantially more complex nonparametric regression model. We note that in \cite{guhaniyogi2017divide} in context of the regression model an approach closely related to Method II was derived and its contraction properties were investigated for a rescaled covariance kernel with polynomially decaying eigenvalues. In our work we consider more general kernel structures and in contrast to  \cite{guhaniyogi2017divide} do not require that the functional parameter belongs to the Reproducing Kernel Hilbert Space (RKHS) of the Gaussian Process prior. Furthermore, we also derive guarantees and limitations to uncertainty quantification. Therefore, our results are of different nature requiring a different approach.

\subsubsection{Method I}

\paragraph{Rescaling the priors.} In the first method, introduced by \cite{scott:blocker:et.al:16} in a parametric setting, we consider raising the prior density to the power $1/m$, which is formally equivalent to multiplying the kernel $K$ by $m$, i.e. the adjusted kernel takes the form  $K^I:=mK$. Then the eigenvalues of the kernel $K^I$ are $\{\mu_j^I\}_{j\in\mathbb{N}^d}=\{m\mu_j\}_{j\in\mathbb{N}^d}$. 
Hence, in view of (\ref{eq: nonpara reg dist}) the posterior distribution, for each machine $k=1,...,m$, is also a Gaussian process $f|\mathbb{D}_{n}^{(k)}\sim$GP$(\hat{f}_{n}^{(k)},\hat{C}_{n}^{(k)})$ with
\begin{align*}
&\hat{f}_{n}^{(k)}(x)=K(x,\mathbb{X}^{(k)}) [K(\mathbb{X}^{(k)},\mathbb{X}^{(k)})+m^{-1}\sigma^2I_{n}]^{-1}\mathbb{Y}^{(k)},\\
&\hat{C}_{n}^{(k)}(x,x')=m\Big(K(x,x')-{K}(x,\mathbb{X}^{(k)})[{K}(\mathbb{X}^{(k)},\mathbb{X}^{(k)})+m^{-1}\sigma^2 I_{n}]^{-1}{K}(\mathbb{X}^{(k)},x')\Big).
\end{align*}

\paragraph{Averaging the local draws.}
A draw from the global posterior is generated by first drawing a single sample from each local posteriors and then taking the averages of these draws over all machines. Since the data sets and the priors in the local machines are independent, the so generated average of the local posteriors is also a Gaussian process with mean $\hat{f}_{n,m}^{I}=m^{-1}\sum_{k=1}^m\hat{f}_{n}^{(k)}$ and covariance kernel $\hat{C}_{n,m}^{I}=m^{-2}\sum_{k=1}^m\hat{C}_{n}^{(k)}$, where $\hat{f}_{n}^{(k)}$ and $\hat{C}_{n}^{(k)}$ denote the posterior mean and covariance functions in the $k$th local machine, respectively. 

\subsubsection{Method II}

\paragraph{Rescaling the likelihood.}
In the second method proposed by \cite{srivastava:2015a}, we adjust the local likelihood by raising its power to $m$ in every machine, which is equivalent to rescaling the variance of the observations by a factor $m^{-1}$. Then, by elementary computations similar to \eqref{eq: postmean} and \eqref{eq: postcov}, we obtain that for each machine, the posterior distribution is $GP(\hat{f}_{n}^{(k)},\hat{C}_{n}^{(k)})$, with
\begin{align*}
&\hat{f}_{n}^{(k)}(x)=K(x,\mathbb{X}^{(k)})[K(\mathbb{X}^{(k)},\mathbb{X}^{(k)})+m^{-1}\sigma^2I_{n}]^{-1}\mathbb{Y}^{(k)},\\
&\hat{C}_{n}^{(k)}(x,x')=K(x,x')-K(x,\mathbb{X}^{(k)})[K(\mathbb{X}^{(k)},\mathbb{X}^{(k)})+m^{-1}\sigma^2 I_{n}]^{-1}K(\mathbb{X}^{(k)},x').
\end{align*}

\paragraph{Wasserstein barycenter.}
This approach consists in aggregating the local posteriors by computing their Wasserstein barycenter. The 2-Wasserstein distance $W_2^2(\mu,\nu)$ between two probability measures $\mu$ and $\nu$ is defined as
$$W_2^2(\mu,\nu):=\inf_{\gamma}\int\int\|x-y\|_2^2\gamma(dx,dy),$$
where the infimum is taken over all measures $\gamma$ with marginals $\mu$ and $\nu$. The corresponding 2-Wasserstein barycenter of $m$ probability measures $\mu_1,...,\mu_m$ is defined by
$$\bar{\mu}=\arg\min_{\mu}\frac{1}{m}\sum_{k=1}^mW_2^2(\mu,\mu_k),$$
where the minimum is taken over all probability measures with finite second moments. In view of Theorem 4 in \cite{mallasto:feragen:17}, the global posterior is a Gaussian process with mean $\hat{f}_{n,m}^{II}$ and covariance $\hat{C}_{n,m}^{II}$ satisfying
\begin{align*}
\hat{f}_{n,m}^{II}&=\frac{1}{m}\sum_{k=1}^m\hat{f}_{n}^{(k)},\\
\hat{C}_{n,m}^{II}&=\frac{1}{m}\sum_{k=1}^m\big((\hat{C}_{n,m}^{II})^{1/2}\hat{C}_{n}^{(k)}(\hat{C}_{n,m}^{II})^{1/2}\big)^{1/2}.
\end{align*}
In particular, the posterior variance function is
$$\Var_{n,m}^{II}[f(x)|\mathbb{D}_N]=\frac{1}{m}\sum_{k=1}^m\Var[f(x)|\mathbb{D}^{(k)}_{n}]$$
for all $x\in\mathcal{X}$.

\subsection{Posterior contraction rate}
We show that the above proposed distributed methods (i.e. Methods I- II) provide optimal recovery of the underlying functional parameter of interest. The methods result in different global posteriors which can have different finite sample size behavior, but their asymptotic properties are similar.

\begin{theorem}\label{thm: contr:general}
Let $\beta,B>0$, $K$ a kernel with eigenvalues $(\mu_j)_{j\in\mathbb{N}^d}$ satisfying $|\{j\in \mathbb{N}^{d}:\,  \mu_j N \geq \sigma^2\}|\leq N$ and corresponding eigenfunctions satisfying Assumption \ref{ass: bound eigenf}. Furthermore, let
\begin{align}\label{eq: eigenvalues equII}
\nu_j=\frac{\mu_jN}{\sigma^2+\mu_jN},\quad \text{for all $j\in\mathbb{N}^d$},
\end{align}
and $\tilde{P}$ a linear operator defined as $\tilde{P}(f):=\sum_{j\in\mathbb{N}^d}(1-\nu_j)f_j\psi_j$ for all $f\in L^2(\mathcal{X})$. Then
\begin{align}
\label{eq: bound post mean}
E_0\|\hat{f}_{n,m}-f_0\|_2^2\lesssim \|\tilde{P}(f_0)\|_2^2+\frac{\sigma^2}{N}\sum_{j\in\mathbb{N}^d}\nu_j^2+ \delta_{N},\\
\label{eq: contr rate}
E_0\Pi^{\dagger}_{n,m}\Bigg(\|f-f_0\|_2^2>M_N\Big(\|\tilde{P}(f_0)\|_2^2+\frac{\sigma^2}{N}\sum_{j\in\mathbb{N}^d}\nu_j+\delta_{N}\Big)|\mathbb{D}_N\Bigg)\to 0,
\end{align}
for arbitrary sequence $M_N$ tending to infinity, where $\hat{f}_{n,m}$ is the mean of the global posterior $\Pi^{\dagger}_{n,m}(.|\mathbb{D}_N)$ obtained with either Methods $I-II$ and 
\begin{align}
\delta_{N}=\inf\Big\{ N\sum_{j\in\mathbb{N}^d}\nu_j^2 \sum_{\ell\in \mathcal{I}^c}\mu_\ell:\, \mathcal{I}\subset \mathbb{N}^d, |\mathcal{I}|\leq n\big(m\sum_{j\in\mathbb{N}^d}\nu_j^2\big)^{-1}\Big\}\label{def:deltaN}
\end{align}
is a (typically) negligible technical term.
\end{theorem}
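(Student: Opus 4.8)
The plan is to compare the common global posterior mean of Methods I--II, and the aggregated posterior covariance, to explicit ``diagonalized'' counterparts, and to control the discrepancy through a concentration bound for the local empirical covariance operators. Let $M=\diag(\mu_j)_{j\in\mathbb{N}^d}$, $\psi(x)=(\psi_j(x))_{j\in\mathbb{N}^d}$, and let $\Psi^{(k)}$ be the $n\times|\mathbb{N}^d|$ array with entries $\psi_j(X_i^{(k)})$. By the push-through (Woodbury) identity, for both methods $\hat f_n^{(k)}(x)=\psi(x)^\top M^{1/2}[M^{1/2}(\Psi^{(k)})^\top\Psi^{(k)}M^{1/2}+m^{-1}\sigma^2 I]^{-1}M^{1/2}(\Psi^{(k)})^\top\mathbb Y^{(k)}$, and $\hat f_{n,m}=\frac1m\sum_{k=1}^m\hat f_n^{(k)}$. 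Since the $\psi_j$ are orthonormal in $L^2(\mathcal X)$, $E_0[\frac1n(\Psi^{(k)})^\top\Psi^{(k)}]=I$, so replacing $(\Psi^{(k)})^\top\Psi^{(k)}$ by $nI$ and using $nm=N$ and $nM^{1/2}[nM+m^{-1}\sigma^2I]^{-1}M^{1/2}=\diag(\nu_j)$, the natural oracle is $\tilde f_{n,m}(x)=\sum_{j\in\mathbb{N}^d}\nu_j\hat Z_j\psi_j(x)$ with $\hat Z_j=\frac1N\sum_{k=1}^m\sum_{i=1}^n\psi_j(X_i^{(k)})Y_i^{(k)}$. One checks $E_0\hat Z_j=f_{0,j}$ and $\Var_0(\hat Z_j)\lesssim\sigma^2/N$ (using Assumption \ref{ass: bound eigenf}, $\sigma^2\gtrsim1$ and $\|f_0\|_2<\infty$), so orthonormality of the $\psi_j$ gives $E_0\|\tilde f_{n,m}-f_0\|_2^2\le\|\tilde P(f_0)\|_2^2+\frac{\sigma^2}{N}\sum_{j\in\mathbb{N}^d}\nu_j^2$. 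Thus \eqref{eq: bound post mean} reduces to proving $E_0\|\hat f_{n,m}-\tilde f_{n,m}\|_2^2\lesssim\delta_N$.

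\textbf{Step 2 (the discrepancy and the term $\delta_N$; the posterior trace).} Write $\Delta^{(k)}=\frac1n(\Psi^{(k)})^\top\Psi^{(k)}-I$. A resolvent identity expresses the $k$-th summand $\hat f_n^{(k)}-\tilde f_n^{(k)}$ of $\hat f_{n,m}-\tilde f_{n,m}$ linearly in $M^{1/2}\Delta^{(k)}M^{1/2}$, sandwiched between the empirical resolvent and $[nM+m^{-1}\sigma^2 I]^{-1}$; since $\int(\psi(x)^\top v)^2\,dx=\|v\|_2^2$, its $L^2$-norm is the $\ell^2$-norm of the corresponding coefficient vector. Fix a finite $\mathcal I\subset\mathbb{N}^d$ with $|\mathcal I|\le n(m\sum_j\nu_j^2)^{-1}$. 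By the matrix Bernstein inequality, using that $\psi_{\mathcal I}(X_i^{(k)})\psi_{\mathcal I}(X_i^{(k)})^\top$ has operator norm $\le C_\psi^2|\mathcal I|$ (Assumption \ref{ass: bound eigenf}), the restricted block satisfies $\|\Delta^{(k)}_{\mathcal I\mathcal I}\|_{\mathrm{op}}\lesssim\sqrt{|\mathcal I|\log n/n}=o(1)$ with $P_0$-probability $1-o(n^{-1})$; the coordinates in $\mathcal I^c$ are controlled deterministically through the smallness of $\mu_\ell$, $\ell\in\mathcal I^c$, contributing a term bounded by a multiple of $N(\sum_j\nu_j^2)\sum_{\ell\in\mathcal I^c}\mu_\ell$. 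Splitting $\mathbb Y^{(k)}=f_0(\mathbb X^{(k)})+\varepsilon^{(k)}$, the part driven by $\varepsilon^{(k)}$ is centred given the designs and independent across $k$, so its average over machines gains a factor $1/m$, while the part driven by $f_0$ produces a deterministic smoothing error dominated by $\|\tilde P(f_0)\|_2^2+N(\sum_j\nu_j^2)\sum_{\ell\in\mathcal I^c}\mu_\ell$. Taking the infimum over admissible $\mathcal I$ gives $E_0\|\hat f_{n,m}-\tilde f_{n,m}\|_2^2\lesssim\delta_N$, proving \eqref{eq: bound post mean}; here the hypothesis $|\{j:\mu_jN\ge\sigma^2\}|\le N$ guarantees the effective dimension $\sum_j\nu_j\lesssim N$, so that a set $\mathcal I$ of the stated size exists and the Bernstein bound is effective. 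The trace of the aggregated covariance is handled by the same expansion: for both methods $\operatorname{tr}\hat C_{n,m}$ equals the trace $\tau_n$ of the local kernel-$K$, noise-$m^{-1}\sigma^2$ posterior covariance --- as follows from the $m^{-2}\sum_k$ form of $\hat C_{n,m}^I$ (each $\hat C_n^{(k)}$ carrying a factor $m$) and from $\hat C_{n,m}^{II}(x,x)=\frac1m\sum_k\Var[f(x)|\mathbb D_n^{(k)}]$ --- and the $\Delta^{(k)}$-expansion yields $E_0\operatorname{tr}\hat C_{n,m}\lesssim\frac{\sigma^2}{N}\sum_{j\in\mathbb{N}^d}\nu_j+\delta_N$.

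\textbf{Step 3 (the contraction statement).} For a draw $f$ from $\Pi^\dagger_{n,m}(\cdot|\mathbb D_N)$, $\|f-f_0\|_2^2\le2\|f-\hat f_{n,m}\|_2^2+2\|\hat f_{n,m}-f_0\|_2^2$. Under $\Pi^\dagger_{n,m}$ the law of $f-\hat f_{n,m}$ is a centred Gaussian with covariance operator $\hat C_{n,m}$, whence $E[\|f-\hat f_{n,m}\|_2^2|\mathbb D_N]=\operatorname{tr}\hat C_{n,m}$; applying Markov's inequality first in the posterior and then in $P_0$ (using the trace bound of Step 2) shows that $\|f-\hat f_{n,m}\|_2^2\le M_N^{1/2}(\frac{\sigma^2}{N}\sum_j\nu_j+\delta_N)$ on an event of $\Pi^\dagger_{n,m}$- and $P_0$-probability tending to $1$. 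Likewise, \eqref{eq: bound post mean} and Markov give $\|\hat f_{n,m}-f_0\|_2^2\lesssim M_N^{1/2}(\|\tilde P(f_0)\|_2^2+\frac{\sigma^2}{N}\sum_j\nu_j^2+\delta_N)$ with $P_0$-probability $\to1$. Combining the two and relabelling $M_N$ (which is an arbitrary sequence tending to infinity) yields \eqref{eq: contr rate}.

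\textbf{Main obstacle.} The crux is Step 2. The $L^2$-norm of $\hat f_{n,m}-\tilde f_{n,m}$, and the trace correction for $\hat C_{n,m}$, couple \emph{all} frequencies, whereas $\frac1n(\Psi^{(k)})^\top\Psi^{(k)}$ concentrates around the identity only on a subspace of dimension $o(n)$. One must therefore balance the truncation level $|\mathcal I|$ against the eigenvalue tail $\sum_{\ell\in\mathcal I^c}\mu_\ell$ --- precisely the trade-off producing $\delta_N$ --- while keeping the resolvent manipulations uniform, and the signal/noise split of the discrepancy must be arranged so that the error does not pick up a factor of the (possibly large) number of machines $m$.
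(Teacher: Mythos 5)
Your plan is correct in its architecture and produces the same bias--variance--truncation decomposition as the paper, but it is phrased in a genuinely different language. The paper never passes to the feature-space/Woodbury representation: it characterizes each local posterior mean as the zero of a KRR score function $\hat{S}_n^{(k)}$, compares it to the \emph{deterministic} target $\tilde{F}(f_0)=\sum_j\nu_jf_{0,j}\psi_j$ (the zero of the population score), and controls the discrepancy by a fixed-point/absorption argument: it shows $E_0\|\Delta\hat f_n^{(k)}-\tilde F\circ F^{-1}\circ\hat S_n^{(k)}(\tilde F(f_0))\|_2^2\lesssim m^{-1}E_0\|\Delta\hat f_n^{(k)}\|_2^2+\delta_N$ and absorbs the first term, using entry-wise Hoeffding bounds on $\frac1n\sum_i\psi_j(X_i)\psi_\ell(X_i)-\delta_{j\ell}$ for $\ell\in\mathcal I$ (its Lemma \ref{lem:bound:random}) rather than matrix Bernstein in operator norm. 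Your random oracle $\sum_j\nu_j\hat Z_j\psi_j$ is the sum of the paper's $\tilde F(f_0)$ and its score fluctuation $\tilde F\circ F^{-1}\circ\hat S_n^{(k)}(\tilde F(f_0))$, so Step 1 is an equivalent and arguably cleaner packaging of the bias and variance terms; the resolvent identity you invoke is the matrix form of the paper's score-difference identity, and the $|\mathcal I|$ versus $\sum_{\ell\in\mathcal I^c}\mu_\ell$ trade-off producing $\delta_N$ is identical. Your Step 3 (Markov in the posterior and then in $P_0$) matches the paper's one-shot Markov argument.

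Two ingredients that you assert but do not supply deserve flagging, since they carry real content. First, "the coordinates in $\mathcal I^c$ are controlled deterministically through the smallness of $\mu_\ell$" requires an a priori bound on the tail coordinates of the \emph{data-dependent} estimator, which the paper obtains through $E_0\|\Delta\hat f_n^{(k)}\|_{\mathcal H}^2\lesssim N$ (Lemma \ref{lem:largeJ}, using the variational characterization of KRR) together with a truncation to a high-probability event (Lemma \ref{lem:help:bounded}); without some such RKHS-norm control the $\mathcal I^c$ block cannot be bounded by $\sum_{\ell\in\mathcal I^c}\mu_\ell$ alone. Second, your claim that the $\Delta^{(k)}$-expansion "yields" $E_0\operatorname{tr}\hat C_{n,m}\lesssim\frac{\sigma^2}{N}\sum_j\nu_j+\delta_N$ is the one step where you replace a nontrivial imported result by an assertion: the paper obtains the matching upper \emph{and} lower bounds on the expected trace (the learning curve) from the variational bounds of Opper--Vivarelli and Opper--Trecate--Williams (Lemma \ref{lem: bound exp post var}), and note that the lower bound is also needed elsewhere (for the credible-set radius), so deriving only an upper bound from your expansion would not fully substitute for that lemma. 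Neither point is a wrong turn, but both require proofs of their own before Step 2 closes.
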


The proof of the theorem is deferred to Section \ref{sec: contr:general}.

First we note that the condition $|\{j\in \mathbb{N}^{d}:\,  \mu_j N \geq \sigma^2\}|\leq N$ is very mild and is satisfied by the eigenvalues considered in Assumption \ref{ass: bound eigenvalues}. The sequence $(\nu_j)_{j\in\mathbb{N}}$ can be thought of as the population eigenvalues of the posterior. Next note that the bound \eqref{eq: bound post mean} has two main components. The first term $\|\tilde{P}(f_0)\|_2^2$ measures how close $f_0$ is (in $L_2$-norm) to its convolution with the eigenvalues $(\nu_j)_{j\in\mathbb{N}^d}$, hence it accounts for the bias of the estimator. In the meanwhile the second term $(\sigma^2/N)\sum_{j\in\mathbb{N}^d}\nu_j^2$ can be thought of as the variance term. In a similar fashion, the contraction rate \eqref{eq: contr rate} has also two main components: $\|\tilde{P}(f_0)\|_2^2$ and $(\sigma^2/N)\sum_{j\in\mathbb{N}^d}\nu_j$, where the former is the squared bias while the latter is the expected value of the posterior variance under the true parameter. The remaining $\delta_{N}$ term is of technical nature. It bounds the tail behaviour of the eigen-decomposition of the variance of the posterior mean. This term is shown to be negligible in our examples. Since all the above terms are related to the kernel $K$, explicit bounds on the expectation of $\|\hat{\theta}_n-\theta_0\|_2$, as well as explicit posterior contraction rates of the global posterior $\Pi^{\dagger}_{n,m}(.|\mathbb{D}_N)$, can be achieved for specific choices of the kernels.


\begin{corollary}\label{cor: contraction:poly}\textbf{(Polynomial)}
For given $B>0$ and $\beta\geq 3d/2$, assume that the covariance kernel $K$ satisfies Assumptions \ref{ass: bound eigenf} and \eqref{assump:poly} with $\alpha=\beta$. Then for $m=o(N^{\frac{2\beta-3d}{4\beta}})$ 
the aggregated posterior distribution $\Pi^{\dagger}_{n,m}(.|\mathbb{D}_N)$ and the corresponding aggregated posterior mean $\hat{f}_{n,m}$ resulting from either of the Methods $I-II$ achieve the minimax convergence rate up to a logarithmic factor, i.e.
$$\sup_{f_0\in\Theta^{\beta}(B)}E_0\|\hat{f}_{n,m}-f_0\|_2^2\lesssim(N/\sigma^2)^{-2\beta/(2\beta+d)}\log^{d-1}(N/\sigma^2)$$
and for all sequences $M_N\to +\infty$,
$$\sup_{f_0\in\Theta^{\beta}(B)}E_0\Pi^{\dagger}_{n,m}(f:\|f-f_0\|_2>M_N(N/\sigma^2)^{-\beta/(2\beta+d)}(\log(N/\sigma^2))^{(d-1)/2}|\mathbb{D}_N)\to 0.$$
\end{corollary}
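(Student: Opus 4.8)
The plan is to invoke Theorem~\ref{thm: contr:general} with the polynomially decaying eigenvalues \eqref{assump:poly} (taking $\alpha=\beta$) and to bound separately each of the three quantities appearing in \eqref{eq: bound post mean}--\eqref{eq: contr rate}: the bias term $\|\tilde{P}(f_0)\|_2^2$, the variance terms $(\sigma^2/N)\sum_j\nu_j^2$ and $(\sigma^2/N)\sum_j\nu_j$, and the technical remainder $\delta_N$ from \eqref{def:deltaN}, showing that all of them are at most of order $(N/\sigma^2)^{-2\beta/(2\beta+d)}\log^{d-1}(N/\sigma^2)$ uniformly in $f_0\in\Theta^\beta(B)$. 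First one checks the hypothesis of Theorem~\ref{thm: contr:general}: since $\mu_j\asymp\prod_{k=1}^d j_k^{-1-2\beta/d}$, the set $\{j\in\mathbb{N}^d:\mu_jN\geq\sigma^2\}$ consists of the lattice points with $\prod_k j_k\lesssim(N/\sigma^2)^{d/(2\beta+d)}$, whose cardinality --- a $d$-fold divisor-type count --- is of order $(N/\sigma^2)^{d/(2\beta+d)}\log^{d-1}(N/\sigma^2)$, hence $\leq N$.

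\textbf{Variance term.} Since $\nu_j=\mu_jN/(\sigma^2+\mu_jN)\asymp\min\{1,\mu_jN/\sigma^2\}$, both $\sum_j\nu_j$ and $\sum_j\nu_j^2$ are, up to constants, equal to the number of indices with $\mu_jN\gtrsim\sigma^2$ plus a convergent tail over the remaining indices; both pieces are of the same order, so $\sum_j\nu_j\asymp\sum_j\nu_j^2\asymp(N/\sigma^2)^{d/(2\beta+d)}\log^{d-1}(N/\sigma^2)$, whence $(\sigma^2/N)\sum_j\nu_j^2$ and $(\sigma^2/N)\sum_j\nu_j$ are both $\asymp(N/\sigma^2)^{-2\beta/(2\beta+d)}\log^{d-1}(N/\sigma^2)$. \textbf{Bias term.} Writing $1-\nu_j=\sigma^2/(\sigma^2+\mu_jN)$ and using $f_0\in\Theta^\beta(B)$,
\[
\|\tilde{P}(f_0)\|_2^2=\sum_{j\in\mathbb{N}^d}(1-\nu_j)^2 f_{0,j}^2\leq B^2\sup_{j\in\mathbb{N}^d}\frac{(1-\nu_j)^2}{\big(\sum_{i=1}^d j_i^2\big)^\beta},
\]
and a standard trade-off argument (the supremum being attained near the threshold $\mu_jN\asymp\sigma^2$, where one may arrange $\sum_i j_i^2\gtrsim(N/\sigma^2)^{2/(2\beta+d)}$) bounds the right-hand side by a multiple of $(N/\sigma^2)^{-2\beta/(2\beta+d)}$.

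\textbf{The remainder $\delta_N$ --- the main obstacle.} Here one chooses $\mathcal{I}\subset\mathbb{N}^d$ in \eqref{def:deltaN} to consist of the indices of the $L:=\lfloor n(m\sum_j\nu_j^2)^{-1}\rfloor$ largest eigenvalues, i.e.\ essentially $\prod_k\ell_k$ below a suitable threshold; for the polynomial profile the resulting tail obeys $\sum_{\ell\in\mathcal{I}^c}\mu_\ell\lesssim L^{-2\beta/d}(\log L)^{(d-1)(2\beta+d)/d}$, and one must verify that this choice respects the constraint $|\mathcal{I}|\leq n(m\sum_j\nu_j^2)^{-1}$, which it does as soon as $L\geq 1$. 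Substituting $\sum_j\nu_j^2\asymp(N/\sigma^2)^{d/(2\beta+d)}\log^{d-1}(N/\sigma^2)$ and $n=N/m$, so that $L\asymp(N/\sigma^2)^{2\beta/(2\beta+d)}m^{-2}$ up to logarithms, and collecting powers of $N$ and $m$, one finds $\delta_N$ scaling like $N^{2(d-\beta)/d}m^{4\beta/d}$ up to logarithmic factors, which is negligible compared with $(N/\sigma^2)^{-2\beta/(2\beta+d)}$ provided $m$ does not grow too fast; tracking the constants and the polylogarithmic corrections yields the sufficient condition $m=o(N^{(2\beta-3d)/(4\beta)})$ stated in the corollary, and $\beta\geq 3d/2$ is precisely what makes this range of $m$ nonempty. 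I expect this bookkeeping to be the delicate step: one has to keep careful track of the logarithmic factors generated by the $d$-dimensional product structure, both in the lattice-point counts governing $\sum_j\nu_j$, $\sum_j\nu_j^2$ and in the tail sums $\sum_{\ell\in\mathcal{I}^c}\mu_\ell$, and check that the admissible $\mathcal{I}$ is large enough.

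\textbf{Conclusion.} Plugging the three estimates into \eqref{eq: bound post mean} gives the stated bound on $E_0\|\hat{f}_{n,m}-f_0\|_2^2$; and since the bias and the $\delta_N$ term are $\lesssim$ the variance term $(\sigma^2/N)\sum_j\nu_j\asymp(N/\sigma^2)^{-2\beta/(2\beta+d)}\log^{d-1}(N/\sigma^2)$, relation \eqref{eq: contr rate} with any $M_N\to\infty$ yields, after taking square roots, the contraction statement. All the bounds depend on $f_0$ only through $B$, so the suprema over $\Theta^\beta(B)$ require no extra work.
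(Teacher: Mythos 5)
Your proposal is correct and follows essentially the same route as the paper: it invokes Theorem~\ref{thm: contr:general}, bounds $\sum_j\nu_j$ and $\sum_j\nu_j^2$ by the hyperbolic-cross lattice count $J_\beta\log^{d-1}J_\beta$ (the paper's Lemmas~\ref{lem: bound eigen poly} and \ref{lem:card}), controls the bias through the Sobolev weights via an AM--GM comparison of $\sum_i j_i^2$ with $\prod_i j_i$ (the paper's Lemmas~\ref{lem: bound-maxfunc}--\ref{lem: bound-minfunc}, organized there as a split at the threshold set rather than a single supremum), and bounds $\delta_N$ by taking $\mathcal{I}$ to be a product-threshold set of cardinality $\asymp n(m\sum_j\nu_j^2)^{-1}$, arriving at the same $N^{2-2\beta/d}m^{4\beta/d}$ (up to polylogarithms) scaling and the same condition $m=o(N^{(2\beta-3d)/(4\beta)})$. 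The only differences are cosmetic (supremum versus split for the bias; ``$L$ largest eigenvalues'' versus the paper's explicit $\log^{d-1}$-deflated product threshold for $\mathcal{I}$), and your log-factor bookkeeping matches the paper's.
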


The proof is given in Section \ref{sec: proof_contraction:poly}.

\begin{corollary}\label{cor: contraction:exp}\textbf{(Exponential)}
For given $B>0$ and $\beta\geq d/2$ assume that the covariance kernel $K$ satisfies Assumptions \ref{ass: bound eigenf} and \eqref{assump:exp} with rescaling parameter $a=(\sigma^2/N)^{1/(2\beta+d)}\log(N/\sigma^2)$ and $b=1$. Then for $m=o(N^{\frac{2\beta-d}{2(2\beta+d)}})$ the aggregated posterior distribution $\Pi^{\dagger}_{n,m}(.|\mathbb{D}_N)$ and the corresponding aggregated posterior mean $\hat{f}_{n,m}$ resulting from either of the Methods $I-II$ achieve the minimax convergence rate, i.e.
$$\sup_{f_0\in\Theta^{\beta}(B)}E_0\|\hat{f}_{n,m}-f_0\|_2^2\lesssim(N/\sigma^2)^{-2\beta/(2\beta+d)},$$
and for all sequences $M_N\to +\infty$,
$$\sup_{f_0\in\Theta^{\beta}(B)}E_0\Pi^{\dagger}_{n,m}\left(f:\|f-f_0\|_2>M_N(N/\sigma^2)^{-\beta/(2\beta+d)}|\mathbb{D}_N\right)\to 0.$$
\end{corollary}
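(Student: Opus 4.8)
The plan is to deduce the corollary directly from Theorem~\ref{thm: contr:general} by instantiating, for the rescaled exponential kernel, the three quantities on the right-hand sides of \eqref{eq: bound post mean}--\eqref{eq: contr rate} and showing that each of them is $\lesssim(N/\sigma^2)^{-2\beta/(2\beta+d)}$, uniformly over $f_0\in\Theta^\beta(B)$. Throughout write $\rho=N/\sigma^2$. First I would set up the bookkeeping: by \eqref{assump:exp} with $b=1$ and the product structure \eqref{eq: eigenvalues}, $\mu_j\asymp e^{-a|j|_1}$ with $|j|_1:=\sum_{k=1}^d j_k\geq d\geq1$, so that $\{j:\mu_jN\geq\sigma^2\}\subseteq\{j:|j|_1\lesssim L\}$ where $L:=a^{-1}\log\rho\asymp\rho^{1/(2\beta+d)}$; since this set has $\asymp L^d=\rho^{d/(2\beta+d)}\leq N$ elements for $N$ large, the cardinality hypothesis of Theorem~\ref{thm: contr:general} holds, and Assumption~\ref{ass: bound eigenf} is assumed. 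It then remains to bound the bias $\|\tilde P(f_0)\|_2^2$, the variance proxies $(\sigma^2/N)\sum_j\nu_j^2\leq(\sigma^2/N)\sum_j\nu_j$, and the technical term $\delta_N$ of \eqref{def:deltaN}, with $\nu_j$ as in \eqref{eq: eigenvalues equII}.

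For the variance term I would use $\nu_j\leq\min\{1,\mu_j\rho\}$ together with $|\{j\in\mathbb N^d:|j|_1=\ell\}|\asymp\ell^{d-1}$: splitting the sum at $|j|_1=L$ gives $\sum_j\nu_j\lesssim L^d+\rho\sum_{\ell>L}\ell^{d-1}e^{-a\ell}$, and the tail is a geometric-type series controlled via $e^{-aL}=\rho^{-1}$, $1-e^{-a}\asymp a$ and $a^{-1}=L/\log\rho\ll L$, yielding $\sum_j\nu_j\lesssim L^d$, hence $(\sigma^2/N)\sum_j\nu_j\lesssim\rho^{-1}L^d=\rho^{-2\beta/(2\beta+d)}$ and a fortiori the same bound for $(\sigma^2/N)\sum_j\nu_j^2$. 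Unlike the polynomial case of Corollary~\ref{cor: contraction:poly}, no logarithmic factor appears, because the relevant index set $\{|j|_1\leq L\}$ is a simplex in $\mathbb N^d$. For $\delta_N$ I would take $\mathcal I=\{j\in\mathbb N^d:|j|_1\leq\Lambda\}$ with $\Lambda$ maximal subject to $\Lambda^d\lesssim n\big(m\sum_j\nu_j^2\big)^{-1}\asymp N/(m^2L^d)$ (using $\sum_j\nu_j^2\asymp L^d$); the hypothesis $m=o(N^{(2\beta-d)/(2(2\beta+d))})$ then forces $\Lambda^d/L^d\asymp\rho^{(2\beta-d)/(2\beta+d)}/m^2\to\infty$, so $\Lambda/L\to\infty$ and $\sum_{\ell\in\mathcal I^c}\mu_\ell\asymp a^{-1}\Lambda^{d-1}e^{-a\Lambda}=a^{-1}\Lambda^{d-1}\rho^{-\Lambda/L}$ is super-polynomially small in $\rho$, while $N\sum_j\nu_j^2\asymp NL^d$ is only polynomial; hence $\delta_N\to0$ and in particular $\delta_N=o(\rho^{-2\beta/(2\beta+d)})$.

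The delicate step, and the one where the precise choice $a=\rho^{-1/(2\beta+d)}\log\rho$ is exploited to balance bias against variance, is the bias term. Here I would write $1-\nu_j=(1+\mu_j\rho)^{-1}$ and use $c\,|j|_1^2\leq\sum_{i=1}^d j_i^2\leq|j|_1^2$ together with the definition \eqref{Sobolev func} of $\Theta^\beta(B)$ to obtain
\[
\|\tilde P(f_0)\|_2^2=\sum_{j\in\mathbb N^d}\frac{f_{0,j}^2}{(1+\mu_j\rho)^2}\;\leq\; B^2\sup_{j\in\mathbb N^d}\frac{1}{\big(\sum_{i=1}^d j_i^2\big)^{\beta}(1+\mu_j\rho)^2}\;\lesssim\; B^2\sup_{t\geq 1}\frac{1}{t^{2\beta}(1+e^{-at}\rho)^2},
\]
and then analyse the one-dimensional supremum: for $t\geq L$ it is $\leq t^{-2\beta}\leq L^{-2\beta}$; for $1\leq t\leq L$ one has $1+e^{-at}\rho\asymp e^{-at}\rho$, so it is $\asymp e^{2at}(t^{2\beta}\rho^2)^{-1}$, whose logarithmic derivative $2a-2\beta/t$ is positive once $t>\beta/a$, and since $\beta/a\ll L$ the supremum over $[1,L]$ is attained near $t=L$ with value $\asymp L^{-2\beta}$, dominating the boundary value $\asymp\rho^{-2}$ at $t=1$. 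This gives $\|\tilde P(f_0)\|_2^2\lesssim B^2 L^{-2\beta}=B^2\rho^{-2\beta/(2\beta+d)}$, uniformly over $\Theta^\beta(B)$.

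Finally, substituting the three bounds into \eqref{eq: bound post mean} yields $\sup_{f_0\in\Theta^\beta(B)}E_0\|\hat f_{n,m}-f_0\|_2^2\lesssim\rho^{-2\beta/(2\beta+d)}$, and into \eqref{eq: contr rate}, after relabelling $\sqrt{M_N}$ as $M_N$, yields the stated contraction rate $M_N\rho^{-\beta/(2\beta+d)}$. I expect the main obstacle to be precisely the rigorous control of the bias supremum (pinning down where the maximum of $t\mapsto t^{-2\beta}(1+e^{-at}\rho)^{-2}$ lies and checking it does not migrate to the boundary of $[1,\infty)$), together with the careful $d$-dimensional lattice counting $|\{|j|_1=\ell\}|\asymp\ell^{d-1}$ and the verification that $\delta_N$ is negligible under the stated range of $m$; everything else is a direct substitution into Theorem~\ref{thm: contr:general}.
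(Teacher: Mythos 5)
Your proposal is correct and follows essentially the same route as the paper's proof: instantiate Theorem \ref{thm: contr:general}, show the bias is maximized at frequencies with $\sum_i j_i\asymp a^{-1}\log(N/\sigma^2)$ giving $\|\tilde P(f_0)\|_2^2\lesssim (N/\sigma^2)^{-2\beta/(2\beta+d)}$, bound $\sum_j\nu_j\asymp(a^{-1}\log(N/\sigma^2))^d$ (the content of Lemma \ref{lem: bound eigen expo}, which you reprove by shell counting), and kill $\delta_N$ with the simplex index set $\{\sum_i j_i\leq \Lambda\}$ of maximal admissible size under the stated bound on $m$, exactly as in Section \ref{sec: proof_contraction:exp}. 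The only cosmetic deviations are that you handle the bias with a single H\"older-type supremum and a monotonicity argument for $t\mapsto t^{-2\beta}(1+e^{-at}N/\sigma^2)^{-2}$, where the paper splits into $\mathcal N$ and $\mathcal N^c$ and uses convexity to place the maximum at the endpoints; both yield the same endpoint value $\asymp(a^{-1}\log(N/\sigma^2))^{-2\beta}$.
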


The proof is given in Section \ref{sec: proof_contraction:exp}. We note that the conditions on $\beta$ and $m$ in both corollaries follow from the remaining technical term $\delta_{N}$. These conditions are not optimized and are of technical nature.

\section{Distributed uncertainty quantification}

In the following, we study the frequentist coverage properties of the $L_2$ credible balls defined in \eqref{def:cred:aggr} resulting from Method I. For convenience we allow some additional flexibility by allowing the credible balls to be blown up by a constant factor $L>0$, i.e. we consider balls
\begin{align*}
\hat{B}_{n,m,\gamma}(L)=\{f\in L_2(\mathcal{X}):\, \|f-\hat{f}_{n,m}\|_2\leq L r_{n,m,\gamma}\},
\end{align*}
where for the choice $L=1$ we get back our original credible ball \eqref{def:cred:aggr}. The frequentist validity of $\hat{B}_{n,m,\gamma}(L)$ will be established in two steps: first we approximate the centered posterior measure $f-\hat{f}_{n,m}|\mathbb{D}_N$ and second we study the asymptotic behavior of the radius, and the bias and variance of the posterior mean, corresponding to the approximated posterior.

In the non-distributed case (i.e. $m=1$), the posterior distribution can be approximated by an auxiliary Gaussian process. For the GP posterior $f-\hat{f}_N|\mathbb{D}_N\sim$GP$(0,\hat{C}_N)$, the covariance kernel $\hat{C}_N$ given in \eqref{eq: postcov} is hard to analyze due to its dependence  on $\mathbb{X}$. Against this background, following the idea in \cite{bhattacharya:pati:yun:17}, we define a population level GP $\hat{W}\sim$GP$(0,\tilde{C}_n)$, where $\tilde{C}_N(x,x')=\sigma^2/N\sum_{j\in\mathbb{N}^d}\nu_j\boldsymbol{\psi}_j(x)\boldsymbol{\psi}_j(x')$, and show that the two kernels are close with respect to the $L_2$-norm. Then using this result we can provide the following frequentist coverage results for the credible balls.

\begin{theorem}\label{th: unc quant}
Let $\beta,B>0$, $K$ a kernel with eigenvalues $(\mu_j)_{j\in\mathbb{N}^d}$ satisfying $|\{j\in \mathbb{N}^{d}:\,  \mu_j N \geq \sigma^2\}|\leq N$ and corresponding eigenfunctions satisfying Assumption \ref{ass: bound eigenf}. Furthermore, assume that $N\delta_{N}/\sum_{j\in\mathbb{N}^d}\nu_j=o(1)$, where the (typically) negligible technical term $\delta_N$ was defined in \eqref{def:deltaN}. Then in case the bias term $\|\tilde{P}(f_0)\|_2$ satisfies that
\begin{align}\label{cond:biasUB}
\frac{N}{\sigma^2}\frac{\|\tilde{P}(f_0)\|_2^2}{\sum_{j\in\mathbb{N}^d}\nu_j}\leq c
\end{align}
for some $c\geq0$, the frequentist coverage of the (inflated) credible set resulting from Method I tends to one, i.e. for arbitrary $L_n\rightarrow \infty$
$$P_0(f_0\in\hat{B}_{n,m,\gamma}(L_n))\stackrel{n\to \infty}{\to}  1.$$
On the other hand, if the bias term $\|\tilde{P}(f_0)\|_2$ satisfies that
\begin{align}\label{cond:biasLB}
\frac{N}{\sigma^2}\frac{\|\tilde{P}(f_0)\|_2^2}{\sum_{j\in\mathbb{N}^d}\nu_j}\stackrel{n\to \infty}{\to} \infty,
\end{align}
 then the aggregated and inflated credible set resulting from Method I has frequentist coverage tending to zero, i.e. for any $L>0$,
$$P_0(f_0\in\hat{B}_{n,m,\gamma}(L))\stackrel{n\to \infty}{\to} 0.$$
\end{theorem}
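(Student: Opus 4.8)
The plan is to reduce, exactly as in the non-distributed case, the data-dependent aggregated posterior $f-\hat f_{n,m}\mid\mathbb{D}_N$ (under Method I) to the population-level Gaussian process $\hat W\sim\mathrm{GP}(0,\tilde C_N)$ with $\tilde C_N(x,x')=\frac{\sigma^2}{N}\sum_{j\in\mathbb{N}^d}\nu_j\psi_j(x)\psi_j(x')$, and then to compare a bias--variance decomposition of $\hat f_{n,m}-f_0$ with the credible radius $r_{n,m,\gamma}$. The structural fact that makes Method I behave like the $m=1$ posterior based on all $N$ observations is that the inner kernel in each local machine is a GP-regression covariance built from $n$ design points and noise variance $m^{-1}\sigma^2$, whose population eigenvalues are exactly $\nu_j=\mu_jN/(\sigma^2+\mu_jN)$; tracking the $m$-rescalings, the per-machine population total variance equals $m\cdot\frac{\sigma^2}{N}\sum_j\nu_j$, and the aggregation $\hat C_{n,m}^{I}=m^{-2}\sum_{k=1}^m\hat C_n^{(k)}$ restores the aggregated population total variance to $V_N:=\frac{\sigma^2}{N}\sum_{j\in\mathbb{N}^d}\nu_j$, the non-distributed level. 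First I would make this quantitative: show that $\hat C_{n,m}^{I}$ is close to $\tilde C_N$ in Hilbert--Schmidt norm via a Bernstein-type concentration bound for the local empirical covariance operators $\tfrac1n K(\mathbb{X}^{(k)},\mathbb{X}^{(k)})$ of the bounded eigenfunctions, combined with a truncation of the eigenexpansion at a finite index set; the optimally chosen truncation error is precisely the quantity defining $\delta_N$ in \eqref{def:deltaN}, and the hypothesis $N\delta_N/\sum_j\nu_j=o(1)$ makes it negligible relative to $V_N$. This, together with Gaussian concentration of $\|\hat W\|_2^2$ around $E\|\hat W\|_2^2=V_N$, yields $r_{n,m,\gamma}^2\asymp V_N$.

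For the first assertion, write $E_0[\hat f_{n,m}\mid\mathbb{X}]$ for the design-conditional mean of the aggregated posterior mean and decompose $\hat f_{n,m}-f_0=(\hat f_{n,m}-E_0[\hat f_{n,m}\mid\mathbb{X}])+(E_0[\hat f_{n,m}\mid\mathbb{X}]-f_0)$. By Theorem~\ref{thm: contr:general}, $E_0\|\hat f_{n,m}-f_0\|_2^2\lesssim\|\tilde P(f_0)\|_2^2+\frac{\sigma^2}{N}\sum_j\nu_j^2+\delta_N$, and under \eqref{cond:biasUB} (together with $\nu_j\le1$ and $N\delta_N/\sum_j\nu_j=o(1)$) this is $\lesssim V_N\asymp r_{n,m,\gamma}^2$. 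Markov's inequality then gives $P_0(\|\hat f_{n,m}-f_0\|_2>L_n r_{n,m,\gamma})\lesssim L_n^{-2}\to0$ for any $L_n\to\infty$, i.e.\ $P_0(f_0\in\hat B_{n,m,\gamma}(L_n))\to1$.

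For the second assertion the bias must be isolated. Conditionally on the design, the fluctuation term $\hat f_{n,m}-E_0[\hat f_{n,m}\mid\mathbb{X}]$ is mean-zero Gaussian with $E_0\|\hat f_{n,m}-E_0[\hat f_{n,m}\mid\mathbb{X}]\|_2^2$ of order $\frac{\sigma^2}{N}\sum_j\nu_j^2\le V_N$ up to the $\delta_N$-correction, hence $O_{P_0}(\sqrt{V_N})=O_{P_0}(r_{n,m,\gamma})$. The crux is the lower bound on the design-dependent bias: I would show $\|E_0[\hat f_{n,m}\mid\mathbb{X}]-f_0\|_2\ge(1-o_{P_0}(1))\|\tilde P(f_0)\|_2$ by subtracting from the exact population bias $-\tilde P(f_0)$ the design-dependent approximation error, which is controlled by the same $\delta_N$-type concentration and is negligible compared to $\|\tilde P(f_0)\|_2$ in the regime \eqref{cond:biasLB} (both $\sqrt{V_N}$ and the $\delta_N$-type error are $o(\|\tilde P(f_0)\|_2)$ there). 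The triangle inequality then gives $\|f_0-\hat f_{n,m}\|_2\ge\|E_0[\hat f_{n,m}\mid\mathbb{X}]-f_0\|_2-O_{P_0}(\sqrt{V_N})\gg L\,r_{n,m,\gamma}$ for every fixed $L>0$, because $\|\tilde P(f_0)\|_2/\sqrt{V_N}\to\infty$ and $r_{n,m,\gamma}\asymp\sqrt{V_N}$; hence $P_0(f_0\in\hat B_{n,m,\gamma}(L))\to0$.

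The main obstacle is the population approximation in the distributed regime: since each machine sees only $n=N/m$ observations, the concentration of the local empirical covariance operators and of the local posterior means around their population versions is much weaker than for $m=1$, and it is precisely this loss that $\delta_N$ quantifies; propagating the resulting $m$-dependent errors through both the covariance kernel (which governs $r_{n,m,\gamma}$) and the bias of the aggregated posterior mean, and checking that they remain negligible under $N\delta_N/\sum_j\nu_j=o(1)$, is the delicate part. A secondary difficulty, specific to the no-coverage direction, is establishing the lower bound $\|E_0[\hat f_{n,m}\mid\mathbb{X}]-f_0\|_2\gtrsim\|\tilde P(f_0)\|_2$, i.e.\ ruling out cancellation between the genuine bias and the design-dependent approximation error.
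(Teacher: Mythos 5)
Your overall architecture is the same as the paper's: compare the credible radius with $V_N:=\frac{\sigma^2}{N}\sum_{j\in\mathbb{N}^d}\nu_j$, control the deviation of the aggregated posterior mean through the KRR machinery behind Theorem \ref{thm: contr:general}, and conclude with Markov and triangle inequalities; your coverage direction is essentially the paper's argument. The genuine gap is in the step ``$\hat{C}^I_{n,m}$ close to $\tilde{C}_N$ in Hilbert--Schmidt norm $\Rightarrow r_{n,m,\gamma}^2\asymp V_N$''. The quantile $r_{n,m,\gamma}$ is governed by the conditional mean $E[\|W_{n,m}\|_2^2\mid\mathbb{X}]=\int_{\mathcal{X}}\hat{C}^I_{n,m}(x,x)dx$ (a trace) and the conditional standard deviation $\sqrt{2}\,\bigl(\int_{\mathcal{X}}\|\hat{C}^I_{n,m}(x,\cdot)\|_2^2dx\bigr)^{1/2}$; Hilbert--Schmidt closeness to $\tilde{C}_N$ does not control the trace (in infinite dimensions a perturbation of tiny HS norm can carry a large trace), so the crucial \emph{lower} bound on $E[\|W_{n,m}\|_2^2\mid\mathbb{X}]$ --- the only place where a lower bound is actually needed --- does not follow from your concentration plan as stated. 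The paper obtains it from a different ingredient: the design-uniform (almost sure) lower bound on the generalization error of Opper--Vivarelli, used in \eqref{eq: hulp1} and Lemma \ref{lem: bound exp post var}, gives $E[\|W_{n,m}\|_2^2\mid\mathbb{X}]\gtrsim V_N$ for every design, and then Chebyshev combined with the upper bound $\Var(\|W_{n,m}\|_2^2\mid\mathbb{X})=2\int_{\mathcal{X}}\|\hat{C}^I_{n,m}(x,\cdot)\|_2^2dx\lesssim \frac{\sigma^4}{N^2}\sum_j\nu_j^2+\tilde\delta_N$ (proved by rerunning the score-function analysis on the covariance KRR problem) shows $r_{n,m,\gamma}^2\gtrsim V_N$ with $P_0$-probability tending to one; the matching upper bound needed for no-coverage is just Markov, $\gamma\, r_{n,m,\gamma}^2\le E[\|W_{n,m}\|_2^2\mid\mathbb{X}]$. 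You need either this external input or a genuinely trace-level (not HS-level) approximation argument; note also that the technical term entering the covariance analysis is $\tilde\delta_N$ (prefactor $(\sum_j\nu_j^2)^2$ rather than $N\sum_j\nu_j^2$), which equals $(\sum_j\nu_j^2/N)\delta_N$ and is then absorbed by the assumed condition, rather than $\delta_N$ itself.

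Your second difficulty --- lower bounding $\|E_0[\hat f_{n,m}\mid\mathbb{X}]-f_0\|_2$ and ruling out cancellation --- is self-inflicted and the paper avoids it entirely. Center at $\tilde{F}(f_0)$ instead of at the design-conditional mean: the bias $\|\tilde{P}(f_0)\|_2=\|f_0-\tilde{F}(f_0)\|_2$ is deterministic, and the bound \eqref{eq: UB:bias} (available from the proof of Theorem \ref{thm: contr:general}) plus Markov gives $\|\hat f_{n,m}-\tilde{F}(f_0)\|_2=o_{P_0}(\|\tilde{P}(f_0)\|_2)$ under \eqref{cond:biasLB} and $N\delta_N/\sum_j\nu_j=o(1)$, since $\frac{\sigma^2}{N}\sum_j\nu_j^2\le V_N\ll\|\tilde{P}(f_0)\|_2^2$. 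The triangle inequality, together with the Markov bound $\gamma\, r_{n,m,\gamma}^2\le E[\|W_{n,m}\|_2^2\mid\mathbb{X}]$ and Lemma \ref{lem: bound exp post var}, then yields $P_0(f_0\in\hat{B}_{n,m,\gamma}(L))\to0$ without any lower bound on a random bias term. With that substitution, and with the radius lower bound repaired as above, your plan coincides with the paper's proof.
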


We briefly discuss the assumptions. Condition \eqref{cond:biasUB} requires that the squared bias term is dominated by the posterior variance, which is a natural and standard assumption for coverage. On the other hand condition \eqref{cond:biasLB} resulting in the lack of coverage assumes that the squared bias dominates the variance which is again natural and standard.  The assumption $N\delta_{N}/\sum_{j\in\mathbb{N}^d}\nu_j=o(1)$ is of technical nature, required to deal with the tail of the eigen-decomposition of the posterior. This condition is not optimized but it is already sufficiently general to cover our examples. The blow up constant of the credible sets are again of technical nature, it can be equivalently replaced by slightly under-smoothing the prior, see \cite{knapik}.

Below we consider specific choices of the covariance kernel $K$,  both with polynomially and exponentially decaying eigenvalues. We show below that by not over-smoothing the priors, Method I results in frequentist coverage tending to one in both examples.

\begin{corollary}\label{cor: coverage:poly}\textbf{(Polynomial)}
For given $B>0$ and $\beta\geq 3d/2$, assume that the covariance kernel $K$ satisfies Assumptions \ref{ass: bound eigenf} and \eqref{assump:poly} with $\alpha\leq\beta$. Then for $m=o(N^{\frac{2\beta-3d}{4\beta}})$
and $L_N$ tending to infinity arbitrarily slowly the aggregated posterior credible set $\hat{B}_{n,m,\gamma}(L_N)$ attains asymptotic frequentist coverage one, i.e.
$$\inf_{f_0\in\Theta^{\beta}(B)}P_0(f_0\in\hat{B}_{n,m,\gamma}(L_N))\to 1.$$
\end{corollary}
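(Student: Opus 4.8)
The plan is to deduce Corollary \ref{cor: coverage:poly} from Theorem \ref{th: unc quant}: one checks, uniformly over $f_0\in\Theta^\beta(B)$, the three hypotheses of that theorem and verifies that it is the upper-bound condition \eqref{cond:biasUB} (and not \eqref{cond:biasLB}) which holds here. Concretely these are (i) the counting bound $|\{j\in\mathbb{N}^d:\mu_jN\geq\sigma^2\}|\leq N$; (ii) the technical condition $N\delta_N/\sum_{j\in\mathbb{N}^d}\nu_j=o(1)$; and (iii) the bias-domination bound \eqref{cond:biasUB} with a fixed constant $c=c(B)$.

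First I would record the eigenvalue asymptotics of the polynomial kernel. Writing $P_j:=\prod_{k=1}^d j_k$, the product structure \eqref{eq: eigenvalues} together with \eqref{assump:poly} gives $\mu_j\asymp P_j^{-(1+2\alpha/d)}$, and the standard hyperbolic-cross count $|\{j:P_j\leq T\}|\asymp T(\log T)^{d-1}$ yields
\begin{align*}
\big|\{j:\mu_jN\geq\sigma^2\}\big|\asymp(N/\sigma^2)^{d/(d+2\alpha)}(\log N)^{d-1}=o(N),\qquad \sum_{j\in\mathbb{N}^d}\nu_j\asymp\sum_{j\in\mathbb{N}^d}\nu_j^2\asymp(N/\sigma^2)^{d/(d+2\alpha)}(\log N)^{d-1},
\end{align*}
using $\sigma^2\gtrsim1$ and $d/(d+2\alpha)<1$; the first relation gives (i) for $N$ large. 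For (ii), inserting these into \eqref{def:deltaN}, taking $\mathcal{I}$ to consist of the $M\asymp N\big(m^2(N/\sigma^2)^{d/(d+2\alpha)}(\log N)^{d-1}\big)^{-1}$ largest eigenvalues, and using the tail estimate $\sum_{\ell:P_\ell>T}\mu_\ell\asymp T^{-2\alpha/d}(\log T)^{d-1}$, one obtains $N\delta_N/\sum_j\nu_j\lesssim N^2M^{-2\alpha/d}$ up to a power of $\log N$, which is $o(1)$ precisely in the stated range $m=o(N^{(2\beta-3d)/(4\beta)})$ (a smaller $\alpha$ only enlarges $M$). This is the same computation that makes $\delta_N$ negligible in Corollary \ref{cor: contraction:poly}, so I would simply appeal to that step.

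The genuinely new point is (iii). For $f_0=\sum_j f_{0,j}\psi_j\in\Theta^\beta(B)$ one has $\sum_j(\sum_i j_i^2)^\beta f_{0,j}^2\leq B^2$, hence
\begin{align*}
\|\tilde P(f_0)\|_2^2=\sum_{j\in\mathbb{N}^d}(1-\nu_j)^2f_{0,j}^2\leq B^2\sup_{j\in\mathbb{N}^d}(1-\nu_j)^2\Big(\sum_{i=1}^d j_i^2\Big)^{-\beta}.
\end{align*}
Since $(\sum_i j_i^2)^{-\beta}$ is decreasing in $\sum_i j_i^2$ and $\sum_i j_i^2\geq dP_j^{2/d}$ by the AM--GM inequality, the supremum is controlled (up to constants) by its value along tuples with all coordinates equal, i.e.\ by $\sup_{x\geq1}g(x)$ with $g(x):=\sigma^4\big(\sigma^2+Nx^{-(1+2\alpha/d)}\big)^{-2}x^{-2\beta/d}$ and $x=P_j$, using $1-\nu_j=\sigma^2/(\sigma^2+\mu_jN)$ and $\mu_j\asymp P_j^{-(1+2\alpha/d)}$. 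This single-variable maximization gives $\sup_{j}(1-\nu_j)^2(\sum_i j_i^2)^{-\beta}\lesssim(N/\sigma^2)^{-\min(2\beta/(d+2\alpha),\,2)}$, the maximizer sitting at $x\asymp(N/\sigma^2)^{d/(d+2\alpha)}$ (the boundary of the ``bulk'') when $\beta\leq d+2\alpha$ and at $x\asymp1$ otherwise. Combining with the variance estimate above,
\begin{align*}
\frac{N}{\sigma^2}\frac{\|\tilde P(f_0)\|_2^2}{\sum_{j\in\mathbb{N}^d}\nu_j}\lesssim\frac{B^2}{(\log N)^{d-1}}(N/\sigma^2)^{\frac{2\alpha}{d+2\alpha}-\min\left(\frac{2\beta}{d+2\alpha},\,2\right)},
\end{align*}
and the exponent is $\leq0$ because $\alpha\leq\beta$ (it equals $2(\alpha-\beta)/(d+2\alpha)\leq0$ when $\beta\leq d+2\alpha$, and $-2(d+\alpha)/(d+2\alpha)<0$ otherwise). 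Hence \eqref{cond:biasUB} holds with $c$ a fixed multiple of $B^2$, uniformly over $\Theta^\beta(B)$.

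With (i)--(iii) in hand, Theorem \ref{th: unc quant} applies and gives $P_0(f_0\in\hat B_{n,m,\gamma}(L_N))\to1$ for any $L_N\to\infty$; inspecting its proof, the conclusion depends on $f_0$ only through the constant $c$ in \eqref{cond:biasUB} (the other two hypotheses do not involve $f_0$), so the convergence is uniform over the class of parameters satisfying \eqref{cond:biasUB} with that $c$, which contains $\Theta^\beta(B)$. That is the claim. The step I expect to be the main obstacle is the bookkeeping of the $(\log N)^{d-1}$ factors coming from the multidimensional hyperbolic-cross counts — in $\sum_j\nu_j$, in the tail sum feeding $\delta_N$, and in locating the maximizer in (iii) — together with checking that the bias bound is genuinely uniform over all of $\Theta^\beta(B)$ rather than merely for smooth representatives; the hypothesis $\alpha\leq\beta$ (no over-smoothing) is exactly what keeps that exponent non-positive.
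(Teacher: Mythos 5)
Your proposal takes essentially the same route as the paper: the proof of Corollary \ref{cor: coverage:poly} consists exactly of verifying the hypotheses of Theorem \ref{th: unc quant} — the cardinality bound and the sums $\sum_j\nu_j\asymp\sum_j\nu_j^2\asymp (N/\sigma^2)^{d/(2\alpha+d)}\log^{d-1}(N/\sigma^2)$ via the hyperbolic-cross counts (Lemmas \ref{lem:card} and \ref{lem: bound eigen poly}), the bias-versus-variance condition \eqref{cond:biasUB} uniformly over $\Theta^\beta(B)$ using $\alpha\leq\beta$, and the $\delta_N$ condition by the same computation as in Corollary \ref{cor: contraction:poly}. Your treatment of the bias differs only cosmetically: you pull the supremum of $(1-\nu_j)^2(\sum_i j_i^2)^{-\beta}$ out of the Sobolev constraint and do a one-variable maximization in $P_j$, whereas the paper splits into the bulk set $\mathcal{N}$ and its complement and invokes Lemmas \ref{lem: bound-maxfunc} and \ref{lem: bound-minfunc}; both give bias squared $\lesssim(N/\sigma^2)^{-2\beta/(2\alpha+d)}$ (your version even covers the regime $\beta>2\alpha+d$, where the hypothesis $r>s/d$ of Lemma \ref{lem: bound-maxfunc} is not met, via the $\min(\cdot,2)$ exponent). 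One slip to fix: the parenthetical ``a smaller $\alpha$ only enlarges $M$'' is backwards — a smaller $\alpha$ increases $\sum_j\nu_j^2$, hence \emph{shrinks} the admissible cardinality $M\asymp N/(m^2\sum_j\nu_j^2)$ and also slows the tail decay $T^{-2\alpha/d}$, so the verification of $N\delta_N/\sum_j\nu_j=o(1)$ for $\alpha<\beta$ does not follow monotonically from the $\alpha=\beta$ case and the stated range of $m$; the paper is equally terse at this point (it simply cites Corollary \ref{cor: contraction:poly}, which is the $\alpha=\beta$ case), so this does not set your argument apart from the paper's, but the monotonicity claim as written is incorrect and the uniformity in $\alpha\le\beta$ deserves an explicit check.
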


The proof is given in Section \ref{sec:cor: coverage:poly}.

\begin{corollary}\label{cor: coverage:exp}\textbf{(Exponential)}
For given $B>0$ and $\beta\geq d/2$, let us take $m=o(N^{\frac{2\beta-d}{2(2\beta+d)}})$ and assume that the covariance kernel $K$ satisfies Assumptions \ref{ass: bound eigenf} and \eqref{assump:exp} with 
$$(m/N)^{\frac{1}{2d}}(\log N)^{1-\frac{1}{2d}} \lesssim a\lesssim\big(\frac{\sigma}{N}\big)^{1/(2\beta+d)}\log N$$
 and $b=1$.
Then for $L_N$ tending to infinity arbitrarily slowly the aggregated posterior credible set $\hat{B}_{n,m,\gamma}(L_n)$ obtains asymptotic frequentist coverage one, i.e.
$$\inf_{f_0\in\Theta^{\beta}(B)}P_0(f_0\in\hat{B}_{n,m,\gamma}(L_N))\to 1.$$
\end{corollary}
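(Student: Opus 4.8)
The plan is to verify the hypotheses of Theorem \ref{th: unc quant} for the exponentially decaying kernel with the stated rescaling and then invoke the first (coverage tending to one) conclusion. Concretely, there are three things to check: (i) the mild eigenvalue counting condition $|\{j\in\mathbb N^d:\mu_jN\geq\sigma^2\}|\leq N$; (ii) the technical negligibility of the tail term, $N\delta_N/\sum_j\nu_j=o(1)$; and (iii) the bias-domination bound \eqref{cond:biasUB}, i.e. $(N/\sigma^2)\|\tilde P(f_0)\|_2^2\big/\sum_j\nu_j\leq c$, uniformly over $f_0\in\Theta^\beta(B)$. Since $b=1$ and $\mu_j=\prod_{k=1}^d\mu_{j_k}$ with $\mu_i\asymp e^{-ai}$, the one-dimensional index set where $\mu_iN\geq\sigma^2$ has cardinality $\asymp a^{-1}\log(N/\sigma^2)$, so in $d$ dimensions the count is $\asymp (a^{-1}\log(N/\sigma^2))^d$; with the prescribed upper bound $a\lesssim(\sigma/N)^{1/(2\beta+d)}\log N$ this is of order $(N/\sigma^2)^{d/(2\beta+d)}(\log\cdot)^{O(1)}\ll N$, and (i) holds. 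These are essentially the computations already carried out in Corollary \ref{cor: contraction:exp}, so much of the work can be quoted from Section \ref{sec: proof_contraction:exp}.

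For step (iii), the key point is that the choice $\alpha=\beta$ in the contraction corollary is replaced here by the \emph{under-smoothing} regime dictated by the lower bound $a\gtrsim (m/N)^{1/(2d)}(\log N)^{1-1/(2d)}$. I would first bound $\|\tilde P(f_0)\|_2^2=\sum_j(1-\nu_j)^2f_{0,j}^2$ where $1-\nu_j=\sigma^2/(\sigma^2+\mu_jN)\leq (\sigma^2/\mu_jN)\wedge 1$. Splitting at the "effective cutoff" $\mu_jN\asymp\sigma^2$ (i.e. $\sum_k j_k\asymp a^{-1}\log(N/\sigma^2)=:R_N$), the contribution of $f_0\in\Theta^\beta(B)$ beyond the cutoff is $\lesssim B^2 R_N^{-2\beta}$ after accounting for the $(\sum_i j_i^2)^\beta\geq R_N^{2\beta}$ weight, while the contribution below the cutoff is controlled using $(1-\nu_j)\lesssim\sigma^2/(\mu_jN)=e^{a\sum j_k}\sigma^2/N$ against the Sobolev weight — this is a standard Sobolev-vs-geometric-tail estimate. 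Meanwhile $\sum_{j\in\mathbb N^d}\nu_j\asymp |\{j:\mu_jN\gtrsim\sigma^2\}|\asymp R_N^d$, so condition \eqref{cond:biasUB} reduces to showing $(N/\sigma^2)\,B^2 R_N^{-2\beta}\big/R_N^d\lesssim 1$, equivalently $(N/\sigma^2)\lesssim R_N^{2\beta+d}\asymp (a^{-1}\log(N/\sigma^2))^{2\beta+d}$. Plugging in the upper bound on $a$ gives exactly $R_N\gtrsim (N/\sigma^2)^{1/(2\beta+d)}$, so $R_N^{2\beta+d}\gtrsim N/\sigma^2$ and the inequality holds (with room to spare, so it holds uniformly over the Sobolev ball). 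I would also check that the alternative regime $\mu_jN<\sigma^2$ piece is no larger.

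Step (ii), the negligibility $N\delta_N/\sum_j\nu_j=o(1)$, is the most delicate bookkeeping. From \eqref{def:deltaN}, $\delta_N=\inf\{N(\sum_j\nu_j^2)\sum_{\ell\in\mathcal I^c}\mu_\ell:\,|\mathcal I|\leq n(m\sum_j\nu_j^2)^{-1}\}$. Here $\sum_j\nu_j^2\asymp\sum_j\nu_j\asymp R_N^d$ (since $\nu_j\in[0,1]$ and concentrates near $0$ or $1$), so the budget for $\mathcal I$ is $|\mathcal I|\lesssim n/(mR_N^d)=N/(m^2R_N^d)$; choosing $\mathcal I$ to be a box of side $\asymp (N/(m^2R_N^d))^{1/d}$ makes $\sum_{\ell\in\mathcal I^c}\mu_\ell$ exponentially small in that side length, of order $e^{-c a (N/(m^2R_N^d))^{1/d}}$ up to polynomial factors. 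Then $N\delta_N/\sum_j\nu_j\lesssim N\cdot N R_N^d \cdot e^{-ca(N/(m^2R_N^d))^{1/d}}/R_N^d = N^2 e^{-ca(N/(m^2R_N^d))^{1/d}}$, and it suffices that the exponent beats $2\log N$, i.e. $a\,(N/(m^2R_N^d))^{1/d}\gg\log N$. Using $R_N^d\lesssim (N/\sigma^2)^{d/(2\beta+d)}(\log N)^{O(1)}$ and the constraint $m=o(N^{(2\beta-d)/(2(2\beta+d))})$ — which is precisely what makes $N/(m^2R_N^d)$ a growing power of $N$ — this reduces to $a\gtrsim (m/N)^{1/(2d)}(\log N)^{1-1/(2d)}$ after absorbing the polynomial-log factors, which is exactly the stated lower bound on $a$.

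I expect step (ii) to be the main obstacle: it is where the precise exponents in the admissible range of $a$ and $m$ come from, and one has to be careful that the same box $\mathcal I$ simultaneously respects the cardinality budget and yields a tail sum small enough after multiplication by the large prefactor $N\sum_j\nu_j^2$. Everything else — (i) and (iii) — is a routine repetition of the estimates from Section \ref{sec: proof_contraction:exp}, together with the observation that moving from $\alpha=\beta$ to $\alpha\leq\beta$ (undersmoothing) only decreases the bias and hence only helps \eqref{cond:biasUB}. Once all three hypotheses are verified, Theorem \ref{th: unc quant} applied with $L_n=L_N\to\infty$ yields $\inf_{f_0\in\Theta^\beta(B)}P_0(f_0\in\hat B_{n,m,\gamma}(L_N))\to 1$, completing the proof.
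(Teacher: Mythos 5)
Your route coincides with the paper's: the proof in Section \ref{sec:cor: coverage:exp} likewise verifies the three hypotheses of Theorem \ref{th: unc quant}, using \eqref{eq: UBbias} together with \eqref{eq: help:UBbias} for the bias condition \eqref{cond:biasUB} (this is where the upper bound on $a$ enters), the count \eqref{eq:cond:exp:size} for the condition $|\{j:\mu_jN\geq\sigma^2\}|\leq N$, and disposing of $\delta_N$ by quoting the computation behind Corollary \ref{cor: contraction:exp}. Your steps (i) and (iii) match this essentially verbatim, and your step (ii) is in fact more explicit than the paper, which only cites the contraction corollary (whose $\delta_N$ bound was derived at the single value $a=(\sigma^2/N)^{1/(2\beta+d)}\log(N/\sigma^2)$); redoing the bookkeeping over the whole admissible range of $a$, as you do, is the right instinct.

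The one place where your write-up does not close is the final reduction in step (ii). With $R_N=a^{-1}\log(N/\sigma^2)$, $\sum_j\nu_j^2\asymp R_N^d$ (Lemma \ref{lem: bound eigen expo}) and the cardinality budget $|\mathcal{I}|\lesssim N/(m^2R_N^d)$ from \eqref{def:deltaN}, your requirement $a\,(N/(m^2R_N^d))^{1/d}\gg\log N$ reads $a^2N^{1/d}m^{-2/d}/\log N\gg\log N$, i.e. $a\gtrsim (m^2/N)^{1/(2d)}\log N$. This is strictly stronger than the stated lower bound $(m/N)^{1/(2d)}(\log N)^{1-1/(2d)}$, by a factor of order $m^{1/(2d)}(\log N)^{1/(2d)}$; since $m$ may grow polynomially in $N$, this cannot be ``absorbed into polynomial-log factors'' as you assert, so as written your verification of the $\delta_N$ condition does not cover $a$ at the bottom of the stated range. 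Note that under $m=o(N^{\frac{2\beta-d}{2(2\beta+d)}})$ the interval between $(m^2/N)^{1/(2d)}\log N$ and $(\sigma/N)^{1/(2\beta+d)}\log N$ is still nonempty, and the paper itself works at exactly this level of precision (its conditions are explicitly ``not optimized'' and its proof simply quotes Corollary \ref{cor: contraction:exp}), so this is an exponent-bookkeeping mismatch with the stated hypotheses rather than a wrong approach; still, you should either restrict to the stronger lower bound on $a$ that your computation actually needs, or produce a sharper choice of $\mathcal{I}$ and tail estimate that genuinely reaches $(m/N)^{1/(2d)}(\log N)^{1-1/(2d)}$.
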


The proof is given in Section \ref{sec:cor: coverage:exp}. We note that in both examples the conditions on the regularity $\beta$ and the number of machines are of technical nature and they were not optimized.

\section{Simulation study}\label{sec:simulation}
We illustrate our findings by performing a numerical analysis on synthetic data. We consider the regression model  \eqref{eq: nonpara reg} with uniform design on the unit interval $X_i\stackrel{iid}{\sim}U[0,1]$, fix $\sigma=1$ and take the underlying true functional parameter of interest as
\begin{align}\label{eq:sim:f0}
    f_0(x) = \sum_{i=3}^{\infty}\sin(i)
 i^{-\frac{3}{2}} \sqrt{2}\cos\left(\pi\left(i-\frac{1}{2}\right)x\right).
\end{align}
Note that this function belongs to any Sobolev class of regularity $\beta<1$. For computational reasons we truncate the above series at $i = 200$. We generate datasets in the distributed framework \eqref{eq: nonpara reg dist} and compare the statistical properties of the methods considered in Section \ref{sec:methods}. As prior distributions we consider both the Mat\'{e}rn and the squared exponential covariance kernels. We note that the Mat\'ern kernel has polynomially decaying eigenvalues, while the squared exponential (under Gaussian design on the real line) has exponentially decaying ones, see for instance \cite{rasmussen:williams:2006}. Since the true function is $\beta=1$ smooth, we choose a matching regularity hyper-parameter $\nu = \beta - 1/2= 1/2$ in the Mat\'{e}rn covariance kernel. Furthermore, we consider the rescaled version of the squared exponential covariance kernel  $k(x_i, x_j) = e^{-(x_i - x_j)^2  \tau_N/2}$ with rescaling parameter $\tau_N=10 N^{1/(1+2\beta)}=10N^{1/3}$. All our code is written in Python and run on an Intel Core i5-10300H CPU.

First we consider the Mat\'ern covariance kernel and choose a matching regularity of one to the underlying true function $f_0$ in \eqref{eq:sim:f0}. We start by demonstrating in Figure \ref{fig:naive} that the naive averaging method described above Theorem \ref{thm:counter} has indeed suboptimal statistical performance. We consider increasing sample sizes $N = 500, 1000, 10000$, $50000$, while keeping $m = 100$ fixed. The posterior mean is plotted in blue, the true function in black and the 95\% closest out of 1000 posterior draws in $L_2$-distance  to the posterior mean in gray.  In accordance with the theoretical results, this approach performs poorly both for recovering the underlying true function and for quantifying the remaining uncertainty of the procedure.

\begin{figure}
\begin{center}
  \begin{subfigure}[b]{0.4\linewidth}
    \centering
    \includegraphics[width=0.9\linewidth]{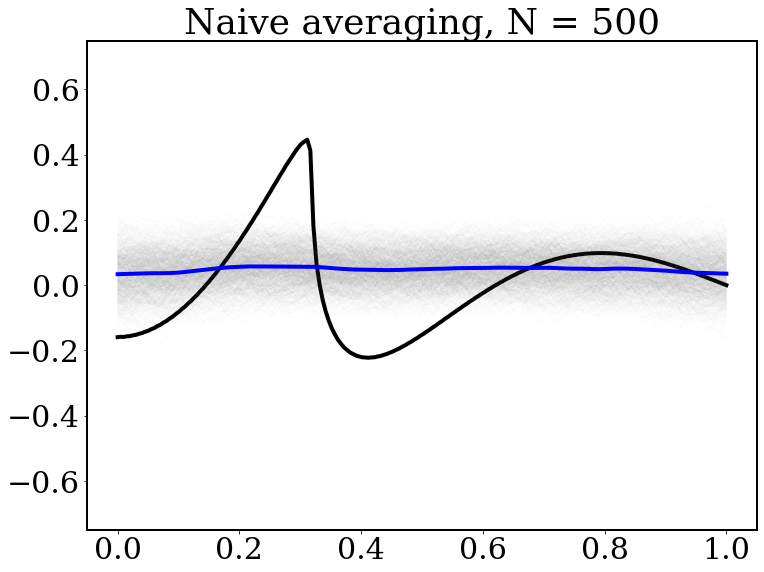} 
  \end{subfigure}
  \begin{subfigure}[b]{0.4\linewidth}
    \centering
    \includegraphics[width=0.9\linewidth]{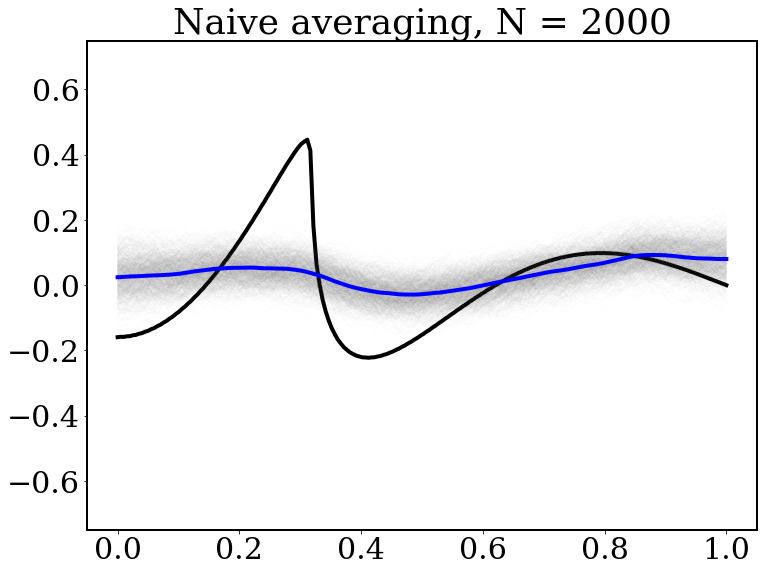} 
  \end{subfigure} 
  \begin{subfigure}[b]{0.4\linewidth}
    \centering
    \includegraphics[width=0.9\linewidth]{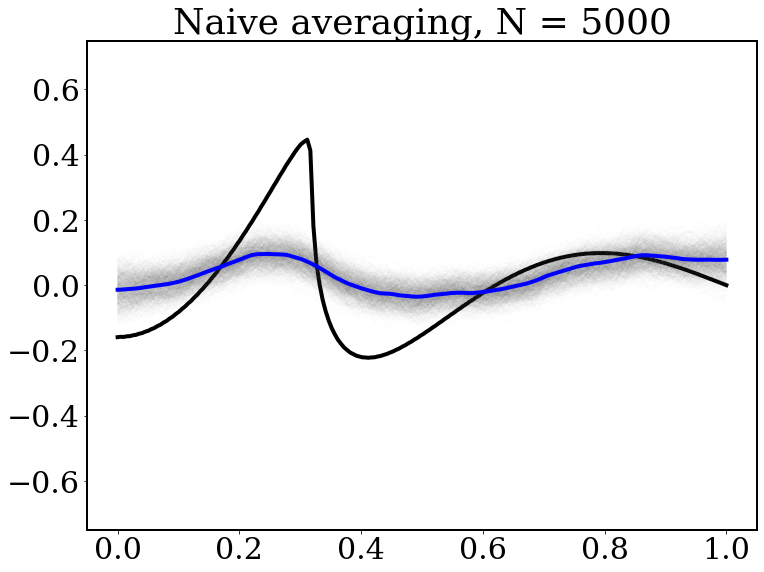} 
  \end{subfigure}
  \begin{subfigure}[b]{0.4\linewidth}
    \centering
    \includegraphics[width=0.9\linewidth]{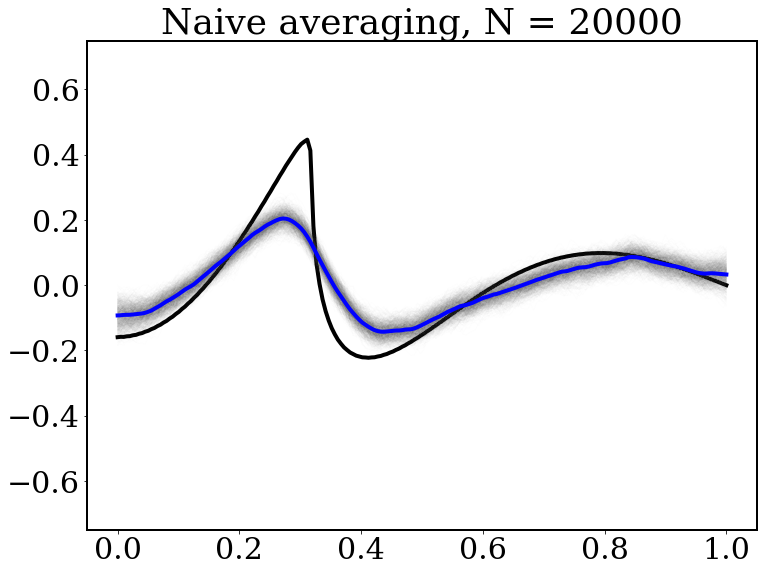} 
  \end{subfigure} 
  \caption{The naive averaging method. Increasing sample sizes from $N=500$ to $N=20000$ are considered. The true function $f_0$ is plotted by black, the posterior mean by blue and we plot the 95\% closest draws to the posterior mean from the posterior in $L_2$-distance out of 1000, indicating the $L_2$-credible sets.}
  \label{fig:naive} 
\end{center}
\end{figure}

Next we demonstrate that Methods I and II have optimal statistical performance. First we consider Method I in Figure~\ref{fig:nincreasematern} and take sample sizes $N = 500, 1000, 10000$, $50000$, while keeping $m = 100$ fixed. The posterior mean is plotted again in blue, the true function in black and the 95\% closest out of 1000 posterior draws in $L_2$-distance to the posterior mean in gray. The figure nicely illustrates that as the sample size increases the estimation accuracy will be better and that in all cases we achieve reliable frequentist coverage. 

\begin{figure} 
\begin{center}
  \begin{subfigure}[b]{0.4\linewidth}
    \centering
    \includegraphics[width=0.9\linewidth]{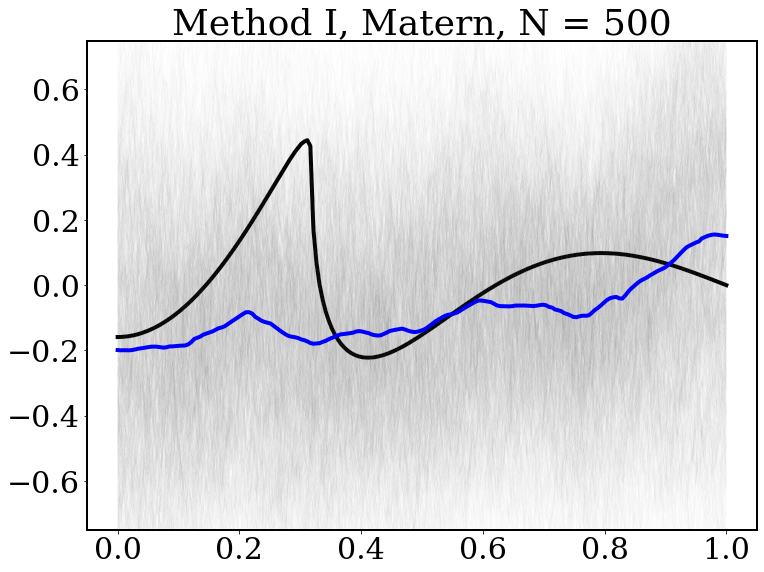} 
    \vspace{4ex}
  \end{subfigure}
  \begin{subfigure}[b]{0.4\linewidth}
    \centering
    \includegraphics[width=0.9\linewidth]{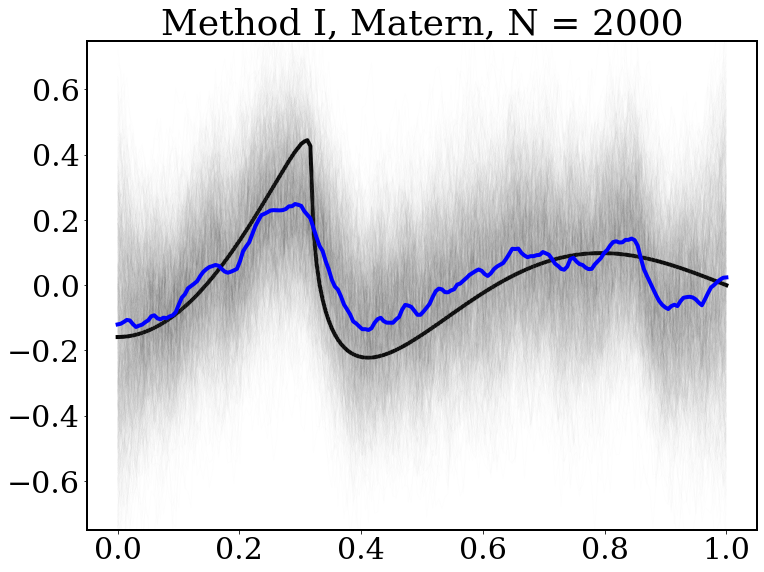} 
    \vspace{4ex}
  \end{subfigure} 
  \begin{subfigure}[b]{0.4\linewidth}
    \centering
    \includegraphics[width=0.9\linewidth]{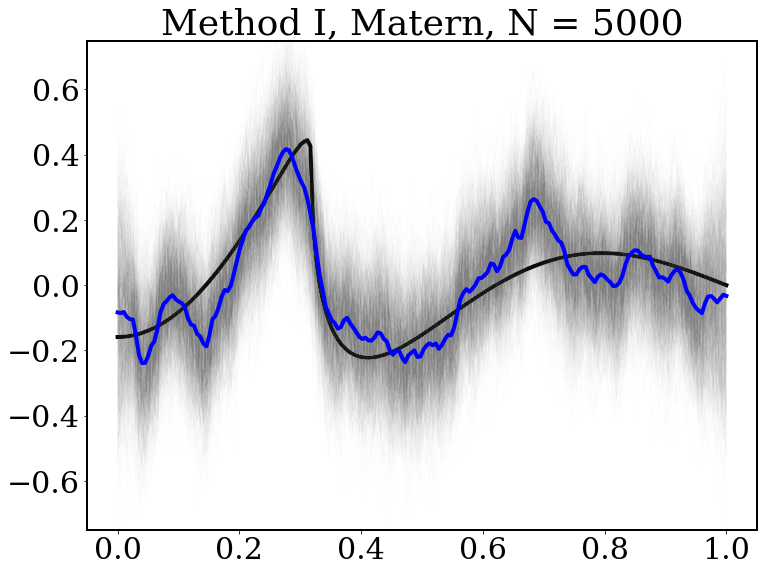} 
  \end{subfigure}
  \begin{subfigure}[b]{0.4\linewidth}
    \centering
    \includegraphics[width=0.9\linewidth]{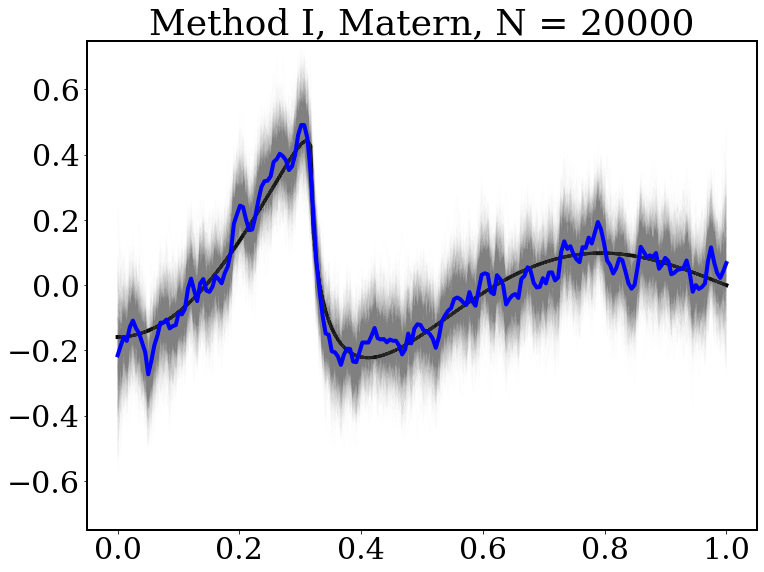} 
  \end{subfigure} 
  \caption{Method I with Mat\'ern covariance kernel. Increasing sample sizes from $N=500$ to $N=20000$ are considered. The true function $f_0$ is plotted in black, the posterior mean in blue and we plot the 95\% closest draws from the posterior to the posterior mean in $L_2$-distance out of 1000 samples in gray, indicating the $L_2$-credible sets.}
  \label{fig:nincreasematern} 
\end{center}
\end{figure}

Then we compare the finite sample size  behavior of the Methods I and II for both the Mat\'ern and squared exponential covariance kernels in Figure \ref{fig:maternmethodcomparison}. We set the total sample size $N=5000$ and take $m=100$ machines. Although the resulting approximations are different, in all cases we get good approximation and reliable uncertainty quantification.

\begin{figure}
\begin{center}
  \begin{subfigure}[b]{0.4\linewidth}
    \centering
    \includegraphics[width=0.9\linewidth]{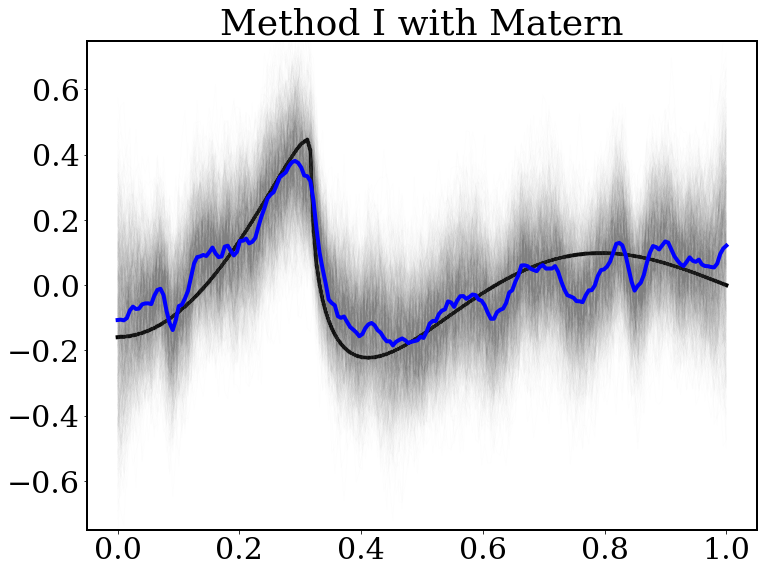} 
    \vspace{4ex}
  \end{subfigure}
  \begin{subfigure}[b]{0.4\linewidth}
    \centering
    \includegraphics[width=0.9\linewidth]{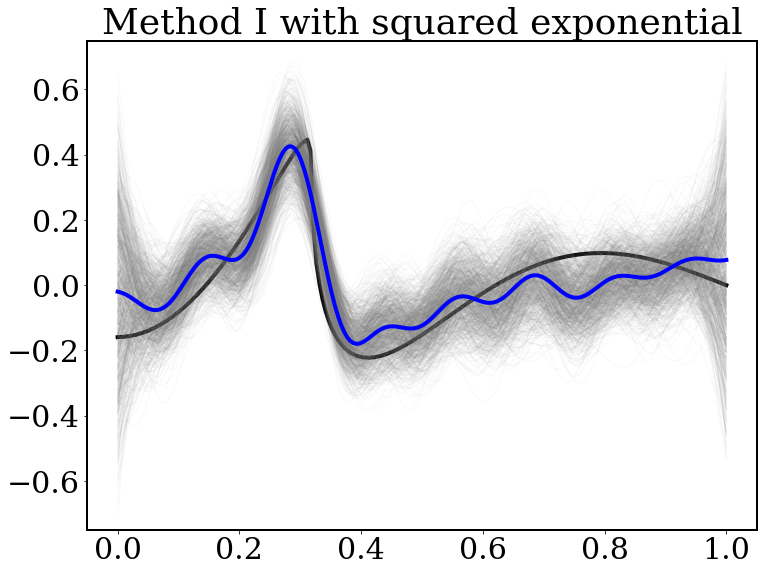} 
    \vspace{4ex}
  \end{subfigure} 
  \begin{subfigure}[b]{0.4\linewidth}
    \centering
    \includegraphics[width=0.9\linewidth]{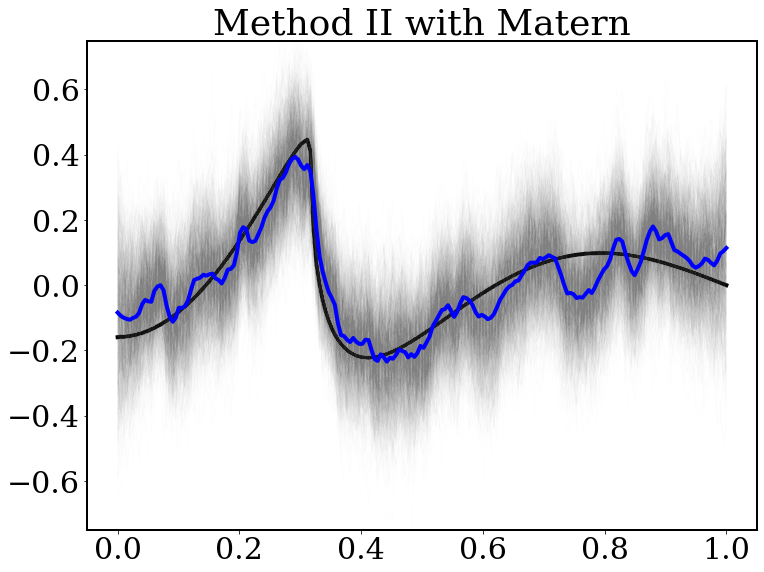} 
  \end{subfigure}
  \begin{subfigure}[b]{0.4\linewidth}
    \centering
    \includegraphics[width=0.9\linewidth]{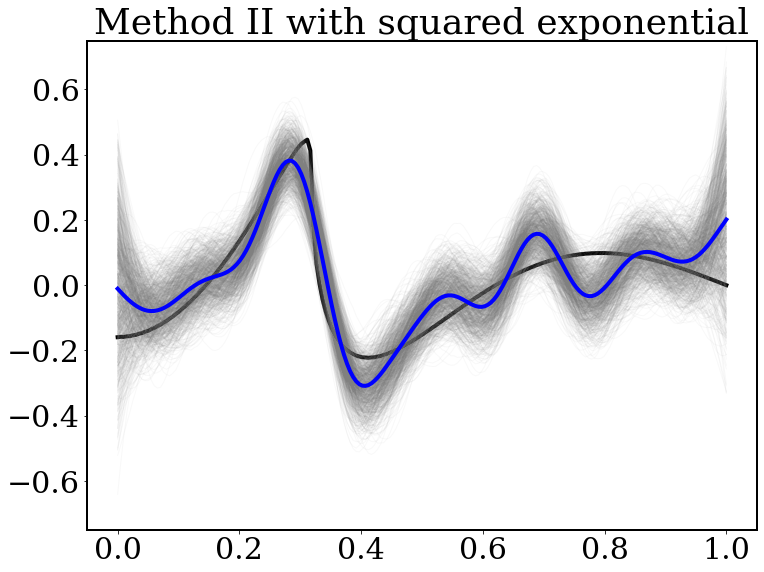} 
  \end{subfigure} 
  \caption{Comparison of Methods I and II for the covariance Mat\'ern and squared exponential kernels. We take $N = 5000$ and $m = 100$, the true function is set to be \eqref{eq:sim:f0} and it is plotted in black. The posterior mean is drawn in blue and we plot the 95\% closest draws the posterior to the posterior mean from  in $L_2$-distance out of 1000 samples in gray, indicating the $L_2$-credible sets.}
  \label{fig:maternmethodcomparison} 
\end{center}
\end{figure}

Finally, in Table \ref{table:numres} we report the average mean squared errors  of the posterior means and the corresponding standard deviations over 100 experiments. We considered both Methods I and II and both the Mat\'ern and squared exponential covariance kernels. We have set the number of machines $m=100$, while we considered increasing sample sizes $N = 500, 2000, 5000, 20000$. One can see that the MSE of the methods are rather similar. We have also computed the coverage of the credible sets. We took 1000 draws from the posterior and compute the $95\%$ percentile of the $L_2$-distance of these draws to the posterior mean. If the $L_2$-distance of $f_0$ to the posterior mean is smaller than the $95\%$ percentile, we consider the truth to be inside of the $L_2$ credible set. In Table \ref{table:coverage} we report the proportion the true function is inside of the credible sets. One can observe that in all cases we get good coverage, although the coverage property decreases as the sample size increases in the case of the squared exponential covariance kernel especially for Method II. A possible explanation for this is that the rescaling factor of $\tau_N=10N^{1/3}$ does not reach an appropriate bias-variance trade-off and a (possibly by a $\log N$ factor) larger rescaling should be applied.

\begin{table}[!ht]
    \begin{center}
    \begin{tabular}{|l|l|l|l|l|l|}
    \hline
        Method$\backslash N$ & 500 & 2000 & 5000 & 20000 \\ \hline
        Method I (Mat\'{e}rn) & 0.020(0.074) & 0.008 (0.050) & 0.005 (0.034) & 0.002 (0.021) \\ \hline
        Method II (Mat\'{e}rn) & 0.021(0.080) & 0.008 (0.056)  & 0.005 (0.035) & 0.002 (0.020) \\ \hline
Method I (Squared exp.) & 0.024 (0.094) & 0.014 (0.070) & 0.006 (0.043) & 0.002 (0.019) \\ \hline
        Method II (Squared exp.) & 0.025 (0.110) & 0.013 (0.063)  & 0.006 (0.044) & 0.002 (0.020) \\ \hline
    \end{tabular}
    \end{center}
  \captionof{table}{Average mean squared errors of the posterior means and corresponding standard deviations out of 100 runs. Increasing sample size is considered from $N=500$ to $N=20000$, while the number of machines are kept fixed at $m=100$. Both methods and both the Mat\'ern kernel and the rescaled squared exponential covariance kernels are considered. \label{table:numres}}
\end{table}

\begin{table}[!ht]
    \begin{center}
    \begin{tabular}{|l|l|l|l|l|l|}
    \hline
        Method$\backslash N$ & 500 & 2000 & 5000 & 20000 \\ \hline
        Method I (Mat\'{e}rn) & 1 &1 & 1 & 1 \\ \hline
        Method II (Mat\'{e}rn) & 1 & 1  & 1 & 1 \\ \hline
Method I (Squared exp.) & 1 & 1 & 0.99 & 0.85 \\ \hline
        Method II (Squared exp.) & 1 & 1  & 0.9 & 0.7\\ \hline
    \end{tabular}
    \end{center}
  \captionof{table}{Coverage of $L_2$ credible sets based on 100 runs of the algorithm.Increasing sample size is considered from $N=500$ to $N=20000$, while the number of machines are kept fixed at $m=100$. Both methods and both the Mat\'ern kernel and the rescaled squared exponential covariance kernels are considered. \label{table:coverage}}
\end{table}

\section{Discussion}\label{sec:discussion}
In this paper, we have shown that distributed methods can be applied in the context of Gaussian Process regression and give accurate results in terms of recovery and uncertainty quantification. Although a naive averaging of the local posteriors will fail to capture the true functional parameter, there exist techniques obtaining a global posterior distribution which has similar asymptotic  behaviour as the non-distributed posterior distribution. We demonstrate through various examples (including both polynomially and exponentially decaying eigenvalues for the covariance kernel)  that the aggregated posterior distribution can achieve optimal minimax contraction rates and good frequentist coverage.

One of the main contributions of our paper is that we do not need to assume that the true functional parameter belongs to the Reproducing Kernel Hilbert Space (RKHS) corresponding to the considered Gaussian Process prior, which is a typical assumption in the literature. This way our results are less restrictive and can be applied for a larger class of functions and priors. For instance squared exponential covariance kernels contain analytic functions in their RKHS, hence assuming that the truth belongs to that space would substantially reduce the applicability of the method. Also, in case of Mat\'ern kernels by relaxing this assumption we do not have to introduce an (artificial) rescaling factor which is needed otherwise as the regularity of the Mat\'ern kernel can't be chosen to match the regularity of the truth.

The optimal choice of the tuning hyper-parameter in the covariance kernel depends on the regularity of the underlying function, which is typically unknown in practice. In the non-distributed setting various adaptive techniques were proposed to solve this problem, including hierarchical and empirical Bayes methods. However, in the distributed setting standard approaches based on the (marginal) likelihood fail, as it was demonstrated in the context of the Gaussian white noise model, see \cite{szabo:vzanten:19}. An open and interesting line of research is to understand whether adaptation is possible at all in the distributed regression framework \eqref{sec: GP fram} and if yes to provide method achieving it.

\section{Proofs of the main results}\label{sec: proofs}
\subsection{Kernel Ridge Regression in non-distributed setting}\label{sec:nondistributed:KRR}
Let us first consider the non-distributed case, i.e. take $m=1$. We introduce some notations and recall standard results for the kernel ridge regression method. The posterior mean $\hat{f}_N$ coincides with the kernel ridge regression (KRR) estimator
\begin{align}\label{eq: KRR}
\hat{f}_N=\hat{f}_{KRR}=\arg\min_{f\in\mathcal{H}}[-\ell_N(f)], ~ -\ell_N(f):=\sum_{i=1}^N(Y_i-f(X_i))^2+\sigma^2\|f\|_{\mathcal{H}}^2,
\end{align}
where the RKHS $\mathcal{H}$ corresponds to the prior covariance kernel $K$, see Chapter 6 in \cite{rasmussen:williams:2006}. The objective function of the KRR is composed of the average squared-error loss and an RKHS penalty term. In view of the representer theorem for RKHSs, the solution to \eqref{eq: KRR} is a linear combination of kernel functions, which renders it equivalent to a quadratic program. 

By the reproducing property, all functions $f$ in the RKHS $\mathcal{H}$ can be evaluated as $f(X_i)=\langle f,K_{X_i}\rangle_{\mathcal{H}}$ with $K_{X_i}=K(X_i,\cdot)$, and $\|f\|_{\mathcal{H}}^2=\langle f,f\rangle_{\mathcal{H}}$. The corresponding log-likelihood function takes the form (up to an additive constant term)
$$-\ell_N(f):=\sum_{i=1}^N(Y_i-\langle f,K_{X_i}\rangle_{\mathcal{H}})^2+\sigma^2\langle f,f\rangle_{\mathcal{H}}.$$
Performing a Fr\'{e}chet derivation on $\ell_N:(\mathcal{H},\langle \cdot,\cdot\rangle_{\mathcal{H}})\to\mathbb{R}$ with respect to $f$, one can obtain the score function. By multiplying the score function with $1/(2N)$ we arrive at the function $\hat{S}_N:\mathcal{H}\to\mathcal{H}$ given as
\begin{align}\label{eq: score KRR}
\hat{S}_N(f)=\frac{1}{N}\Bigg[\sum_{i=1}^N(Y_i-f(X_i))K_{X_i}-\sigma^2f\Bigg].
\end{align}
For simplicity we refer to $\hat{S}_{N}(f)$ as the score function from now on and note that the \textit{KRR} estimate $\hat{f}_N=\hat{f}_{KRR}$ then verifies 
\begin{align*}
\hat{S}_N(\hat{f}_N)=0.
\end{align*}
 Define also $S_N(f):=E_0 \hat{S}_N(f)$ to be the population version of the score function, i.e. 
\begin{align}\label{eq: pop score KRR}
S_N(f)=\int_{\mathcal{X}}(f_0(x)-f(x))K_{x}dx-\frac{\sigma^2}{N}f=F(f_0-f)-\frac{\sigma^2}{N}f,
\end{align}
where the operator $F:L_2(\mathcal{X})\to\mathcal{H}$ is a convolution with the kernel $K$, i.e. $F(g)=\int g(x)K_xdx$. Considering $g=\sum_{j\in\mathbb{N}^d}g_j\psi_j$, a straightforward calculation gives $F(g)=\sum_{j\in\mathbb{N}^d}\mu_jg_j\psi_j$. We can then rewrite $S_N(f)$ as
\begin{align}
S_N(f)=\sum_{j\in\mathbb{N}^d}\Big(\mu_jf_{0,j}-\frac{\sigma^2+\mu_jN}{N}f_j\Big)\psi_j,\label{hulp:new1}
\end{align}
which leads immediately to a solution of $S_N(f)=0$ with $f_j=\nu_{j}f_{0,j}$ where $\nu_{j}=\nu_{N,j}=\frac{\mu_jN}{\sigma^2+\mu_jN}$.

Let us define another operator $\tilde{F}:L_2(\mathcal{X})\to\tilde{\mathcal{H}}$, with $\tilde{\mathcal{H}}$ denoting the Hilbert space with inner product $\langle f,g \rangle_{\tilde{\mathcal{H}}}=\sum_{j\in\mathbb{N}} \nu_j^{-2} f_jg_j$,  as $\tilde{F}(g)=\sum_{j\in\mathbb{N}^d}\nu_{j}g_j\psi_j$ (we omit the dependence on $N$ in the notation). Note that both operators $F$ and $\tilde{F}$ are bijective and linear, which allows us to rewrite (\ref{eq: pop score KRR}) as
$$S_N(f)=F(f_0)-F \circ \tilde{F}^{-1}(f)=F(f_0-\tilde{F}^{-1}(f)).$$
Hence, using the notation $\Delta\hat{f}_{N}=\hat{f}_N-\tilde{F}(f_0)$ we get
\begin{align}\label{eq: help:delta}
\Delta\hat{f}_{N}=-\tilde{F}\circ F^{-1}\circ S_N(\hat{f}_N).
\end{align}
It will also be useful to define the operator $\tilde{P}=\mathrm{id}-\tilde{F}$, where $\mathrm{id}$ denotes the identity operator on $L_2(\mathcal{X})$. Also note that $S_N(\tilde{F}(f_0))=0$.

Table \ref{tb:notations} provides a summary of the key above notations in order to help the reader find a way in the proofs. 
\begin{table}[h]
\caption{Notation references}
\begin{tabular}{l|l}
\textbf{Symbol} & \textbf{Definition}\\
\hline
$\mathbb{D}_N$ & Data, $\{(Y_i,X_i)_{i=1}^N\}$.\\
$f_0$ & True function.\\
$\epsilon_i$ & Gaussian error, $\epsilon_i=Y_i-f_0(X_i)\sim\mathcal{N}(0,\sigma^2)$.\\
$\hat{f}_N$ & posterior mean function, $E_X[f|\mathbb{D}_N]$, equal to the KRR solution.\\
&$\hat{f}_N=\arg\min_{f\in\mathcal{H}}\Big[N^{-1}\sum_{i=1}^N(Y_i-f(X_i))^2+N^{-1}\sigma^2\|f\|_{\mathcal{H}}^2\Big]$.\\
$F$ & Convolution with kernel $K$, $F(g)=\sum_{j\in\mathbb{N}^d}\mu_j g_j\psi_j$.\\
$F^{-1}$ & Inverse of $F$, $F^{-1}(g)=\sum_{j\in\mathbb{N}^d}(g_j/\mu_j)\psi_j$.\\
$\{\nu_j\}_{j\in\mathbb{N}^d}$ & Eigenvalues of the equivalent kernel $\nu_j=\mu_j N/(\sigma^2+\mu_j N)$.\\
$\tilde{F}$ & Convolution with the equivalent kernel $\tilde{F}(g)=\sum_{j\in\mathbb{N}^d}\nu_j g_j\psi_j$.\\
$\tilde{F}^{-1}$ & Inverse of $\tilde{F}$, $\tilde{F}^{-1}(g)=\sum_{j\in\mathbb{N}^d}(g_j/\nu_j)\psi_j$.\\
$\tilde{P}$ & $\tilde{P}=\mathrm{id}-\tilde{F}$.\\
$\hat{S}_N$ & Sample score function, $\hat{S}_N(f)=N^{-1}[\sum_{i=1}^N(Y_i-f(X_i))K_{X_i}-\sigma^2f]$.\\
$S_N$ & Population score function, $S_N(f)=F(f_0-\tilde{F}^{-1}(f))$.
\end{tabular}
\label{tb:notations}
\end{table}

\subsection{Kernel Ridge Regression in distributed setting}

In the distributed setting (both in Methods I and II), accordingly, the $k$th local sample and population score functions are given (up to constant multipliers) by
\begin{align}
\hat{S}_{n}^{(k)}(f)&=\frac{1}{n}\Big[ \sum_{i=1}^{n} (Y_i^{(k)}-f(X_{i}^{(k)}))K_{X_i^{(k)}}-m^{-1}\sigma^2f \Big],\nonumber\\
S_n^{(k)}(f)&=   \int_{\mathcal{X}}(f_0(x)-f(x))K_{x}dx-\frac{\sigma^2}{nm }f=S_N(f),\label{eq: distr:score}
\end{align}
respectively.  Analogously to (\ref{eq: score KRR}), every local KRR estimate satisfies $\hat{S}_{n}^{(k)}(\hat{f}_{n}^{(k)})=0$. In view of $S_n^{(k)}=S_N$ we have $S_n^{(k)}\big(\tilde{F}(f_0)\big)=0$, hence  for each machine, let $\Delta\hat{f}_{n}^{(k)}=\hat{f}^{(k)}_{n}-\tilde{F}(f_0)$ denote the difference between the empirical and the population minimizer of the KRR.


\subsection{Proof of Theorem \ref{thm: contr:general}}
\label{sec: contr:general}
In the proof we use ideas from the proof of Theorem 2.1 of \cite{bhattacharya:pati:yun:17}. The main differences between their and our results are that we are considering (various) distributed Bayesian methods (not just the standard posterior with $m=1$) and that we extend the results to general Gaussian process priors (including kernel with polynomially decaying and exponentially decaying eigenvalues), while the proof \cite{bhattacharya:pati:yun:17} only covered the rescaled version of the kernel with polynomially decaying eigenvalues and scaling factor depending on the sample size. More specifically we do not require that the true function belongs to the RKHS of the GP prior, which substantially extends the applicability of our results. Finally in our analysis we consider the multivariate $d$-dimensional case, work with $L_2$-norm and consider Sobolev type of regularity classes rather than $L_{\infty}$ norm and hyper-rectangles induced by the series decomposition with respect to the eigenbasis $\boldsymbol\psi_j$. These extensions and conceptual differences required substantially different proof techniques than in \cite{bhattacharya:pati:yun:17}.

First note that in view of the inequality $(a+b)^2\leq 2a^2+2b^2$, we get
$$E_0\|\hat{f}_{n,m}-f_0\|_2^2\leq 2\|f_0-\tilde{F}(f_0)\|_2^2+2E_0\|\hat{f}_{n,m}-\tilde{F}(f_0)\|_2^2,$$
where $\hat{f}_{n,m}$ is the mean of the global posterior $\Pi^{\dagger}_{n,m}(.|\mathbb{D}_N)$ obtained with either Method $I$ or $II$. Then we show in Section \ref{sec: proof:bias} that for $f_0\in \Theta^{\beta}(B)$
\begin{align}
\label{eq: UB:bias}
E_0\|\hat{f}_{n,m}-\tilde{F}(f_0)\|_2^2\lesssim\Big(\frac{1}{N}\sum_{j\in\mathbb{N}^d}\nu_j^2\Big)\big(\|\tilde{P}(f_0)\|^2+\sigma^2\big)+ \delta_{N},
\end{align}
 where
\begin{align*}
\delta_{N}= \inf \Big\{ N\sum_{j\in\mathbb{N}^d}\nu_j^2 \sum_{\ell\in \mathcal{I}^c}\mu_\ell:\, \mathcal{I}\subset\mathbb{N}^d,\, |\mathcal{I}|\leq \frac{n}{m}(\sum_{j\in\mathbb{N}^d}\nu_j^2)^{-1}\Big\},
\end{align*}
concluding the proof of the first statement.

For the contraction rate note that by using Markov's and triangle inequalities we get
\begin{align*}
E_0\Pi_{n,m}^{\dagger}\Big(f:\|f-f_0\|_2 \geq M_N\epsilon_N|\mathbb{D}_N\Big)\leq 2\frac{E_0E_{n,m}^{\dagger}[\|f-\hat{f}_{n,m}\|_2^2|\mathbb{D}_N]+E_0\|\hat{f}_{n,m}-f_0\|_2^2}{M_N^2\epsilon_N^2}.
\end{align*}
Therefore it is sufficient to show that
\begin{align*}
E_0E_{n,m}^{\dagger}[\|f-\hat{f}_{n,m}\|_2^2|\mathbb{D}_N]=O(\frac{\sigma^2}{N}\sum_{j}\nu_j).
\end{align*}
In view of Fubini's theorem the expected squared $L_2$-norm of the process $f-\hat{f}_{n,m}|\mathbb{D}_N$ is the integral of the aggregated posterior variance of $f(x)$ over $\mathcal{X}$,
$$E_{n,m}^{\dagger}[\|f-\hat{f}_{n,m}\|_2^2|\mathbb{D}_N]=\int_{\mathcal{X}}\Var_{n,m}^{\dagger}[f(x)|\mathbb{D}_N]dx.$$
In the non-distributed setting, the posterior variance only depends on the design matrix $\mathbb{X}$. The expectation of this integral is known as the \textit{learning curve} in Chapter 7 of \cite{rasmussen:williams:2006}. In Section \ref{sec: proof:variance} we prove that
\begin{align}\label{eq: asymp post var}
E_0 \int_{\mathcal{X}} \Var_{n,m}^{\dagger}(f(x)|\mathbb{D}_N)dx\asymp \sigma^2\sum_{j\in\mathbb{N}^d}\frac{\mu_j}{\sigma^2+N\mu_j}=\frac{\sigma^2}{N}\sum_{j\in\mathbb{N}^d}\nu_j,
\end{align}
concluding the proof of the statement.

\subsubsection{Proof of \eqref{eq: UB:bias}}\label{sec: proof:bias}

First note, that in view of the inequality $(a+b)^2\leq 2a^2+2b^2$,
\begin{align*}
 \|\Delta\hat{f}_{n}^{(k)} \|_2^2&\leq 2  \Big\|\Delta\hat{f}_{n}^{(k)}-\tilde{F} \circ F^{-1}\circ \hat{S}^{(k)}_{n}(\tilde{F}(f_{0}))\Big\|_2^2
 + 2\Big\| \tilde{F} \circ F^{-1}\circ \hat{S}^{(k)}_{n}(\tilde{F}(f_{0}))\Big\|_2^2.
\end{align*}
Then we show below that
\begin{align}
 E_0\Big\|\Delta\hat{f}_{n}^{(k)}-\tilde{F} \circ F^{-1}\circ \hat{S}^{(k)}_{n}(\tilde{F}(f_{0}))\Big\|_2^2\lesssim \frac{1}{m} E_0\|\Delta\hat{f}_{n}^{(k)}\|_2^2+\delta_{N},\label{eq: hulp1:old}
\end{align}
which together with the preceding display implies
\begin{align*}
E_0\|\Delta\hat{f}_{n}^{(k)} \|_2^2\leq (2+o(1))\Big(E_0 \Big\|\tilde{F} \circ F^{-1}\circ \hat{S}^{(k)}_{n}(\tilde{F}(f_{0}))\Big\|_2^2+C\delta_{N}\Big).
\end{align*}
By combining the preceding two displays we arrive at
\begin{align*}
&E_0\Big\|\Delta\hat{f}_{n}^{(k)}-\tilde{F} \circ F^{-1}\circ \hat{S}^{(k)}_{n}(\tilde{F}(f_{0}))\Big\|_2^2\nonumber\\
&\qquad\qquad\lesssim \frac{1}{m}E_0   \Big\|\tilde{F} \circ F^{-1}\circ \hat{S}^{(k)}_{n}(\tilde{F}(f_{0}))\Big\|_2^2+\delta_{N}.
\end{align*}

For the aggregated estimator we get that
\begin{align*}
\|\Delta\hat{f}_{n,m} \|_2^2&\lesssim 
 \Big\|\Delta\hat{f}_{n,m}-\frac{1}{m}\sum_{k=1}^{m}\tilde{F} \circ F^{-1}\circ \hat{S}^{(k)}_{n}(\tilde{F}(f_{0}))\Big\|_2^2\\
&\qquad + \Big\|\frac{1}{m}\sum_{k=1}^m \tilde{F} \circ F^{-1}\circ \hat{S}^{(k)}_{n}(\tilde{F}(f_{0}))\Big\|_2^2.
\end{align*}
Then in view of the preceding display, the independence of the data across machines and $E_0 \big(\tilde{F} \circ F^{-1}\circ \hat{S}^{(k)}_{n}(\tilde{F}(f_{0}))\big)=0$ we get that
\begin{align*}
E_0\|\Delta\hat{f}_{n,m} \|_2^2\lesssim \frac{1}{m}E_0 \Big\|\tilde{F} \circ F^{-1}\circ \hat{S}^{(k)}_{n}(\tilde{F}(f_{0}))\Big\|_2^2+\delta_{N}.
\end{align*}
Finally we verify below that
\begin{align}
E_0\Big\|\tilde{F} \circ F^{-1}\circ \hat{S}^{(k)}_{n}(\tilde{F}(f_0))\Big\|_2^2\lesssim
\Big(\frac{1}{n}\sum_{j\in\mathbb{N}^d}\nu_j^2\Big)\big(\|\tilde{P}(f_0)\|_2^2+\sigma^2\big),
\label{eq: hulp2:old}
\end{align}
which together with $\|\tilde{P}(f_0)\|_2^2\leq \|f_0\|_2^2\leq B^2$
provides us \eqref{eq: UB:bias}.\\


\noindent\textbf{Proof of \eqref{eq: hulp1:old}:} First note that  the identity $\Delta\hat{f}_{n}^{(k)}=-\tilde{F} \circ  F^{-1}\circ S_n^{(k)}(\hat{f}^{(k)}_{n})$ follows from assertions \eqref{eq: help:delta} and \eqref{eq: distr:score}. This implies together with the properties of $\hat{S}^{(k)}_{n}$ and $S_n^{(k)}$, that
\begin{align}
\big(\hat{S}^{(k)}_{n}(\hat{f}^{(k)}_{n})-S_n^{(k)}(\hat{f}^{(k)}_{n})\big)&-\big(\hat{S}^{(k)}_{n}(\tilde{F}(f_0))-S_n^{(k)}(\tilde{F}(f_0))\big)\nonumber\\
&=F\circ\tilde{F}^{-1}(\Delta\hat{f}_{n}^{(k)})-\hat{S}^{(k)}_{n}(\tilde{F}(f_0)).\label{eq: help:equiv}
\end{align}
On the other hand, in view of \eqref{eq: distr:score},
$$\hat{S}^{(k)}_{n}(f)-S_n^{(k)}(f)=\frac{1}{n}\sum_{i=1}^{n}\big(Y_i^{(k)}-f(X_i^{(k)})\big)K_{X_i^{(k)}}-\int_{\mathcal{X}}\big(f_0(x)-f(x)\big)K_xdx$$
for all functions $f\in \mathcal{H}$. Therefore, by applying the preceding display twice with $f=\hat{f}_{n}^{(k)}$ and $f=\tilde{F}(f_0)$, we get that
\begin{align*}
\big(\hat{S}^{(k)}_{n}(\hat{f}^{(k)}_{n})&-S_n^{(k)}(\hat{f}^{(k)}_{n})\big)-\big(\hat{S}^{(k)}_{n}(\tilde{F}(f_0))-S_n^{(k)}(\tilde{F}(f_0))\big)\\
&=-\frac{1}{n}\sum_{i=1}^{n}\Delta\hat{f}_{n}^{(k)}(X_i^{(k)})K_{X_i^{(k)}}+\int_{\mathcal{X}}\Delta\hat{f}_{n}^{(k)}(x)K_{x}dx.
\end{align*}
Combining assertion \eqref{eq: help:equiv} with the preceding display and then using Lemma \ref{lem:bound:random} (with $\hat{g}=\Delta\hat{f}_{n}^{(k)}$, satisfying the boundedness assumption, see Lemma \ref{lem:help:bounded}) together with Lemma \ref{lem:largeJ},  we get for arbitrary index set $\mathcal{I}\subset\mathbb{N}^d$ that
\begin{align*}
&E_0\Big\|\Delta\hat{f}_{n}^{(k)} - \tilde{F} \circ F^{-1}\circ S^{(k)}_{n}(\tilde{F}(f_0))\Big\|_2^2\nonumber\\
&\qquad= E_0\Big\|(\tilde{F}\circ F^{-1})\Big(\frac{1}{n}\sum_{i=1}^{n}\Delta\hat{f}_{n}^{(k)}(X_i^{(k)})K_{X_i^{(k)}}-\int_{\mathcal{X}}\Delta\hat{f}_{n}^{(k)}(x)K_{x}dx\Big)\Big\|_2^2\nonumber\\
&\qquad\lesssim \frac{ |\mathcal{I}|\sum_{j\in\mathbb{N}^d}\nu_j^2}{n}E_0\|\Delta\hat{f}_{n}^{(k)}\|_2^2+ 
N\sum_{j\in\mathbb{N}^d}\nu_j^2\sum_{\ell \in \mathcal{I}^c}\mu_\ell.
\end{align*}
Taking the minimum over $|\mathcal{I}|\leq \frac{n}{m}(\sum_{j\in\mathbb{N}^d}\nu_j^2)^{-1}$ we get that
\begin{align}
E_0\Big\|\Delta\hat{f}_{n}^{(k)} - \tilde{F} \circ F^{-1}\circ S^{(k)}_{n}(\tilde{F}(f_0))\Big\|_2^2\lesssim \frac{1}{m}E_0\|\Delta\hat{f}_{n}^{(k)}\|_2^2+N\delta_{N}\label{eq: hulp4}
\end{align}
concluding the proof of \eqref{eq: hulp1:old}.\\

\noindent\textbf{Proof of \eqref{eq: hulp2:old}}.  In view of the linearity of the operator $\tilde{F}\circ F^{-1}$, the inequality $\|f+g\|_2^2\leq 2\|f\|_2^2+2\|g\|_2^2$, and
\begin{align*}
\hat{S}^{(k)}_{n}(\tilde{F}(f_0))&=\frac{1}{n}\sum_{i=1}^{n}(Y_i^{(k)}-f_0(X_i^{(k)}))K_{X_i^{(k)}}\\&+\frac{1}{n}\sum_{i=1}^{n}\tilde{P}(f_0)(X_i^{(k)})K_{X_i^{(k)}}-\frac{\sigma^2}{mn}\tilde{F}(f_0),
\end{align*}
the left hand side of \eqref{eq: hulp2:old} can be bounded from above as
\begin{align*}
E_0&\Big\|\tilde{F} \circ F^{-1}\Big(\hat{S}^{(k)}_{n}(\tilde{F}(f_0))- S_n^{(k)}(\tilde{F}(f_0))\Big) \Big\|_2^2\\
&\quad \leq  2 E_0\Big\|\tilde{F} \circ F^{-1}\Big(\frac{1}{n} \sum_{i=1}^{n}\tilde{P}(f_0)(X_i^{(k)})K_{X_i^{(k)}}-E_X[\tilde{P}(f_0)(X)K_{X}]\Big)\Big\|_2^2\\
&\qquad\qquad+2E_0\Big\|\tilde{F} \circ F^{-1}\Big(\frac{1}{n}\sum_{i=1}^{n}\epsilon_i^{(k)}K_{X_i^{(k)}}\Big)\Big\|_2^2\\
&\quad=:(T_1+T_2).
\end{align*}
We deal with terms $T_1$ and $T_2$ separately. In view of Lemma \ref{lem: bound} (with $g=\tilde{P}(f_0)$)
$$T_1\leq \frac{2C}{n}\sum_{j\in\mathbb{N}^d}\nu_j^2\|\tilde{P}(f_0)\|_2^2,$$
for some $C>0$. Since the operator $\tilde{F}\circ F^{-1}$ is linear, we get that
\begin{align*}
T_2&=\frac{2}{n^2}\sum_{i=1}^{n}E_0\big( (\epsilon_i^{(k)})^2\|\tilde{F}\circ F^{-1}(K_{X_i^{(k)}})\|_2^2\big)\\
&\qquad+\frac{4}{n^2}\sum_{1\leq i<\ell\leq n}E_0\Big(\epsilon_i^{(k)}\epsilon_\ell^{(k)}\tilde{F}\circ F^{-1}(\langle K_{X_i^{(k)}},K_{X_\ell^{(k)}}\rangle_2)\Big)\\
&=\frac{2\sigma^2}{n}E_0\|\tilde{F} \circ F^{-1}(K_{X_1^{(k)}})\|_2^2=\frac{2\sigma^2}{n}\sum_{j\in\mathbb{N}^d}\nu_j^2,
\end{align*}
because the cross terms are equal to $0$ due to independence of the noise $\eps_i^{(k)}$, $i=1,...,n$, $k=1,...,m$.

\subsubsection{Proof of \eqref{eq: asymp post var}}\label{sec: proof:variance}
In this section we give upper bounds for the learning curves in case of both distributed methods.

\paragraph{Method I:} Let us denote by $\mu_j^{I}=m\mu_j$ the eigenvalues of the local covariance kernel. Then in view of 
Lemma \ref{lem: bound exp post var}, the expectations of the $m$ local posterior variances are all of the same order
$$E_0E_X\Var(f(X)|\mathbb{D}_{n}^{(k)})\asymp\sigma^2\sum_{j\in\mathbb{N}^d}\frac{\mu^I_j}{\sigma^2+n\mu^I_j}=\sigma^2\sum_{j\in\mathbb{N}^d}\frac{m\mu_j}{\sigma^2+N\mu_j}=\frac{\sigma^2}{n}\sum_{j\in\mathbb{N}^d}\nu_j.$$
 Since the variance of the global posterior distribution $\Pi^{I}_{n,m}(.|\mathbb{D}_N)$ satisfies the following equality 
$$\Var_{n,m}^I(f(x))=m^{-2}\sum_{k=1}^m\Var(f(x)|\mathbb{D}_{n}^{(k)}),$$
one can see that 
$$E_0E_X\Var_{n,m}^I(f(X))\asymp\frac{\sigma^2}{N}\sum_{j\in\mathbb{N}^d}\nu_j.$$

\paragraph{Method II:}  First note that $\mu_j^{II}=\mu_j$ the eigenvalues of the local covariance kernel. Note that the expectations of the $m$ local posterior variances are all of the same order
$$E_0E_X\Var(f(X)|\mathbb{D}_{n}^{(k)})\asymp\frac{\sigma^2}{m}\sum_{j\in\mathbb{N}^d}\frac{\mu^{II}_j}{\sigma^2/m+n\mu^{II}_j}=\frac{\sigma^2}{N}\sum_{j\in\mathbb{N}^d}\nu_j,$$
because the variance of the noise is $\sigma^2/m$ for each machine. The variance of the aggregated posterior distribution $\Pi^{II}_{n,m}(.|\mathbb{D}_N)$ satisfies
$$E_0E_X \Var_{n,m}^{II}(f(X)|\mathbb{D}_N)\asymp\frac{\sigma^2}{N}\sum_{j\in\mathbb{N}^d}\nu_j$$
because we know that
$$\Var_{n,m}^{II}(f(X)|\mathbb{D}_N)=m^{-1}\sum_{k=1}^m\Var(f(X)|\mathbb{D}_{n}^{(k)})$$
proving assertion \eqref{eq: asymp post var}.

\subsection{Proof of Theorem \ref{thm:counter}}\label{sec:thm:counter}

The proof follows similar lines of reasoning as Theorem \ref{thm: contr:general}, where we provided general upper bounds for the contraction rate of the distributed posterior.

First we prove \eqref{eq: bad_estimate}. For the naive averaging method the local sample and population score functions coincide to the non-distributed case given in Section \ref{sec:nondistributed:KRR} with sample size $n$, i.e.
\begin{align*}
\hat{S}^{*(k)}_{n}(f)=\frac{1}{n}\Bigg[\sum_{i=1}^{n}(Y_i^{(k)}-f(X_i^{(k)}))K_{X_i^{(k)}}-f\Bigg],\\
S_n^{*(k)}(f)=\int_{\mathcal{X}}(f_0(x)-f(x))K_{x}dx-\frac{\sigma^2}{n}f=F(f_0-f)-\frac{1}{n}f.
\end{align*}
Not that the solution of the equation $S_n^{*(k)}(f)=0$ is given by the coefficients $f_j=\nu^*_jf_{0,j}$, with $\nu^*_j=\frac{\mu_jn}{1+\mu_jn}$, $j\in\mathbb{N}^d$.

Then using the inequality $a^2\geq (a-b)^2/2-b^2$ one can obtain that
$$E_0\|\hat{f}_{n,m}^*-f_0\|_2^2\geq \frac{1}{2}\|f_0-\tilde{F}^*(f_0)\|_2^2-E_0\|\hat{f}_{n,m}^*-\tilde{F}^*(f_0)\|_2^2,$$
where $\tilde{F}^*(g)=\sum_{j\in\mathbb{N}}\nu_j^* g_j\psi_j$ and $\hat{f}_{n,m}^*$ is the mean of the global posterior $\Pi^{*}_{n,m}(.|\mathbb{D}_N)$ obtained with the naive averaging method.

First note that
\begin{align}
\|f_0-\tilde{F}^*(f_0)\|_2^2&=\sum_{j=1}^\infty\frac{m}{m+\mu_j^{*}n }f_{0,j}^2\geq \frac{c_L}{2} \sum_{(n/\sigma^2)^{1/(1+2\beta)}\leq j} j^{-1-2\beta}(\log j)^{-2}\nonumber\\
& \geq c_0 n^{-2\beta/(1+2\beta)}  (\log n)^{-2},\label{eq:counter:LB:approx}
\end{align}
for some small enough $c_0>0$. We conclude the proof of \eqref{eq: bad_estimate} by showing below that $E_0\|\hat{f}_{n,m}-\tilde{F}(f_0)\|_2^2= o(n^{-2\beta/(2\beta+1)}  (\log n)^{-2})$. 

Similarly to \eqref{eq: UB:bias} we can derive (by replacing $\tilde{F}$  and $\nu$ with $\tilde{F}^*$  and $\nu^*$, respectively) that
\begin{align*}
E_0\|\hat{f}_{n,m}-\tilde{F}^*(f_0)\|_2^2\lesssim\Big(\frac{1}{N}\sum_{j=1}^\infty(\nu_j^{*})^2\Big)\big(\|\tilde{P}^*(f_0)\|_2^2+\sigma^2\big)+ \delta_{N}^*,
\end{align*}
where $\delta_{N}^*= N\sum_{j=1}^\infty(\nu_j^*)^2 \sum_{\ell=I}^{\infty}\mu_\ell$, with $I= n/\big(m\sum_{j=1}^{\infty}(\nu_j^*)^2\big)$.
Note that $\|\tilde{P}^*(f_0)\|_2^2=O(1)$ and in view of Lemmas \ref{lem: bound eigen poly} and \ref{lem: bound eigen expo}, $\sum_{j=1}^\infty(\nu_j^*)^2\asymp n^{1/(1+2\beta)}$; hence 
$$I\asymp n^{2\beta/(1+2\beta)}/m.$$
Therefore the first term on the right hand side of the preceding display is $O(n^{-2\beta/(1+2\beta)}/m)$ and
$$\delta_N^*\lesssim n^{1/(1+2\beta)}NI^{-2\beta}\asymp N^{2-2\beta}m^{-1+4\beta}=o(n^{-2\beta/(1+2\beta)}  (\log n)^{-2}),$$
where the last step holds for large enough choice of $\beta$ and not to large choice of $m$. For instance taking $\beta\geq 2$ and $m=o(N^{1/(2+2\beta)})$ we get that
$$\delta_N^* n^{2\beta/(2\beta+1)}\lesssim N^{-1/6}=o(\log^{-2} n).$$

It remained to deal with \eqref{eq: bad_contraction}.  First note that by the computations above combined with Markov's inequality there exists a sequence $\rho_n\rightarrow 0$ such that
\begin{align*}
P_0( \|\hat{f}_{n,m}^*-\tilde{F}^*(f_0)\|_2\geq \rho_n n^{-\beta/(1+2\beta)}(\log n)^{-1})\rightarrow 0.
\end{align*}
Then by triangle inequality, \eqref{eq:counter:LB:approx} and Markov's inequality we get for $c<c_0$ that
\begin{align*}
E_0\Pi_{n,m}^*&\Big(f:\|f-f_{0}\|_2\leq c n^{-\beta/(1+2\beta)}(\log n)^{-1}|\mathbb{D}_N\Big)\\
&\leq  E_0 \Pi_{n,m}^*\Big(  \|f_0-\tilde{F}^*(f_0)\|_2 - c n^{-\frac{\beta}{1+2\beta}}(\log n)^{-1}\\
&\qquad\qquad -\|\hat{f}_{n,m}^*-\tilde{F}^*(f_0)\|_2 \leq \|f-\hat{f}_{n,m}^*\|_2|\mathbb{D}_N\Big)\\
&\leq  E_0 \Pi_{n,m}^*\Big(  (c_0-c-\rho_n) n^{-\beta/(1+2\beta)}(\log n)^{-1} \leq \|f-\hat{f}_{n,m}^*\|_2|\mathbb{D}_N\Big)+o(1)\\
&\lesssim n^{2\beta/(2\beta+1)}(\log n)^2 E_0E^*_{n,m}\|f-\hat{f}_{n,m}^*\|^2_2.
\end{align*}

We conclude the proof by noting that
$$E_0E_X\Var\left(f(X)|\mathbb{D}^{(k)}_n\right)=\sigma^2\sum_{j=1}^{\infty}\frac{\mu_j}{\sigma^2+n\mu_j}=\frac{\sigma^2}{n}\sum_{j=1}^{\infty}\nu^*_j,$$
for all $k\in\{1,...,m\}$, hence
$$E_0 E_{n,m}^{*}\|f-\hat{f}_{n,m}^*\|_2^2=\frac{1}{m^2}\sum_{k=1}^mE_0E_X\Var\left(f(X)|\mathbb{D}^{(k)}_n\right)=\frac{\sigma^2}{N}\sum_{j=1}^{\infty}\nu^*_j\lesssim \frac{\sigma^2}{m}n^{-2\beta/(2\beta+1)}.$$

\subsection{Proof of Theorem \ref{th: unc quant}}
We first consider the non-distributed case $m=1$ for clearer presentation and then extend our results to the distributed setting.
\subsubsection{Non-distributed setting}\label{sec:nondistr:cov}
\paragraph{Connection to KRR}

Similarly to the posterior mean, the posterior covariance function $\hat{C}_N$ can be given as
$$\hat{C}_N(x,x')=K(x,x')-\hat{K}_N(x,x'),$$
where $\hat{K}_N(x,\cdot)=K(\cdot,\mathbb{X})[K(\mathbb{X},\mathbb{X})+\sigma^2I_N]^{-1}K(\mathbb{X},x)$, or equivalently
\begin{align}\label{eq: cov:KRR}
 \hat{K}_{x,N}=\hat{K}_N(x,\cdot)=\arg\min_{g\in\mathcal{H}}\Big[\frac{1}{N}\sum_{i=1}^N(K(x,X_i)-g(X_i))^2+\frac{\sigma^2}{N}\|g\|_{\mathcal{H}}^2\Big],
\end{align}
see assertion (8) of \cite{bhattacharya:pati:yun:17}.

Then by taking the Frechet derivative of the expression on the right hand side we arrive to the (adjusted) score function and its expected value
\begin{align*}
\hat{S}_{K_x,N}(g)=N^{-1}\Big(\sum_{i=1}^N \big(K_x(X_i)-g(X_i)\big)K_{X_i}-\sigma^2 g \Big),\\
S_{K_x,N}(g)=E\hat{S}_{K_x,N}(g)= \int_{\mathcal{X}}\big(K_x(z)-g(z) \big)K_z dz-\frac{\sigma^2}{N}g.
\end{align*}
Then similarly to the posterior mean in Section \ref{sec:nondistributed:KRR} the following assertions hold
\begin{align}
S_{K_x,N}(g)&=F(K_x)-F\circ \tilde{F}^{-1}(g)=F\big(K_x-\tilde{F}^{-1}(g)\big),\label{eq:cov:help1}\\
\Delta \hat{K}_{x,N}&= \hat{K}_{x,N}-\tilde{F}(K_x)=-\tilde{F}\circ F^{-1} \circ S_{K_x,N}( \hat{K}_{x,N}),\label{eq:cov:help2}\\
\hat{S}_{K_x,N}(\tilde{F}(K_x))&=\frac{1}{N}\Big( \sum_{i=1}^N \tilde{P}(K_x)(X_i)K_{X_i}-\sigma^2 \tilde{F}(K_x) \Big),\label{eq:cov:help3}\\
F\circ \tilde{F}^{-1}(\Delta \hat{K}_{x,N})&-\hat{S}_{K_x,N}( \tilde{F}(K_x))\nonumber \\
&= - \frac{1}{N} \sum_{i=1}^N \Delta \hat{K}_{x,N}(X_i)K_{X_i}+\int_{\mathcal{X}} \Delta \hat{K}_{x,N}(x')K_{x'} dx', \label{eq:cov:help4}
\end{align}
and note that $ \hat{K}_{x,N}$ and $\tilde{F}(K_x)$ are the zero points of the functions $\hat{S}_{K_x,N}$ and $S_{K_x,N}$, respectively.
\paragraph{Under-smoothing}

Following from the triangle inequality, to obtain frequentist coverage for the credible ball it is sufficient to show that for $L_n\rightarrow \infty$
$$P_0\Big(\|\tilde{P}(f_0)\|_2+\|\hat{f}_N-\tilde{F}(f_0)\|_2\leq L_N r_{N,\gamma}\Big)\rightarrow 1.$$
The preceding display is implied by assumption $\eqref{cond:biasUB}$ and assertions
\begin{align}
\label{eq: bound var - undersmth}
P_0\Big(\|\Delta\hat{f}_{N}\|_2^2\leq L_N\frac{\sigma^2}{N}\sum_{j\in\mathbb{N}^d}\nu_j\Big)\to 1,\\
\label{eq: bound radius - undersmth}
{P_0\Big(}r_{N,\gamma}^2\geq \frac{1}{2C_{\psi}^2} \frac{\sigma^2}{N}\sum_{j\in\mathbb{N}^d}\nu_j{\Big)\to 1},
\end{align}
where $\Delta\hat{f}_{N}:=\hat{f}_N-\tilde{F}(f_0)$, verified below.\\

\textbf{Proof of \eqref{eq: bound var - undersmth}:} In view of assertion \eqref{eq: UB:bias} with $m=1$ (and hence $n=N$) and Markov's inequality we get 
\begin{align}
P_0\Big(\|\Delta\hat{f}_{N}\|_2^2\geq L_N\frac{\sigma^2}{N}\sum_{j\in\mathbb{N}^d}\nu_j\Big)&\leq \frac{E_0 \|\Delta\hat{f}_{N}\|_2^2}{L_N\frac{\sigma^2}{N}\sum_{j\in\mathbb{N}^d}\nu_j}\nonumber\\
&\lesssim \frac{\Big(N^{-1}\sum_{j\in\mathbb{N}^d}\nu_j^2\Big)+  \delta_{N}}{L_N\sigma^2N^{-1}\sum_{j\in\mathbb{N}^d}\nu_j}\nonumber\\
&=O\Big( 1/L_N+ N\delta_{N}/\sum_{j\in\mathbb{N}^d}\nu_j  \Big)=o(1).\label{hulp:new:star}
\end{align}

\textbf{Proof of \eqref{eq: bound radius - undersmth}:} The radius $r_{N,\gamma}$ is defined, {conditionally on $\mathbb{X}$}, as $P(\|W_N\|_2^2\leq r_{N,\gamma}^2|\mathbb{X})=1-\gamma$, where $W_N$ is a centered GP with covariance kernel $\hat{C}_N$ given in \eqref{eq: postcov}. In view of Chebyshev's inequality
$$r_{N,\gamma}^2\geq E[\|W_N\|_2^2|\mathbb{X}]-(1-\gamma)^{-1/2}\Var(\|W_N\|_2^2|\mathbb{X})^{1/2}.$$
Using Fubini's theorem, the first term on the right hand side of the preceding display can be rewritten as
$$E[\|W_N\|_2^2|\mathbb{X}]=E_{\Pi}[\|f-\hat{f}_N\|_2^2|\mathbb{D}_N]=\int_{\mathcal{X}}\Var_{\Pi}[f(x)|\mathbb{D}_N]dx.$$
{The} integral on the right-hand side of the display, called the {\textit{generalization error}}, see Chapter 7 of \cite{rasmussen:williams:2006}, is asymptotically bounded from {below} almost surely by
\begin{equation}
\sigma^2\sum_{j\in\mathbb{N}^d}\frac{\mu_j}{\sigma^2+N\mu_j{\sup_{x\in\mathcal{X}} \boldsymbol{\psi}^2_j(x)}}{\geq}\frac{\sigma^2C_{\psi}^{-2}}{N}\sum_{j\in\mathbb{N}^d}\nu_j,\label{eq: hulp1}
\end{equation}
in view of assertion (12) of \cite{opper:vivarelli:1999} and Assumption \ref{ass: bound eigenf}. Furthermore, the variance of $\|W_N\|_2^2$, conditional on the design $\mathbb{X}$, is
$$\Var(\|W_N\|_2^2|\mathbb{X})=E[\|W_N\|_2^4|\mathbb{X}]-E^2[\|W_N\|_2^2|\mathbb{X}].$$
The first term on the right hand-side satisfies
\begin{align}
E[\|W_N\|_2^4|\mathbb{X}]&=E_{\Pi}[\|f-\hat{f}_N\|_2^4|\mathbb{D}_N]\\
&=\int\Bigg(\int_{\mathcal{X}}(f(x)-\hat{f}_N(x))^2dx\int_{\mathcal{X}}(f(x')-\hat{f}_N(x'))^2dx'\Bigg)\Pi(df|\mathbb{D}_N)\nonumber\\
&=\int_{\mathcal{X}}\int_{\mathcal{X}}\int(f(x)-\hat{f}_N(x))^2(f(x')-\hat{f}_N(x'))^2\Pi(df|\mathbb{D}_n)dxdx'\nonumber\\
&=\int_{\mathcal{X}}\int_{\mathcal{X}} \Var_{\Pi}[f(x)|\mathbb{D}_N]\Var_{\Pi}[f(x')|\mathbb{D}_N] +2\hat{C}_N(x,x')^2dx'dx\nonumber\\
&=\big(\int_{\mathcal{X}}\Var_{\Pi}[f(x)|\mathbb{D}_N]dx\big)^2+2\int_{\mathcal{X}}\|\hat{C}_N(x,.)\|_2^2dx\nonumber\\
&=E^2[\|W_N\|_2^2|\mathbb{X}]+2\int_{\mathcal{X}}\|\hat{C}_N(x,.)\|_2^2dx,\label{eq: var_term}
\end{align}
using Fubini's theorem and the reduction formula $EX_1^2X_2^2=Var(X_1)Var(X_2)+2Cov(X_1,X_2)^2$ for $X_1,X_2$ centered Gaussian random variables, see for instance page 189 of \cite{isserlis:1916}. Hence, again in view of Fubini's theorem,
\begin{align}
{E_0}\Var(\|W_N\|_2^2|\mathbb{X})=2\int_{\mathcal{X}}E_0\|\hat{C}_N(x,.)\|_2^2dx.\label{eq: help2}
\end{align}
Recall that the covariance function $\hat{C}_N(x,x')=K(x,x')-\hat{K}_N(x,x')$, where $\hat{K}_{x,N}=\hat{K}_N(x,.)$ is the solution to \eqref{eq: cov:KRR}. We show below that for all $x\in\mathcal{X}$
\begin{align}\label{eq: bound post_cov}
E_0\|\hat{C}_N(x,.)\|_2^2\lesssim  \|\tilde{P}(K_x)\|_2^2+\tilde\delta_{N},
\end{align}
for
$$\tilde\delta_{N}= \Big\{ (\sum_{j\in\mathbb{N}^d}\nu_j^2)^2 \sum_{\ell\in \mathcal{I}^c}\mu_\ell:\, \mathcal{I}\subset\mathbb{N}^d,\, |\mathcal{I}|=o(N/\sum_{j\in\mathbb{N}^d}\nu_j^2).\Big\}.$$
 In view of the definition of the linear operator $\tilde{P}$ and the eigenvalues $\nu_j$, $\mu_j$ we get
\begin{align}
\tilde{P}(K(x,x'))=\sum_{j\in\mathbb{N}^d}(1-\nu_j)\mu_j\boldsymbol{\psi}_j(x)\boldsymbol{\psi}_j(x')=\frac{\sigma^2}{N}\sum_{j\in\mathbb{N}^d}\nu_j\boldsymbol{\psi}_j(x)\boldsymbol{\psi}_j(x'),\label{eq:tilde:P:K}
\end{align}
for all $x,x'\in\mathcal{X}$. Then by combining the last three displays
\begin{align}
E_0\Var(\|W_N\|_2^2|\mathbb{X})&=2\int_{\mathcal{X}}E_0\|\hat{C}_N(x,.)\|_2^2dx\nonumber\\
&\lesssim \int_{\mathcal{X}}\|\tilde{P}(K(x,.))\|_2^2dx+\tilde\delta_{N}\nonumber\\
&=\Big(\frac{\sigma^2}{N}\Big)^2\int_{\mathcal{X}}\sum_{j\in\mathbb{N}^d}\nu_j^2\boldsymbol{\psi}_j(x)^2dx+\delta_{N}\frac{\sum_{j\in\mathbb{N}^d}\nu_j^2}{N}\nonumber\\
&=\Big(\frac{\sigma^2}{N}\Big)^2\sum_{j\in\mathbb{N}^d}\nu_j^2+ \delta_{N}\frac{\sum_{j\in\mathbb{N}^d}\nu_j^2}{N}.\label{eq: help3}
\end{align}
 Therefore, by Markov's inequality and Lemmas \ref{lem: bound eigen poly} and \ref{lem: bound eigen expo}, 
$$P_0\left(\Var\left(\|W_N\|_2^2|\mathbb{X}\right)^{1/2}\geq t \frac{\sigma^2}{N}\sum_{j\in\mathbb{N}^d}\nu_j\right)\lesssim t^{-2}\left(\frac{\sum_{j\in\mathbb{N}^d}\nu_j^2}{(\sum_{j\in\mathbb{N}^d}\nu_j)^2}+   \frac{N\delta_{N}\sum_{j\in\mathbb{N}^d}\nu_j^2}{(\sum_{j\in\mathbb{N}^d}\nu_j)^2} \right)\to 0$$
{for all $t> 0$. Hence by combining $\eqref{eq: hulp1}$ and the preceding display (with $t= (1-\gamma)^{1/2}C_{\psi}^{-2}/2$),
$${P_0\Big( E[\|W_N\|_2^2|\mathbb{X}]-(1-\gamma)^{-1/2}\Var(\|W_N\|_2^2|\mathbb{X})^{1/2}\geq (C_{\psi}^{-2}/2)\frac{\sigma^2}{N}\sum_{j\in\mathbb{N}^d}\nu_j\Big)\to 1}.$$

This implies that all the quantiles of $\|W_N\|_2^2$, {conditionally on $\mathbb{X}$}, are of the order $(\sigma^2/N)\sum_{j\in\mathbb{N}^d}\nu_j$ {with $P_0$-probability going to one}, including $r_{N,\gamma}^2$.\\

\textbf{Proof of \eqref{eq: bound post_cov}:} First note that by the inequality $(a+b)^2\leq 2a^2+2b^2$,
\begin{align*}
\|\hat{C}_N(x,.)\|_2^2\leq 2\|\tilde{P}(K_x)\|_2^2+ 2\|\Delta \hat{K}_{x,N}\|_2^2,
\end{align*}
where $\Delta \hat{K}_{x,N}=\hat{K}_{x,N}-\tilde{F}(K_x)$.

Next we give an upper bound for the second term of the preceding display similarly to Section \ref{sec: proof:bias}. First note  that
\begin{align*}
\|\Delta \hat{K}_{x,N}\|_2^2\lesssim  \| \Delta \hat{K}_{x,N}- \tilde{F}\circ F^{-1} \circ\hat{S}_{K_x,N}(\tilde{F}(K_{x}))  \|_2^2 +\|  \tilde{F}\circ F^{-1}\circ \hat{S}_{K_x,N}(\tilde{F}(K_{x}))  \|_2^2.
\end{align*}
Then by showing below that
\begin{align}
 E_0\Big\|\Delta\hat{K}_{x,N}-\tilde{F} \circ F^{-1}\circ \hat{S}_{K_x,N}(\tilde{F}(K_{x}))\Big\|_2^2\leq o( E_0\|\Delta\hat{K}_{x,N}\|_2^2)+\tilde\delta_{N},\label{eq: hulp:var1}
\end{align}
 we arrive at
\begin{align*}
 E_0\|\Delta \hat{K}_{x,N}\|_2^2&\lesssim  E_0 \|  \tilde{F}\circ F^{-1}\circ \hat{S}_{K_x,N}(\tilde{F}(K_{x}))  \|_2^2+\tilde\delta_{N}.
\end{align*}

Next, in view of \eqref{eq:cov:help3},
\begin{align*}
E_0\|  \tilde{F}\circ F^{-1}\circ &\hat{S}_{K_x,N}(\tilde{F}(K_{x}))  \|_2^2 
=E_0\Big\|  \tilde{F}\circ F^{-1}\Big(\hat{S}_{K_x,N}( \tilde{F}\big( K_{x}))- {S}_{K_x,N}( \tilde{F}\big( K_{x})) \Big) \Big \|_2^2\\ 
&= E_0\Big\|\tilde{F} \circ F^{-1}\Big(\frac{1}{N} \sum_{i=1}^{N}\tilde{P}(K_x)(X_i)K_{X_i}-E_X[\tilde{P}(K_x)(X)K_{X}]\Big)\Big\|_2^2\\
&\leq
\Big(\frac{1}{N}\sum_{j\in\mathbb{N}^d}\nu_j^2\Big)\|\tilde{P}(K_x)\|_2^2=o\left(\left\|\tilde{P}(K_x)\right\|_2^2\right),
\end{align*}
where the last line follows from Lemma \ref{lem: bound} with $g=\tilde{P}(K_x)$ (and $m=1$), concluding the proof of \eqref{eq: bound post_cov}.\\

\textbf{Proof of \eqref{eq: hulp:var1}:}\label{sec:proof: hulp:var1}
Similarly to \eqref{eq: hulp4}, by using assertion \eqref{eq:cov:help4}, Lemma \ref{lem:bound:random} (with $\hat{g}=\Delta\hat{K}_{x,N}$ and sample size $N$) and Lemma \ref{lem:cov:UB:help} (with $m=1$), we can show that for all $x\in\mathcal{X}$
\begin{align*}
&E_0\Big\|\Delta\hat{K}_{x,N}- \tilde{F} \circ F^{-1}\circ \hat{S}_{K_x,N}(\tilde{F}(K_{x}))\Big\|_2^2\nonumber\\
&\qquad= E_0\Big\|(\tilde{F}\circ F^{-1})\Big(\frac{1}{N}\sum_{i=1}^{N}\Delta\hat{K}_{x,N}(X_i)K_{X_i}-\int_{\mathcal{X}}\Delta\hat{K}_{x,N}(x')K_{x'}dx'\Big)\Big\|_2^2\nonumber\\
&\qquad\lesssim \frac{|\mathcal{I}|\sum_{j\in\mathbb{N}^d}\nu_j^2}{N}E_0\|\Delta\hat{K}_{x,N}\|_2^2+\Big(\sum_{j\in\mathbb{N}^d}\nu_j^2\Big)^2\sum_{\ell\in \mathcal{I}^c}\mu_\ell.
\end{align*}

Taking the infimum over $|\mathcal{I}|\leq o\big(N/\sum_{j\in\mathbb{N}^d}\nu_j^2)$ we get that the left hand side of the preceding display is bounded from above by $o(E_0\|\Delta\hat{K}_{x,N}\|_2^2)+\tilde\delta_{N}$, concluding the proof of the statement.

\paragraph{Over-smoothing}
By the definition of credible sets and using the triangle inequality, we get that
\begin{align*}
P_0\left(f_0\in\hat{B}_{n,m,\gamma}(L)\right)&\leq P_0\left(\left\|\tilde{P}(f_0)\right\|_2\leq\left\|\hat{f}_n-\tilde{F}(f_0)\right\|_2+Lr_{N,\gamma}\right)\\
&\leq P_0\left(\left\|\tilde{P}(f_0)\right\|_2\leq2 \left\|\hat{f}_n-\tilde{F}(f_0)\right\|_2\right)+P_0\left(\left\|\tilde{P}(f_0)\right\|_2\leq 2Lr_{N,\gamma}\right)
\end{align*}
and we show below that both probabilities on the right hand side tend to zero.

The first term disappears in view of assumption \eqref{cond:biasLB} and \eqref{hulp:new:star}. For the second term note, that in view of Markov's inequality and $P_0(\|W_N\|_2^2\geq r_{N,\gamma}^2|\mathbb{X})=\gamma$, where $W_N$ is a centered GP with covariance kernel $\hat{C}_N$, we have $\gamma r_{N,\gamma}^2\leq E[\|W_N\|_2^2|\mathbb{X}]$. Then
\begin{align}
P_0(2Lr_{N,\gamma}^2\geq \|\tilde{P}(f_0)\|^2_2)&\leq P_0\Big( E[\|W_N\|_2^2|\mathbb{X}]\geq \frac{\gamma}{2L}\|\tilde{P}(f_0)\|^2_2\Big)\nonumber\\
&\leq\frac{2LE_0(\int_{\mathcal{X}}\Var(f(x)|\mathbb{D}_N)dx)}{\gamma  \|\tilde{P}(f_0)\|^2_2}.\label{eq: hulp:var2}
\end{align}
The expectation in the numerator, known as the \textit{learning curve}, is of order $(\sigma^2/N)\sum_{j\in\mathbb{N}^d}\nu_j$ according to Lemma \ref{lem: bound exp post var}; thus for all $L>0$ not depending on $N$ the right hand side of the preceding display goes to $0$  in view of assumption \eqref{cond:biasLB}. 

\subsubsection{Distributed setting}
\textbf{Preliminary results.} We start by introducing the distributed version of the notations introduced in Section \ref{sec:nondistr:cov}. The aggregated posterior covariance function is $\hat{C}^I_{n,m}(x,x')=m^{-2}\sum_{k=1}^m\hat{C}_{n}^{I,(k)}(x,x')$, where the local posterior covariance functions can be given as 
$\hat{C}_{n}^{I,(k)}(x,x')=K_x^I(x')-\hat{K}_{x,n}^{I,(k)}(x')$ with 
\begin{align*}
\hat{K}_{x,n}^{I,(k)}(\cdot)&=K^I(\cdot,\mathbb{X}^{(k)}_n)[K^I(\mathbb{X}^{(k)}_n,\mathbb{X}^{(k)}_n)+\sigma^2I_n]^{-1}K^I(\mathbb{X}^{(k)}_n,x)\\
&=mK(\cdot,\mathbb{X}^{(k)}_n)[K(\mathbb{X}^{(k)}_n,\mathbb{X}^{(k)}_n)+m^{-1}\sigma^2I_n]^{-1}K(\mathbb{X}^{(k)}_n,x).
\end{align*}
Then in view of \eqref{eq: cov:KRR},
 \begin{align*}
m^{-1}\hat{K}_{x,n}^{I,(k)}&
=\arg\min_{g\in\mathcal{H}}\frac{1}{n}\Big[\sum_{i=1}^n(K_x(X_i^{(k)})-g(X_i^{(k)}))^2+\frac{\sigma^2}{m}\|g\|_{\mathcal{H}}^2\Big].
\end{align*}
For convenience let us introduce the notation $\tilde{K}_{x,n}^{(k)}=m^{-1}\hat{K}_{x,n}^{I,(k)}$.
Then the corresponding score function (up to constant multipliers) is given by
\begin{align*}
\hat{S}_{K_x,n}^{I,(k)}(g)=n^{-1}\Big(\sum_{i=1}^n \big(K_x(X_i^{(k)})-g(X_i^{(k)})\big)K_{X_i^{(k)}}-\frac{\sigma^2}{m} g \Big)
\end{align*}
satisfying $\hat{S}_{K_x,n}^{I,(k)}( \tilde{K}_{x,n}^{(k)})=0$. Furthermore the expected value of the score function is
\begin{align*}
{S}_{K_x,n}^{I}(g)=E\hat{S}_{K_x,n}^{I,(k)}(g)= \int_{\mathcal{X}}\big(K_x(z)-g(z) \big)K_z dz-\frac{\sigma^2}{N}g={S}_{K_x,N}(g),
\end{align*}
hence ${S}_{K_x,n}^{I}(\tilde{F}(K_x))=0$.

Then similarly to the posterior mean in Section \ref{sec:nondistributed:KRR} the following assertions hold
\begin{align}
\Delta \tilde{K}_{x,n}^{(k)}&=  \tilde{K}_{x,n}^{(k)}-\tilde{F}(K_x)=-\tilde{F}\circ F^{-1} \circ {S}_{K_x,n}^{I}( \tilde{K}_{x,n}^{(k)}),\nonumber\\
\hat{S}_{K_x,n}^{I,(k)}(\tilde{F}(K_x))&=\frac{1}{n}\Big( \sum_{i=1}^n \tilde{P}(K_x)(X_i^{(k)})K_{X_i^{(k)}}-\frac{\sigma^2}{m} \tilde{F}(K_x) \Big),\label{eq:cov:help3:distr}\\
F\circ \tilde{F}^{-1}&(\Delta \tilde{K}_{x,n}^{(k)})-\hat{S}_{K_x,n}^{I,(k)}( \tilde{F}(K_x))\nonumber \\
&= - \frac{1}{n} \sum_{i=1}^n \Delta \tilde{K}_{x,n}^{(k)}(X_i^{(k)})K_{X_i^{(k)}}+\int_{\mathcal{X}} \Delta \tilde{K}_{x,n}^{(k)}(x')K_{x'} dx'. \label{eq:cov:help4:distr}
\end{align}

\textbf{Main assertions.} Similarly to the nondistributed case in Section \ref{sec:nondistr:cov}, for the coverage of the credible sets it is sufficient to show that
\begin{align}
\label{eq: bound radius2 - dist}
{P_0\Big(}r_{n,m}^2(\gamma)\geq C_2\frac{\sigma^2}{N}\sum_{j\in\mathbb{N}^d}\nu_j{\Big)\to 1},\\
\label{eq: bound var - dist}
P_0\big(\|\hat{f}_{n,m}-\tilde{F}(f_0)\|_2^2\leq L_N\frac{\sigma^2}{N}\sum_{j\in\mathbb{N}^d}\nu_j\big)\to 1,
\end{align}
where the radius $r_{n,m}(\gamma)$ is defined as $P(\|W_{n,m}\|_2^2\leq r_{n,m}^2(\gamma)|\mathbb{X})=1-\gamma$ and $W_{n,m}$ is a centered GP with the same covariance kernel as $\Pi^{\dagger}(.|\mathbb{D}_N)$. Furthermore, the lack of coverage under \eqref{cond:biasLB} follows from
\begin{align}
\label{eq: bound radius1 - dist}
P_0\big(Lr_{n,m}^2(\gamma)\geq \|\tilde{P}(f_0)\|^2_2\big)\rightarrow 0.
\end{align}
We prove below the above assertions.\\

\textbf{Proof of \eqref{eq: bound radius2 - dist}:} 
Similarly to the proof of \eqref{eq: bound radius - undersmth} we get by Chebyshev's inequality that
$$r_{n,m}^2(\gamma)\geq E[\|W_{n,m}\|_2^2|\mathbb{X}]-(1-\gamma)^{-1/2}\Var(\|W_{n,m}\|_2^2|\mathbb{X})^{1/2}.$$
Then in view of 
\begin{align}
\Var^I_{n,m}(f(x))=m^{-2}\sum_{k=1}^m\Var^I(f(x)|\mathbb{D}_{n}^{(k)}),\qquad \text{for all $x\in\mathcal{X}$},\label{eq:var:distr}
\end{align}
 and Lemma \ref{lem: bound exp post var} it holds almost surely that
\begin{align}
E[\|W_{n,m}\|_2^2|\mathbb{X}]=\int_{x\in\mathcal{X}}\Var^I_{n,m}(f(x)) dx \gtrsim\frac{\sigma^2}{N}\sum_{j\in\mathbb{N}^d}\nu_j.\label{eq: hulp2}
\end{align}

Furthermore, as in \eqref{eq: help2},
$$\Var(\|W_{n,m}\|_2^2|\mathbb{X})=2\int_{\mathcal{X}}\|\hat{C}^I_{n,m}(x,.)\|_2^2dx.$$
Recall that the covariance function $\hat{C}^I_{n,m}(x,x')=m^{-2}\sum_{k=1}^m\hat{C}_{n}^{I,(k)}(x,x')$. Then in view of $(\sum_{i=1}^m a_i)^2\leq m\sum_{i=1}^m a_i^2$,
\begin{align*}
\|\hat{C}^I_{n,m}(x,.)\|_2^2=\Big\|m^{-2}\sum_{k=1}^m\hat{C}_{n}^{I,(k)}(x,.)\Big\|_2^2
\leq m^{-3}\sum_{k=1}^m\|\hat{C}_{n}^{I,(k)}(x,.)\|_2^2.
\end{align*}
We show below that
\begin{align}
E_0\|\hat{C}_{n}^{I,(k)}(x,.)\|_2^2&\lesssim m^2\big(\|\tilde{P}(K_x)\|_2^2+\tilde\delta_{N}\big).\label{eq: bound post_cov:dist}
\end{align}
for $\tilde\delta_{N}= \inf\{ (\sum_{j\in\mathbb{N}^d}\nu_j^2)^2 \sum_{\ell\in \mathcal{I}^c}\mu_\ell:\, |\mathcal{I}|\leq n/(m\sum_{j\in\mathbb{N}^d}\nu_j^2)\}$ similarly to the non-distributed case. Then in view of assertion \eqref{eq:tilde:P:K}, the variance of $\|W_{n,m}\|_2^2$, similarly to \eqref{eq: help3}, is bounded from above by
\begin{align*}
E_0\Var(\|W_{n,m}\|_2^2|\mathbb{X})&=2\int_{\mathcal{X}}E_0\|\hat{C}^I_{n,m}(x,.)\|_2^2dx\\
&\lesssim \Big(\int_{\mathcal{X}}\|\tilde{P}(K^I_x)\|_2^2dx+\tilde\delta_{N}\Big)\\
&=\frac{\sigma^4}{N^2}\sum_{j\in\mathbb{N}^d}\nu_j^2+\tilde\delta_{N}.
\end{align*}
Hence for all $t>0$ we get by Markov's inequality and Lemmas \ref{lem: bound eigen poly} and \ref{lem: bound eigen expo} that
\begin{align*}
P_{0}\Big(\Var(\|W_{n,m}\|_2^2|\mathbb{X})& \geq t \frac{\sigma^4}{N^2}\big(\sum_{j\in\mathbb{N}^d}\nu_j\big)^2\Big)\\
&\lesssim t^{-2}\Big(\frac{\sum_{j\in\mathbb{N}^d}\nu_j^2}{ \big(\sum_{j\in\mathbb{N}^d}\nu_j\big)^2}+\frac{\tilde\delta_{N} N^2}{ \sigma^4\big(\sum_{j\in\mathbb{N}^d}\nu_j\big)^2}\Big)=o(1).
\end{align*}
Hence with $P_{0}$-probability tending to one $E[\|W_{n,m}\|_2^2|\mathbb{X}]$ is of higher order than $\Var (\|W_{n,m}\|_2^2)^{1/2}$. Therefore, the quantiles of $\|W_{n,m}\|_2^2$ are of the order $(\sigma^2/N)\sum_{j\in\mathbb{N}^d}\nu_j$ {with $P_0$-probability going to one}, including $r_{n,m}^2(\gamma)$.\\

\textbf{Proof of \eqref{eq: bound post_cov:dist}:} We adapt the proof of \eqref{eq: bound post_cov} to the distributed setting. First note that
\begin{align*}
\|\hat{C}_{n}^{I,(k)}(x,.)\|_2^2&\lesssim m^2\Big( \|\tilde{P}(K_x)\|_2^2+ \| \Delta \tilde{K}_{x,n}^{(k)}- \tilde{F}\circ F^{-1} \circ\hat{S}_{K_x,n}^{I,(k)}(\tilde{F}(K_{x}))  \|_2^2\\
&\qquad\quad +\|  \tilde{F}\circ F^{-1}\circ \hat{S}_{K_x,n}^{I,(k)}(\tilde{F}(K_{x}))  \|_2^2\Big),
\end{align*}
where $\Delta \tilde{K}_{x,n}^{(k)}=\hat{K}_{n,x}^{I,(k)}/m-\tilde{F}(K_x)$. Then, in view of \eqref{eq:cov:help3:distr}, we get that
\begin{align*}
E_0\|  \tilde{F}&\circ F^{-1}\circ \hat{S}_{K_x,n}^{I,(k)}(\tilde{F}(K_{x}))  \|_2^2\\ 
&=E_0\Big\|  \tilde{F}\circ F^{-1}\Big(\hat{S}_{K_x,n}^{I,(k)}( \tilde{F}\big( K_{x}))- {S}_{K_x,n}^{I,(k)}( \tilde{F}\big( K_{x})) \Big) \Big \|_2^2\\ 
&= E_0\Big\|\tilde{F} \circ F^{-1}\Big(\frac{1}{n} \sum_{i=1}^{n}\tilde{P}(K_x)(X_i^{(k)})K_{X_i^{(k)}}-E_X[\tilde{P}(K_x)(X)K_{X}]\Big)\Big\|_2^2\\
&\leq
\Big(\frac{1}{n}\sum_{j\in\mathbb{N}^d}\nu_j^2\Big)\|\tilde{P}(K_x)\|_2^2=o( \|\tilde{P}(K_x)\|_2^2),
\end{align*}
where the penultimate inequality follows from Lemma \ref{lem: bound} with $g=\tilde{P}(K_x)$.

Furthermore, similarly to the proof in Section \ref{sec:proof: hulp:var1}, by using assertion \eqref{eq:cov:help4:distr}, Lemma \ref{lem:bound:random} (with $\hat{g}^{(k)}=\Delta \tilde{K}_{x,n}^{(k)}$ and sample size $n$) and Lemma \ref{lem:cov:UB:help}, we can show that for all $x\in\mathcal{X}$
\begin{align*}
&E_0\Big\|\Delta \tilde{K}_{x,n}^{(k)}- \tilde{F} \circ F^{-1}\circ \hat{S}_{K_x,n}^{I,(k)}(\tilde{F}(K_{x}))\Big\|_2^2\nonumber\\
&\qquad= E_0\Big\|(\tilde{F}\circ F^{-1})\Big(\frac{1}{n}\sum_{i=1}^{n}\Delta \tilde{K}_{x,n}^{(k)}(X_i^{(k)})K_{X_i^{(k)}}-\int_{\mathcal{X}}\Delta \tilde{K}_{x,n}^{(k)}(x')K_{x'}dx'\Big)\Big\|_2^2\nonumber\\
&\qquad\lesssim \frac{|\mathcal{I}|\sum_{j\in\mathbb{N}^d}\nu_j^2}{n}E_0\|\Delta \tilde{K}_{x,n}^{(k)}\|_2^2+\tilde\delta_{N}.
\end{align*}
Taking the infimum over $|\mathcal{I}|\leq o\big(n/\sum_{j\in\mathbb{N}^d}\nu_j^2\big)$ we get that the left hand side of hte preceding display is bounded from above by $o(E_0\|\Delta \tilde{K}_{x,n}^{(k)}\|_2^2)+\tilde\delta_{N}$. We conclude the proof of \eqref{eq: bound post_cov:dist} by combining the above three displays.\\

\textbf{Proof of \eqref{eq: bound var - dist}:}  Exactly the same as the proof of \eqref{eq: bound var - undersmth}.\\

\textbf{Proof of \eqref{eq: bound radius1 - dist}:} Similarly to assertion \eqref{eq: hulp:var2} we get in view of \eqref{eq:var:distr} and Lemma \ref{lem: bound exp post var} in the case where assumption \eqref{cond:biasLB} holds
\begin{align*}
P_0\big(L r_{n,m}^2(\gamma)\geq \|\tilde{P}(f_0)\|^2_2\big)&\leq 
\frac{2L {E_0}\int_{\mathcal{X}}\Var_{n,m}(f(x))dx.}{\gamma  \|\tilde{P}(f_0)\|^2_2}\\
&\lesssim \frac{m^{-1}\sigma^2 \sum_{j\in \mathbb{N}^d}\frac{m\mu_j}{\sigma^2+nm\mu_j}}{\|\tilde{P}(f_0)\|^2_2}\\
&= \frac{N^{-1}\sigma^2 \sum_{j\in \mathbb{N}^d}\nu_j}{\|\tilde{P}(f_0)\|^2_2}=o(1).
\end{align*}



 


\section{Proof of the Corollaries}\label{sec:corollaries}

\subsection{Proof of Corollary \ref{cor: contraction:poly}}\label{sec: proof_contraction:poly}
First note that for any $\mathcal{N}\subset\mathbb{N}^d$
\begin{align}
\|\tilde{P}(f_0)\|_2^2&=\sum_{j\in\mathbb{N}^d}(1-\nu_j)^2f_{0,j}^2=\sum_{j\in\mathbb{N}^d}\frac{\sigma^4}{(\sigma^2+\mu_j N)^2}f_{0,j}^2\nonumber\\
&\leq(N/\sigma^2)^{-2}\sum_{j\in\mathcal{N}}\frac{1}{\mu_j^2}f_{0,j}^2+\sum_{j\in\mathbb{N}^d/\mathcal{N}}f_{0,j}^2.\label{eq: cov:poly:pos:help}
\end{align}

Consider  eigenvalues satisfying \eqref{assump:poly} with $\alpha=\beta$, i.e. $\mu_j\asymp\big(\prod_{i=1}^d j_i\big)^{-2\beta/d-1}$. Let us take $\mathcal{N}=\{j\in\mathbb{N}^d:\Pi_{i=1}^d j_i\leq J_{\beta}\}$ with $J_{\beta}:=(N/\sigma^2)^{\frac{d}{2\beta+d}}$ and note that  in view of \eqref{def:small_index} [with $I=J_{\beta}$] we have
\begin{align}
|\mathcal{N}|\lesssim J_{\beta}\log^{d-1} J_\beta=o(N)\label{eq:UB:size}
\end{align}
Furthermore, we also get that
\begin{align*}
\sup_{f_0\in \Theta^{\beta}(B)}\|\tilde{P}(f_0)\|_2^2
&\lesssim \sup_{f_0\in \Theta^{\beta}(B)}\Bigg[\frac{\sigma^4}{N^2}\max_{j\in\mathcal{N}}\left(\prod_{i=1}^d j_i\right)^{4\beta/d+2}\left(\sum_{i=1}^d j_i^2\right)^{-\beta}\sum_{j\in\mathcal{N}} \left(\sum_{i=1}^d j_i^2\right)^{\beta}f_{0,j}^2\\
&\qquad\qquad\qquad+\sup_{j\notin\mathcal{N}}\left(\sum_{i=1}^d j_i^2\right)^{-\beta}  \sum_{j\notin\mathcal{N}} \left(\sum_{i=1}^d j_i^2\right)^{\beta}f_{0,j}^2\Bigg]\\
&\lesssim\left(\frac{N}{\sigma^2}\right)^{-2}J_{\beta}^{2\beta/d+2}B^2+J_{\beta}^{-2\beta/d}B^2\\
&\lesssim(N/\sigma^2)^{\frac{-2\beta}{2\beta+d}},
\end{align*}
using Lemmas \ref{lem: bound-maxfunc} [with $r=4\beta/d+2$, $s=2\beta$ and $J=J_{\beta}$] and \ref{lem: bound-minfunc} [with $s=2\beta$ and $J=J_{\beta}$].

Moreover, in view of Lemma \ref{lem: bound eigen poly} and $\nu_j\leq 1$
$$\frac{\sigma^2}{N}\sum_{j\in\mathbb{N}^d}\nu_j^2\asymp  \frac{\sigma^2}{N}J_{\beta}\left(\log J_{\beta}\right)^{d-1}=(N/\sigma^2)^{\frac{-2\beta}{2\beta+d}}\left(\log(N/\sigma^2)\right)^{d-1}.$$

Finally we show that the remaining term is $\delta_{N}=o(N^{\frac{-2\beta}{d+2\beta}})$. Let us take
$$\mathcal{I}=\left\{ j\in\mathbb{N}^d:\, \prod_{i=1}^dj_i\leq I \right\}\qquad \text{with }I=\frac{N}{m^2\log^{d-1}(n)}\Big(\sum_{j\in\mathbb{N}^d}\nu_j^2\Big)^{-1},$$
where $\frac{N}{m^2\log^{d-1}(n/m)}\Big(\sum_{j\in\mathbb{N}^d}\nu_j^2\Big)^{-1}\geq 1$ holds because $m$ is small enough. Note that in view of Lemma \ref{lem:card} the cardinality of $\mathcal{I}$ satisfies $|\mathcal{I}|\lesssim \frac{n}{m}\Big(\sum_{j\in\mathbb{N}^d}\nu_j^2\Big)^{-1}$, hence it satisfies the cardinality assumption on $\mathcal{I}$. Then in view of Lemma \ref{lem:card} and Lemma \ref{lem: bound eigen poly}
\begin{align*}
\delta_{N}&\lesssim N\sum_{j\in\mathbb{N}^d}\nu_j^2 \sum_{ \ell:\, \prod_{i=1}^d \ell_i>I}\mu_\ell\lesssim N\sum_{j\in\mathbb{N}^d}\nu_j^2 I^{-2\beta/d}\log^{d-1}I\\
&\ll N^{1-2\beta/d}m^{4\beta/d}\big(\sum_{j\in\mathbb{N}^d}\nu_j^2\big)^{2\beta/d+1}(\log N)^{2\beta+d-1}\\
&\lesssim N^{2-2\beta/d}m^{4\beta/d}(\log N)^{2\beta+d-1}.
\end{align*}
The right hand side is of order $o(N^{-\frac{2\beta}{d+2\beta}})$ for all $m=o(N^{\frac{2\beta-3d}{4\beta}})$ with $\beta> 3d/2$. Combining the above inequality with Theorem \ref{thm: contr:general} concludes the proof for the polynomially decaying eigenvalues.

\subsection{Proof of Corollary \ref{cor: contraction:exp}}\label{sec: proof_contraction:exp}
For arbitrary index set $\mathcal{N}\subset \mathbb{N}^d$ we get that
\begin{align}
\sup_{f_0\in\Theta^{\beta}(B)} \|\tilde{P}(f_0)\|_2^2
&\leq \sup_{f_0\in\Theta^{\beta}(B)} \Bigg[\frac{\sigma^4}{N^2}\max_{j\in\mathcal{N}}\left(\sum_{i=1}^d j_i^2\right)^{-\beta}e^{2a\sum_{i=1}^dj_i} \sum_{j\in\mathcal{N}} \left(\sum_{i=1}^d j_i^2\right)^{\beta}f_{0,j}^2\nonumber\\
&\qquad\qquad\qquad+\sup_{j\notin\mathcal{N}}\left(\sum_{i=1}^d j_i^2\right)^{-\beta}  \sum_{j\notin\mathcal{N}} \left(\sum_{i=1}^d j_i^2\right)^{\beta}f_{0,j}^2\Big].\label{eq: help1}
\end{align}
We deal with the two terms on the right hand side separately. Note that the function $x\mapsto x^{-2\beta}e^{2ax}$ is convex on $[1,J_a]$, for $J_a=a^{-1}\log (N/\sigma^2)$ with $a\leq 1$, and achieves its maximum at one of the end points. Let us take the set $\mathcal{N}=\{j\in\mathbb{N}^d:\,\sum_{i=1}^d j_i\leq J_{a}\}$ and note that 
\begin{align}
|\mathcal{N}|\leq a^{-d}\log^d N=o(N),\label{eq:cond:exp:size}
\end{align}
by the lower bound on $a$. Furthermore, by noting that $(\sum_{i=1}^d j_i)^2\leq d\sum_{i=1}^d j_i^2$, the 
maximum of the last display over $\mathcal{N}$ is bounded from above by
$$\max_{j\in\mathcal{N}}\left(\sum_{i=1}^d j_i^2\right)^{-\beta}e^{2a\sum_{i=1}^dj_i}
\lesssim \max_{j\in\mathcal{N}}\left(\sum_{i=1}^d j_i\right)^{-2\beta}e^{2a\sum_{i=1}^dj_i}
\lesssim 1+J_a^{-2\beta}e^{2aJ_a}.$$

The second term in \eqref{eq: help1} is directly bounded from above by $J_a^{-2\beta}B^2$.
Therefore, by combining the inequalities above, 

\begin{align}
\|\tilde{P}(f_0)\|_2^2\lesssim\frac{\sigma^4}{N^2}+\left(a^{-1}\log (N/\sigma^2)\right)^{-2\beta}.\label{eq: UBbias}
\end{align}
Moreover, in view of Lemma \ref{lem: bound eigen expo}
\begin{align}
\frac{\sigma^2}{N}\sum_{j\in\mathbb{N}^d}\nu_j\asymp\frac{\sigma^2}{N}J_a^d=\frac{\sigma^2}{N}a^{-d}\log^{d}(N/\sigma^2).\label{eq: help:UBbias}
\end{align}
For $a:=(N/\sigma^2)^{-\frac{1}{2\beta+d}}\log(N/\sigma^2)$ both of the preceding displays are bounded from above by a multiple of $(N/\sigma^2)^{-\frac{2\beta}{2\beta+d}}$.

Finally, we show that the remainder term $\delta_{N}$ is of lower order than $(N/\sigma^2)^{-\frac{2\beta}{2\beta+d}}$. We take $\mathcal{I}=\{j\in\mathbb{N}^d:\,\sum_{i=1}^d j_i\leq I\}$, with $I=N^{1/d}\big(m^2\sum_{j\in\mathbb{N}^d}\nu_j^2\big)^{-1/d}$. Then it is easy to see that $|\mathcal{I}|\leq I^d\leq N\big(m^2\sum_{j\in\mathbb{N}^d}\nu_j^2\big)^{-1}$ holds. Note that $|\mathcal{I}|\geq 1$ holds because $m$ is small enough. Furthermore, in view of the upper bound $p(i,d)\leq \frac{1}{2}{i-1 \choose d-1}+1/2\leq i^d$ on the $d$ partition of $i\in\mathbb{N}$
, we get that
\begin{align}
\delta_{N}&\leq n\sum_{j\in\mathbb{N}^d}\nu_j^2\sum_{\ell\in\mathcal{I}^c}\mu_{\ell}
\leq n\sum_{j\in\mathbb{N}^d}\nu_j^2 \sum_{i\geq I} i^d e^{-ai}\nonumber\\
&\lesssim nI^de^{-aI}\sum_{j\in\mathbb{N}^d}\nu_j^2\lesssim \left(\frac{n}{m}\right)^{2}e^{-aI}(\log n)^{-1}.\label{eq: remainder_exp}
\end{align}
Since $\beta\geq d/2$, we have
\begin{align*}
a I=\left(\frac{N}{\sigma^2}\right)^{-\frac{1}{2\beta+d}}\log\left(\frac{N}{\sigma^2}\right)N^{\frac{1}{d}}m^{-2/d}\big(\sum_{j\in\mathbb{N}^d}\nu_j^2\big)^{-1/d}\\
\gtrsim N^{\frac{2\beta-d}{d(2\beta+d)}}m^{-2/d}\log N\geq L\log N.
\end{align*}
Hence the right hand side of \eqref{eq: remainder_exp} is $o(N^{-L})$, for arbitrary $L>0$, when $m=o(N^{\frac{\beta-d/2}{2\beta+d}})$ concluding the proof of the corollary using Theorem \ref{thm: contr:general}.

 \subsection{Proof of Corollary \ref{cor: coverage:poly}}\label{sec:cor: coverage:poly}
We proceed by proving that the conditions of Theorem \ref{th: unc quant} hold for this choice of the kernel and the parameters, which directly provides us the statements.

Let us take $\mathcal{N}=\{j\in\mathbb{N}^d:\prod_{i=1}^d j_i\leq J_{\alpha}\}$ with $J_{\alpha}:=(N/\sigma^2)^{1/(d+2\alpha)}$ in $\eqref{eq: cov:poly:pos:help}$. The cardinality of this set is $o(N)$, see \eqref{eq:UB:size}. Furthermore, in view of $\alpha\leq\beta$,
\begin{align*}
\sup_{f_0\in \Theta^{\beta}(B)}\|\tilde{P}(f_0)\|_2^2
&\lesssim \sup_{f_0\in \Theta^{\beta}(B)}\Bigg[\frac{\sigma^4}{N^2}\max_{j\in\mathcal{N}}\big(\prod_{i=1}^d j_i\big)^{4\alpha/d+2}\big(\sum_{i=1}^dj_i^2\big)^{-\beta}\sum_{j\in\mathcal{N}} \big(\sum_{i=1}^dj_i^2\big)^{\beta}f_{0,j}^2\\
&\qquad\qquad\qquad+\sup_{j\notin\mathcal{N}}\big(\sum_{i=1}^dj_i^2\big)^{-\beta}  \sum_{j\notin\mathcal{N}} \big(\sum_{i=1}^dj_i^2\big)^{\beta}f_{0,j}^2\Bigg]\\
&\lesssim\left(\frac{N}{\sigma^2}\right)^{-2}J_{\alpha}^{4\alpha/d-2\beta/d+2}B^2+J_{\alpha}^{-2\beta/d}B^2\lesssim(N/\sigma^2)^{\frac{-2\beta}{2\alpha+d}}.
\end{align*}

Then, in view of Lemma \ref{lem: bound eigen poly}, $\nu_j\leq 1$ and the preceding display,
\begin{align*}
\frac{\sigma^2}{N}\sum_{j\in\mathbb{N}^d}\nu_j&\asymp  \frac{\sigma^2}{N}J_{\alpha}\left(\log J_{\alpha}\right)^{d-1}
=\left(\frac{N}{\sigma^2}\right)^{-\frac{2\alpha}{2\alpha+d}}\left(\log\left(\frac{N}{\sigma^2}\right)\right)^{d-1}\gtrsim \sup_{f_0\in \Theta^{\beta}(B)}\|\tilde{P}(f_0)\|_2^2,
\end{align*}
when $\alpha\leq\beta$. Finally in view of Corollary \ref{cor: contraction:poly} we have that 
$$\delta_{N}=o\left((N/\sigma^2)^{-\frac{2\alpha}{2\alpha+d}}\right)=o\Big( \frac{\sigma^2}{N}\sum_{j\in\mathbb{N}^d}\nu_j\Big),$$
finishing the proof of the corollary.

 \subsection{Proof of Corollary \ref{cor: coverage:exp}}\label{sec:cor: coverage:exp}
We again prove that the conditions of Theorem \ref{th: unc quant} hold in this setting.

In view of assertions \eqref{eq: UBbias}  and \eqref{eq: help:UBbias}, we get for $a\lesssim\big(\frac{\sigma}{N}\big)^{1/(2\beta+d)}\log\big(\frac{N}{\sigma}\big)$ that
\begin{align*}
\|\tilde{P}(f_0)\|_2^2\lesssim \frac{\sigma^2}{N}\sum_{j\in\mathbb{N}^d}\nu_j.
\end{align*}

 Furthermore, the cardinality of the set $\{j\in \mathbb{N}^{d}:\,  \mu_j N \geq \sigma^2\}$ is $o(N)$, see \eqref{eq:cond:exp:size}. Finally, in view of Corollary \ref{cor: contraction:exp}, $\delta_{N}=o(N^{-c})$, hence the condition $\delta_{N}=o\Big( \frac{\sigma^2}{N}\sum_{j\in\mathbb{N}^d}\nu_j\Big)$ of Theorem \ref{th: unc quant} also holds, concluding the proof.

\appendix
\section{Technical lemmas}\label{sec: lemmas}

\begin{lemma}\label{lem: bound}
Consider the local regression problem \eqref{eq: nonpara reg dist} for arbitrary $k\in\{1,...,m\}$ and let $g\in L_2(\mathcal{X})$. Then there exists a universal constant $C$ not depending on $g$ such that
\begin{align}
\label{eq: bound lemma}
E_0\Big\|(\tilde{F}\circ F^{-1})\Big(\frac{1}{n}\sum_{i=1}^ng(X_i^{(k)})K_{X_i^{(k)}}-E_X[g(X)K_X]\Big)\Big\|_2^2\leq\frac{C}{n}\|g\|_2^2\sum_{j\in\mathbb{N}^d}\nu_j^2,
\end{align}
where $X$ is a uniform random variable on $\mathcal{X}$, and $\nu_j$'s are the eigenvalues of the operator $\tilde{F}$.
\end{lemma}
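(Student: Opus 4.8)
The plan is to diagonalise the operator $\tilde F\circ F^{-1}$ in the orthonormal basis $\{\psi_j\}_{j\in\mathbb{N}^d}$, which reduces the claim to a standard bound on the variance of an average of i.i.d.\ Hilbert-space valued random elements.

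First I would record the action of the operator: since $K_x=\sum_{j\in\mathbb{N}^d}\mu_j\psi_j(x)\psi_j$ and both $F^{-1}$ and $\tilde F$ act diagonally on $\{\psi_j\}$, one has $(\tilde F\circ F^{-1})(K_x)=\sum_{j\in\mathbb{N}^d}\nu_j\psi_j(x)\psi_j$ for every $x\in\mathcal{X}$. By Assumption \ref{ass: bound eigenf}, $\|(\tilde F\circ F^{-1})(K_x)\|_2^2=\sum_{j\in\mathbb{N}^d}\nu_j^2\psi_j(x)^2\le C_\psi^2\sum_{j\in\mathbb{N}^d}\nu_j^2$, so this is a genuine element of $L_2(\mathcal{X})$ (if $\sum_j\nu_j^2=\infty$ the asserted bound is vacuous, so we may assume this sum finite). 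Setting $\xi_i:=(\tilde F\circ F^{-1})\big(g(X_i^{(k)})K_{X_i^{(k)}}\big)=g(X_i^{(k)})\sum_{j\in\mathbb{N}^d}\nu_j\psi_j(X_i^{(k)})\psi_j$, the elements $\xi_1,\dots,\xi_n$ are i.i.d.\ since the $X_i^{(k)}$ are i.i.d.\ uniform on $\mathcal{X}$, and by linearity of $\tilde F\circ F^{-1}$ the argument of the norm in \eqref{eq: bound lemma} equals $\frac1n\sum_{i=1}^n(\xi_i-E_0\xi_i)$.

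Next I would invoke the elementary identity $E_0\big\|\frac1n\sum_{i=1}^n(\xi_i-E_0\xi_i)\big\|_2^2=\frac1n E_0\|\xi_1-E_0\xi_1\|_2^2\le\frac1n E_0\|\xi_1\|_2^2$, valid for i.i.d.\ square-integrable random elements of a Hilbert space (the cross terms vanish by independence). By Parseval's identity $\|\xi_1\|_2^2=g(X_1^{(k)})^2\sum_{j\in\mathbb{N}^d}\nu_j^2\psi_j(X_1^{(k)})^2$; bounding $\psi_j(\cdot)^2\le C_\psi^2$, exchanging sum and expectation by Tonelli's theorem, and using that $X_1^{(k)}$ is uniform on $\mathcal{X}=[0,1]^d$ so that $E_0[g(X_1^{(k)})^2]=\int_{\mathcal{X}}g(x)^2\,dx=\|g\|_2^2$, I obtain $E_0\|\xi_1\|_2^2\le C_\psi^2\big(\sum_{j\in\mathbb{N}^d}\nu_j^2\big)\|g\|_2^2$. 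Combining the last two estimates gives \eqref{eq: bound lemma} with $C=C_\psi^2$, uniformly over $g\in L_2(\mathcal{X})$.

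There is no substantial obstacle here — the lemma is a variance computation — so the only point requiring care is to treat $\tilde F\circ F^{-1}$ as a single operator: $F^{-1}(K_x)=\sum_j\psi_j(x)\psi_j$ need not belong to $L_2(\mathcal{X})$, whereas the extra factor $\nu_j\le 1$ present in $(\tilde F\circ F^{-1})(K_x)$, together with the boundedness of the eigenfunctions, is precisely what makes the composite image square-integrable and the whole computation rigorous.
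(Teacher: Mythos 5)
Your proposal is correct and follows essentially the same route as the paper: diagonalise $\tilde F\circ F^{-1}$ in the eigenbasis and reduce \eqref{eq: bound lemma} to the variance of an i.i.d.\ average, using Assumption \ref{ass: bound eigenf} and $E_0[g(X)^2]=\|g\|_2^2$. The only cosmetic difference is that you handle the centering via $E_0\|\xi_1-E_0\xi_1\|_2^2\leq E_0\|\xi_1\|_2^2$ at the level of the Hilbert-space-valued summands, whereas the paper works coordinatewise and uses $(a+b)^2\leq 2a^2+2b^2$, so you even obtain the slightly cleaner constant $C_\psi^2$ in place of $2(C_\psi^2+1)$.
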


\begin{proof}
For simplicity we omit the reference to the local $k$ machine in the proof by writing $X_i=X_i^{(k)}$. Let $g=\sum_{j\in\mathbb{N}^d}g_j\psi_j\in L_2(\mathcal{X})$. Since $g(X)K_X=\sum_{j,k\in\mathbb{N}^d}\mu_jg_k\psi_j(X)\psi_k(X)\psi_j$ and  $(\psi_j)_{j\in\mathbb{N}^d}$ is an orthonormal basis of $L_2(\mathcal{X})$, we have $E_X[g(X)K_X]=\sum_{j\in\mathbb{N}^d}\mu_jg_j\psi_j.$
Furthermore, the linearity of the operator $\tilde{F}\circ F^{-1}$ implies that $\tilde{F}\circ F^{-1}(g(X)K_{X})=\sum_{j,k\in\mathbb{N}^d}\nu_jg_k\psi_j(X)\psi_k(X)\psi_j$, providing
\begin{align}
\tilde{F}\circ F^{-1}\big(E_X[g(X)K_X]\big)&=\sum_{j\in\mathbb{N}^d}\nu_jg_j\psi_j,\nonumber\\
\tilde{F}\circ F^{-1}\Big(\frac{1}{n}\sum_{i=1}^ng(X_i)K_{X_i}\Big)&=\frac{1}{n}\sum_{i=1}^n\tilde{F}\circ F^{-1}(g(X_i)K_{X_i})\nonumber\\
&=\frac{1}{n}\sum_{i=1}^n\sum_{j,k\in\mathbb{N}^d}\nu_jg_k\psi_j(X_i)\psi_k(X_i)\psi_j.\label{eq: help:tildf}
\end{align}
Then using the inequality $(a+b)^2\leq2(a^2+b^2)$ we get
\begin{align*}
&E_0\Big\|(\tilde{F}\circ F^{-1})\Big(\frac{1}{n}\sum_{i=1}^ng(X_i) K_{X_i}-E_X[g(X) K_X]\Big)\Big\|_2^2\\
&\qquad=E_0\Big\|\sum_{j,k\in\mathbb{N}^d}\nu_jg_k\psi_j \big(\frac{1}{n}\sum_{i=1}^n\psi_j(X_i)\psi_k(X_i)-\delta_{jk}\big)\Big\|_2^2\\
&\qquad=\sum_{j\in\mathbb{N}^d}\frac{\nu_j^2}{n} E_0\big(g(X_i)\psi_j(X_i)-g_j\big)^2\\
&\qquad\leq 2\sum_{j\in\mathbb{N}^d}\frac{\nu_j^2}{n} \Big(  E_0g^2(X_i)\psi_j^2(X_i) +g_j^2 \Big)\leq \frac{2(C_{\psi}^2+1)  \|g\|_2^2}{n} \sum_{j\in\mathbb{N}^d}\nu_j^2, 
\end{align*}
finishing the proof of the statement.

\end{proof}

\begin{lemma}\label{lem:bound:random}
Consider the local regression problem \eqref{eq: nonpara reg dist} for arbitrary $k\in\{1,...,m\}$. Then for any finite index set $\mathcal{I}\subset\mathbb{N}^d$, $|\mathcal{I}|\leq N^C$ and data dependent function $\hat{g}^{(k)}:\mathcal{X}^{n}\mapsto \mathbb{R}$,  $\|\hat{g}^{(k)}\|_2\leq N^C$, for some $C>0$,
\begin{align}
\label{eq: bound random lemma}
E_0&\Big\|(\tilde{F}\circ F^{-1})\Big(\frac{1}{n}\sum_{i=1}^{n} \hat{g}^{(k)}(X_i^{(k)})K_{X_i^{(k)}}-E_X[\hat{g}^{(k)}(X) K_X]\big)\Big)\Big\|_2^2\nonumber\\ 
&\lesssim\quad\frac{|\mathcal{I}| \log N}{n}\sum_{j\in\mathbb{N}^d}\nu_{j}^2E_0\|\hat{g}^{(k)}\|_2^2+ E_0 \| \hat{g}_{\mathcal{I}^c}^{(k)}\|_{\mathcal{H}}^2\sum_{j\in\mathbb{N}^d}\nu_{j}^2  \sum_{\ell  \in\mathcal{I}^c}\mu_\ell+N^{-C_0},
\end{align}
where $X$ is a uniform random variable on $\mathcal{X}$, $\nu_{j}$'s are the eigenvalues of the operator $\tilde{F}$, $C_0$ can be chosen arbitrarily large, and  $\hat{g}_{\mathcal{I}^c}^{(k)}(\cdot)=\sum_{j\in \mathcal{I}^c} \hat{g}_j^{(k)} \psi_j(\cdot)$.
\end{lemma}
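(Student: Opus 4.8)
The plan is to follow the argument of Lemma~\ref{lem: bound}, the only new feature being that $\hat g^{(k)}$ depends on the same design points $X_1^{(k)},\dots,X_n^{(k)}$ that drive the empirical average. Write $\hat g=\hat g^{(k)}$ and $\hat g_j=\langle\hat g,\psi_j\rangle$. Expanding in the eigenbasis exactly as in the proof of Lemma~\ref{lem: bound} (the algebraic identity there holds pointwise in the randomness) gives
\begin{align*}
&\Big\|(\tilde F\circ F^{-1})\Big(\tfrac1n\sum_{i=1}^n\hat g(X_i^{(k)})K_{X_i^{(k)}}-E_X[\hat g(X)K_X]\Big)\Big\|_2^2\\
&\qquad=\sum_{j\in\mathbb N^d}\nu_j^2\Big(\tfrac1n\sum_{i=1}^n\hat g(X_i^{(k)})\psi_j(X_i^{(k)})-\hat g_j\Big)^2 .
\end{align*}
I would split $\hat g=\hat g_{\mathcal I}+\hat g_{\mathcal I^c}$ along $\mathcal I$; using linearity of $g\mapsto\tfrac1n\sum_ig(X_i^{(k)})\psi_j(X_i^{(k)})-g_j$ and $(a+b)^2\le2a^2+2b^2$, the right-hand side is bounded by $2T_{\mathcal I}+2T_{\mathcal I^c}$, with $T_{\mathcal I},T_{\mathcal I^c}$ the analogous expressions built from $\hat g_{\mathcal I}$, resp.\ $\hat g_{\mathcal I^c}$.

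The high-frequency term $T_{\mathcal I^c}$ needs no concentration, only crude deterministic bounds. Since $(\hat g_{\mathcal I^c})_j$ vanishes on $\mathcal I$, the ``bias'' piece $\sum_j\nu_j^2(\hat g_{\mathcal I^c})_j^2=\sum_{j\in\mathcal I^c}\nu_j^2\mu_j(\hat g_j^2/\mu_j)$ is at most $(\sum_j\nu_j^2)(\sum_{\ell\in\mathcal I^c}\mu_\ell)\|\hat g_{\mathcal I^c}\|_{\mathcal H}^2$, using $\sup_j\nu_j^2\le\sum_j\nu_j^2$ and $\sup_{j\in\mathcal I^c}\mu_j\le\sum_{\ell\in\mathcal I^c}\mu_\ell$. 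For the ``empirical'' piece, Cauchy--Schwarz in $i$ (twice), $|\psi_j|\le C_\psi$, and the pointwise estimate $\hat g_{\mathcal I^c}(x)^2\le C_\psi^2\|\hat g_{\mathcal I^c}\|_{\mathcal H}^2\sum_{\ell\in\mathcal I^c}\mu_\ell$ give, for every realization of the design, $\sum_j\nu_j^2\big(\tfrac1n\sum_i\hat g_{\mathcal I^c}(X_i^{(k)})\psi_j(X_i^{(k)})\big)^2\le C_\psi^4(\sum_j\nu_j^2)(\sum_{\ell\in\mathcal I^c}\mu_\ell)\|\hat g_{\mathcal I^c}\|_{\mathcal H}^2$. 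Taking $E_0$ and adding the two pieces produces the second term of \eqref{eq: bound random lemma}.

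The main work is $T_{\mathcal I}$, where I would exploit that $\hat g_{\mathcal I}$ lies in the $|\mathcal I|$-dimensional space $V_{\mathcal I}=\mathrm{span}\{\psi_j:j\in\mathcal I\}$. Introduce the data-dependent operator $B_n\colon V_{\mathcal I}\to L_2(\mathcal X)$, $B_n(g)=\tfrac1n\sum_i(\Phi_i-E_X\Phi_i)(g)$ with the rank-one $\Phi_i(g)=(\tilde F\circ F^{-1})(g(X_i^{(k)})K_{X_i^{(k)}})=g(X_i^{(k)})\sum_j\nu_j\psi_j(X_i^{(k)})\psi_j$ and $E_X\Phi_1=\tilde F$; then $T_{\mathcal I}=\|B_n(\hat g_{\mathcal I})\|_2^2\le\|B_n\|_{\mathrm{op}}^2\|\hat g\|_2^2$ with $\|B_n\|_{\mathrm{op}}$ depending only on the design. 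Here $\|\Phi_i\|_{\mathrm{op}}\le C_\psi^2(|\mathcal I|\sum_j\nu_j^2)^{1/2}$ and $\|\tilde F\|_{\mathrm{op}}\le1$ are polynomial in $N$ (recall $|\mathcal I|\le N^C$ and $\sum_j\nu_j^2\le(N/\sigma^2)\mathrm{tr}(K)$), and a direct computation bounds the variance parameter of $\tfrac1n\sum_i(\Phi_i-E_X\Phi_i)$ entering the Bernstein bound by a constant multiple of $|\mathcal I|\sum_j\nu_j^2/n$. Applying the intrinsic-dimension form of the Bernstein inequality for sums of independent bounded operators — or, after a harmless truncation of the range of $B_n$ to a polynomial-in-$N$ number of eigencoordinates (the discarded tail of $\sum_j\nu_j^2$ contributing at most $N^{-C_0}\|\hat g\|_2^2$ deterministically), the finite-dimensional matrix Bernstein inequality — yields, for a suitable constant $c$ and any $C_1$, $P_0\big(\|B_n\|_{\mathrm{op}}^2>c\,|\mathcal I|n^{-1}\log N\sum_j\nu_j^2\big)\le N^{-C_1}$, the $\log N$ being the logarithm of the (polynomial) effective dimension. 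Finally I would split $E_0[\|B_n\|_{\mathrm{op}}^2\|\hat g\|_2^2]$ over this event and its complement, estimating the complement via $\|\hat g^{(k)}\|_2\le N^C$ and the deterministic polynomial bound on $\|B_n\|_{\mathrm{op}}^2$ to absorb it into $N^{-C_0}$; this gives $E_0T_{\mathcal I}\lesssim|\mathcal I|n^{-1}\log N\sum_j\nu_j^2\,E_0\|\hat g^{(k)}\|_2^2+N^{-C_0}$, and adding the bounds on $T_{\mathcal I}$ and $T_{\mathcal I^c}$ yields \eqref{eq: bound random lemma}. I expect this last step — pinning down the variance proxy, handling the infinite-dimensional range, and especially converting the high-probability operator-norm bound into an expectation bound using only the polynomial growth hypotheses $|\mathcal I|\le N^C$, $\|\hat g^{(k)}\|_2\le N^C$ — to be the only genuine obstacle; the rest is routine.
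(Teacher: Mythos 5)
Your proposal is correct, and the reduction is the same as the paper's up to the split $\hat g=\hat g_{\mathcal I}+\hat g_{\mathcal I^c}$: the eigenbasis identity $\sum_{j}\nu_j^2\big(\tfrac1n\sum_i\hat g(X_i^{(k)})\psi_j(X_i^{(k)})-\hat g_j\big)^2$ and the crude Cauchy--Schwarz/RKHS bound on the $\mathcal I^c$ part (producing $\sum_j\nu_j^2\sum_{\ell\in\mathcal I^c}\mu_\ell\,\|\hat g^{(k)}_{\mathcal I^c}\|_{\mathcal H}^2$) are exactly what the paper does. Where you genuinely diverge is the low-frequency term: the paper never touches operator concentration; it fixes $j$, applies scalar Hoeffding to each Gram-type entry $\tfrac1n\sum_i\psi_j(X_i^{(k)})\psi_\ell(X_i^{(k)})-\delta_{j\ell}$, takes a union bound over $\ell\in\mathcal I$ (this is where its $\log N$ comes from, the event $\mathcal A_{\mathcal I,j}$), pays the factor $|\mathcal I|$ through Cauchy--Schwarz in $\ell$, and absorbs the complement event into $N^{-C_0}$ using $\|\hat g^{(k)}\|_2\le N^C$ and $P(\mathcal A_{\mathcal I,j}^c)\lesssim|\mathcal I|N^{-3C}$ --- the same high-probability-to-expectation trick you describe at the end, so that step is not really an obstacle, and note that the polynomial bound $\sum_j\nu_j^2\le (N/\sigma^2)\operatorname{tr}(K)$ you invoke is implicitly needed in the paper's complement estimate as well. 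You instead bound $T_{\mathcal I}\le\|B_n\|_{\mathrm{op}}^2\|\hat g^{(k)}\|_2^2$ and control $\|B_n\|_{\mathrm{op}}$ by an intrinsic-dimension operator Bernstein inequality for the centered rank-one summands $\Phi_i-\tilde F$; your range bound $C_\psi^2(|\mathcal I|\sum_j\nu_j^2)^{1/2}/n$ and variance proxy $|\mathcal I|\sum_j\nu_j^2/n$ are valid (the two variance orders are in fact $\lesssim\sum_j\nu_j^2/n$ and $|\mathcal I|/n$, so your proxy is a crude but sufficient majorant, and the squared deviation $|\mathcal I|n^{-1}\log N\sum_j\nu_j^2$ follows since $\log N\lesssim n$). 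What each route buys: the paper's argument is elementary (no matrix concentration, no infinite-dimensional range to worry about, only a $j$-by-$j$ family of scalar events), which is why it can be written in half a page; your argument is conceptually cleaner --- one design-only operator-norm event decouples the data-dependence of $\hat g^{(k)}$ in a single stroke, instead of events indexed by all $j\in\mathbb N^d$ --- and, as a side benefit, the sharper variance statistic $\max(|\mathcal I|,\sum_j\nu_j^2)/n$ shows the operator route could in principle replace the product $|\mathcal I|\sum_j\nu_j^2$ by the maximum, though neither the lemma nor its applications need this. The only point to nail down in your write-up is the one you flag yourself: either cite an intrinsic-dimension Bernstein inequality (the intrinsic dimension is polynomial in $N$ once $\operatorname{tr}(K)<\infty$, which the paper assumes where it matters) or justify that the truncation set can be taken of polynomial cardinality; with that reference supplied, your proof is complete.
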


\begin{proof}
For simplicity we omit the reference to the $k$th local problem and write $X_i=X_i^{(k)}$ and $\hat{g}=\hat{g}^{(k)}$.
Let us next define the event
\begin{align}
\mathcal{A}_{\mathcal{I},j}&=\Big\{ (X_1,...,X_n)\in\mathcal{X}^n:\,\Big(\frac{1}{n}\sum_{i=1}^{n} \psi_{j}(X_i)\psi_\ell(X_i)-\delta_{j\ell} \Big)^2\leq \frac{8C_\psi^2 C \log N}{n}, \forall\ell\in \mathcal{I}\Big\}.
\end{align}
Note that by Hoeffding's inequality, for arbitrary $\ell\in\mathcal{I}$,
\begin{align*}
P(\mathcal{A}_{\mathcal{I},j}^c)&\leq  |\mathcal{I}| P\Big( \big(\frac{1}{n}\sum_{i=1}^{n} \psi_{j}(X_i)\psi_\ell(X_i)-\delta_{j\ell} \big)^2> \frac{8C_\psi^2C \log N}{n}\Big)\\
&\leq 2|\mathcal{I}| \exp\Big\{-\frac{4C_\psi^2C \log N }{C_\psi^2}\Big\} \leq O(|\mathcal{I}| N^{-3C}).
\end{align*}
Then using $(a+b)^2\leq 2a^2+2b^2$ and Cauchy-Schwarz inequality
\begin{align*}
&E_0\Big\|(\tilde{F}\circ F^{-1})\Big(\frac{1}{n}\sum_{i=1}^{n}\hat{g}(X_i) K_{X_i}-E_X[\hat{g}(X) K_X]\Big)\Big\|_2^2\\
&\qquad=E_0\Big\|\sum_{j\in\mathbb{N}^d}\sum_{\ell\in\mathbb{N}^d}\nu_j\hat{g}_\ell\psi_j \big(\frac{1}{n}\sum_{i=1}^{n}\psi_j(X_i)\psi_\ell(X_i)-\delta_{j\ell}\big)\Big\|_2^2\\
&\qquad\lesssim E_0\sum_{j\in\mathbb{N}^d}\nu_j^2 \Big(\sum_{\ell\in \mathcal{I}}\hat{g}_\ell \big(\frac{1}{n}\sum_{i=1}^n\psi_j(X_i)\psi_\ell(X_i)-\delta_{j\ell}\big)\Big)^2\\
&\qquad\qquad + E_0\sum_{j\in\mathbb{N}^d}\nu_j^2 \Big(\sum_{\ell\in \mathcal{I}^c}|\hat{g}_\ell|  (C_{\psi}^2+1) \Big)^2\\
&\qquad\lesssim E_0\sum_{j\in\mathbb{N}^d}\nu_j^2  |\mathcal{I}| \sum_{\ell\in \mathcal{I}}\hat{g}_\ell^2 \big(\frac{1}{n}\sum_{i=1}^n\psi_j(X_i)\psi_\ell(X_i)-\delta_{j\ell}\big)^2\\
&\qquad\qquad + \sum_{j\in\mathbb{N}^d}\nu_j^2 \sum_{\ell \in \mathcal{I}^c}\mu_{\ell}  E_0\sum_{\ell \in \mathcal{I}^c} \hat{g}_\ell^2 \mu_\ell^{-1}\\
&\qquad\leq \sum_{j\in\mathbb{N}^d}\nu_j^2 E_0\|\hat{g}\|_2^2 \Big( \frac{8C_{\psi}^2C|\mathcal{I}|\log N}{n}+1_{A_{j,\mathcal{I}}^c} |\mathcal{I}|\Big)+ E_0\| \hat{g}_{\mathcal{I}^c}\|_{\mathcal{H}}^2  \sum_{j\in\mathbb{N}^d}\nu_j^2 \sum_{\ell \in \mathcal{I}^c}\mu_{\ell}  \\
&\qquad\lesssim \frac{|\mathcal{I}|\log N}{n}\sum_{j\in\mathbb{N}^d}\nu_j^2  E_0\|\hat{g}\|_2^2+ E_0 \| \hat{g}_{\mathcal{I}^c}\|_{\mathcal{H}}^2 \sum_{j\in\mathbb{N}^d}\nu_j^2 \sum_{\ell \in \mathcal{I}^c}\mu_{\ell}+O(N^{-C}),
\end{align*}
where $C$ can be chosen arbitrarily large, concluding the proof of our statement.

\end{proof}

\begin{lemma}\label{lem:cov:UB:help}
There exists $C>0$ such that
\begin{align*}
E_{0}\|\hat{K}_{x,n}^{I,(k)}/m-\tilde{F}(K_x) \|_{\mathcal{H}}^2\leq C \sum_{j\in \mathbb{N}^d}\nu_j^2.
\end{align*}
\end{lemma}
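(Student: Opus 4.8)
The plan is to bound the $\mathcal H$-norm \emph{deterministically}, given the design points, so that the outer expectation $E_0$ plays no role and no score-function expansion (as used for the $L_2$-bounds) is needed. Write $\tilde K_{x,n}^{(k)}:=\hat K_{x,n}^{I,(k)}/m$, which by the display defining it is the kernel ridge regression fit of the function $K_x$ in the $k$-th machine, i.e. the minimizer over $g\in\mathcal H$ of $L(g):=\frac1n\sum_{i=1}^n\big(K_x(X_i^{(k)})-g(X_i^{(k)})\big)^2+\frac{\sigma^2}{mn}\|g\|_{\mathcal H}^2$. First I would split, using the triangle inequality in $\mathcal H$ together with $(a+b)^2\le 2a^2+2b^2$, into $\|\tilde K_{x,n}^{(k)}-\tilde F(K_x)\|_{\mathcal H}^2\le 2\|\tilde K_{x,n}^{(k)}\|_{\mathcal H}^2+2\|\tilde F(K_x)\|_{\mathcal H}^2$, so that it only remains to bound each of the two summands by a constant multiple of $\sum_{j\in\mathbb N^d}\nu_j^2$, uniformly in $x\in\mathcal X$, in $k$ and in $N\ge 1$.

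For the second summand I would use the Mercer expansion $K_x=\sum_{j\in\mathbb N^d}\mu_j\psi_j(x)\psi_j$, which gives $\tilde F(K_x)=\sum_{j\in\mathbb N^d}\nu_j\mu_j\psi_j(x)\psi_j$, and then the identity $\|\psi_j\|_{\mathcal H}^2=\mu_j^{-1}$ to obtain $\|\tilde F(K_x)\|_{\mathcal H}^2=\sum_{j\in\mathbb N^d}\nu_j^2\mu_j\psi_j(x)^2$; bounding $\mu_j\le\mu_{\mathbf 1}$ (the eigenvalues factorize and are ordered) and $|\psi_j|\le C_\psi$ (Assumption \ref{ass: bound eigenf}) yields $\|\tilde F(K_x)\|_{\mathcal H}^2\le C_\psi^2\,\mu_{\mathbf 1}\sum_{j\in\mathbb N^d}\nu_j^2$. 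For the first summand I would exploit the variational characterization of $\tilde K_{x,n}^{(k)}$: plugging the competitor $g=K_x$ into $L$ annihilates the data-fit term, leaving $L(K_x)=\frac{\sigma^2}{mn}\|K_x\|_{\mathcal H}^2$, so by minimality of $\tilde K_{x,n}^{(k)}$ and nonnegativity of the data-fit term, $\frac{\sigma^2}{mn}\|\tilde K_{x,n}^{(k)}\|_{\mathcal H}^2\le L(\tilde K_{x,n}^{(k)})\le L(K_x)=\frac{\sigma^2}{mn}\|K_x\|_{\mathcal H}^2$, i.e. $\|\tilde K_{x,n}^{(k)}\|_{\mathcal H}^2\le\|K_x\|_{\mathcal H}^2=K(x,x)\le\sup_{x'\in\mathcal X}K(x',x')=:\kappa<\infty$ (a fixed constant, in fact constant in $x$ for a stationary kernel).

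Finally I would convert the constant $\kappa$ into a multiple of $\sum_{j\in\mathbb N^d}\nu_j^2$ by observing that the single index $j=\mathbf 1$ already forces $\sum_{j\in\mathbb N^d}\nu_j^2\ge\nu_{\mathbf 1}^2\ge\big(\mu_{\mathbf 1}/(\sigma^2+\mu_{\mathbf 1})\big)^2>0$ for every $N\ge 1$, whence $\kappa\lesssim\sum_{j\in\mathbb N^d}\nu_j^2$; assembling the three bounds gives the claim with a constant $C$ depending only on $C_\psi$, $\sigma$ and the kernel. I do not expect a genuine obstacle here: the only point requiring a moment's thought is realizing that the RKHS norm of the fitted function can be controlled purely through the defining optimization problem (the competitor-$K_x$ trick), which sidesteps any design-dependent matrix analysis. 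If one prefers, the estimate $\|\tilde K_{x,n}^{(k)}\|_{\mathcal H}^2\le K(x,x)$ can alternatively be read off the explicit representation $\tilde K_{x,n}^{(k)}=K(\cdot,\mathbb X_n^{(k)})A^{-1}K(\mathbb X_n^{(k)},x)$ with $A=K(\mathbb X_n^{(k)},\mathbb X_n^{(k)})+m^{-1}\sigma^2 I_n$, using $A^{-1}K(\mathbb X_n^{(k)},\mathbb X_n^{(k)})A^{-1}\preceq A^{-1}$ together with the reproducing property, but the variational argument is shorter.
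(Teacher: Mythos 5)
Your proposal is correct. It shares the paper's skeleton — the same split $\|\hat K_{x,n}^{I,(k)}/m-\tilde F(K_x)\|_{\mathcal H}^2\le 2\|\hat K_{x,n}^{I,(k)}/m\|_{\mathcal H}^2+2\|\tilde F(K_x)\|_{\mathcal H}^2$ and the identical computation $\|\tilde F(K_x)\|_{\mathcal H}^2=\sum_{j}\nu_j^2\mu_j\psi_j(x)^2\le C_\psi^2(\sup_j\mu_j)\sum_j\nu_j^2$ (boundedness of $\mu_j$ suffices here; you do not actually need the eigenvalues to be ordered) — but handles the first term by a different use of the KRR variational characterization. The paper plugs in the competitor $\tilde F(K_x)$, which leaves a nonzero data-fit term $\sum_i\tilde P(K_x)^2(X_i^{(k)})$ whose expectation is evaluated through \eqref{eq:tilde:P:K}, so its bound is in $E_0$ and is expressed directly in terms of $\sum_j\nu_j^2$. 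You plug in the competitor $K_x$ itself, which annihilates the data-fit term and gives the deterministic, design-free bound $\|\hat K_{x,n}^{I,(k)}/m\|_{\mathcal H}^2\le\|K_x\|_{\mathcal H}^2=K(x,x)\le C_\psi^2\sum_j\mu_j$, then you convert this $O(1)$ quantity into $\lesssim\sum_j\nu_j^2$ via $\sum_j\nu_j^2\ge\nu_{\mathbf 1}^2\gtrsim 1$ (valid since $\sigma^2$ is fixed, or more generally whenever $N\gtrsim\sigma^2$). What your route buys: no expectation over the design is needed, and the penalty constant ($\sigma^2/m$ versus $\sigma^2$) cancels between the two sides of the minimality inequality, so the argument is insensitive to the exact scaling of the regularizer; what it uses in exchange is that $K$ is bounded on the diagonal (equivalently trace class with bounded eigenfunctions), which is consistent with the paper's standing assumptions and is implicitly used in the paper's own proof as well through $\sup_j\mu_j\lesssim 1$. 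Both arguments yield the claimed bound, so there is no gap.
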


\begin{proof}
First note that
\begin{align*}
\|\hat{K}_{x,n}^{I,(k)}/m-\tilde{F}(K_x) \|_{\mathcal{H}}^2\leq 2m^{-2}\|\hat{K}_{x,n}^{I,(k)} \|_{\mathcal{H}}^2+2\|\tilde{F}(K_x)\|_{\mathcal{H}}^2.
\end{align*}
The second term on the right hand is bounded by
\begin{align*}
\|\tilde{F}(K_x)\|_{\mathcal{H}}^2=\sum_{j\in \mathbb{N}^d}\mu_j^{-1}\nu_j^2 \mu_j^2\psi_j(x)^2
\leq C_{\psi}^2  \sum_{j\in \mathbb{N}^d}\mu_j\nu_j^2 \lesssim \sum_{j\in \mathbb{N}^d}\nu_j^2.
\end{align*}

Since $\hat{K}_{x,n}^{I,(k)}$ is a KRR estimator, we get that
\begin{align*}
E_0\sigma^2 \|\tilde{K}_{x,n}^{(k)}\|_{\mathcal{H}}^2&\leq E_0\Big(\sum_{i=1}^n (\tilde{K}_{x,n}^{(k)}(X_i^{(k)})-K_x(X_i^{(k)}))^2+\sigma^2 \|\tilde{K}_{x,n}^{(k)} \|_{\mathcal{H}}^2\Big)\\
&\leq  E_0\Big( \sum_{i=1}^n (\tilde{F}(K_x)(X_i^{(k)})-K_x(X_i^{(k)}))^2+\sigma^2 \|\tilde{F}(K_x)\|_{\mathcal{H}}^2\Big)\\
&\leq \sum_{i=1}^n E_0\tilde{P}(K_x)^2(X_i^{(k)})+\sigma^2\|\tilde{F}(K_x)\|_{\mathcal{H}}^2=O( \sum_{j\in \mathbb{N}^d}\nu_j^2),
\end{align*}
where the last inequality follows from \eqref{eq:tilde:P:K}.

\end{proof}

\begin{lemma}\label{lem: bound exp post var}
	Assume that the eigenvalues $\mu_j$ of the covariance kernel $K$ satisfy $\sum_{j\in\mathbb{N}^d} \mu_j<\infty$, $|\{j\in \mathbb{N}^{d}:\,  \mu_j N \geq \sigma^2\}|\leq N$, and $\sigma^2\geq c>0$. Then the expectation of the posterior variance is of the following order
	$$E_0E_X\Var\left(f(X)|\mathbb{D}_N\right)\asymp \sigma^2\sum_{j\in\mathbb{N}^d}\frac{\mu_j}{\sigma^2+N\mu_j},$$
	where the expectation $E_X$ corresponds to the random variable $X\sim U[0,1]^d$ and the multiplicative constant depends on $\sum_{j\in\mathbb{N}^d}\mu_j$ and $c$.
\end{lemma}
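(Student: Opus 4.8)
The plan is to bound the expected learning curve by a bias--variance decomposition for a carefully chosen linear estimator of the equivalent‑kernel section $\tilde F(K_x)$, exploiting the variational (kernel ridge regression) form of the posterior variance, and to get the matching lower bound from operator convexity. \textbf{Step 1 (variational upper bound for the posterior variance).} Completing the square in $\alpha$ gives, for every $\alpha=\alpha(x)\in\mathbb{R}^N$,
\begin{align*}
\hat C_N(x,x)=\min_{\alpha\in\mathbb{R}^N}\Big[K(x,x)-2\alpha^{\!\top}K(\mathbb{X},x)+\alpha^{\!\top}\big(K(\mathbb{X},\mathbb{X})+\sigma^2 I_N\big)\alpha\Big]\le\big\|K_x-\hat g_x\big\|_{\mathcal{H}}^2+\sigma^2\sum_{i=1}^N\alpha_i(x)^2,
\end{align*}
with $\hat g_x:=\sum_{i=1}^N\alpha_i(x)K_{X_i}\in\mathcal{H}$ and $K_x=K(x,\cdot)$. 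I would take the ``population KRR weights'' $\alpha_i(x)=\tfrac1N\sum_{k\in\mathbb N^d}\nu_k\psi_k(x)\psi_k(X_i)$, which are well defined since $\sum_k\nu_k\le\tfrac N{\sigma^2}\sum_k\mu_k<\infty$ by $\sum_k\mu_k<\infty$; using $E_X[\psi_k(X)K_X]=\mu_k\psi_k$ one checks $E_0\hat g_x=\sum_k\nu_k\mu_k\psi_k(x)\psi_k=\tilde F(K_x)$.

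\textbf{Step 2 (bias--variance and telescoping).} Since $\hat g_x$ is an unbiased (i.i.d.\ sum) estimator of $\tilde F(K_x)$ in $\mathcal H$, for each $x$, $E_0\|K_x-\hat g_x\|_{\mathcal{H}}^2=\|\tilde P(K_x)\|_{\mathcal{H}}^2+E_0\|\hat g_x-\tilde F(K_x)\|_{\mathcal{H}}^2$. Integrating over $x$ and using orthonormality of $(\psi_k)$ gives $\int\|\tilde P(K_x)\|_{\mathcal{H}}^2dx=\sum_k\mu_k(1-\nu_k)^2$ and $\sigma^2 E_0\!\int\sum_i\alpha_i(x)^2dx=\tfrac{\sigma^2}N\sum_k\nu_k^2$; by the elementary identity $\mu_k(1-\nu_k)^2+\tfrac{\sigma^2}N\nu_k^2=\tfrac{\sigma^2\mu_k}{\sigma^2+N\mu_k}$ these combine to exactly $\sigma^2\sum_k\tfrac{\mu_k}{\sigma^2+N\mu_k}$. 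Writing $\hat g_x-\tilde F(K_x)=\tfrac1N\sum_i\big(b_x(X_i)K_{X_i}-E_X[b_x(X)K_X]\big)$ with $b_x(z)=\sum_k\nu_k\psi_k(x)\psi_k(z)$, bounding the variance of this i.i.d.\ sum by the second moment of one summand, and using $\|K_{X_i}\|_{\mathcal{H}}^2=K(X_i,X_i)\le C_\psi^2\sum_m\mu_m$ together with $E_X[b_x(X)^2]=\sum_k\nu_k^2\psi_k(x)^2$, one obtains
\begin{align*}
E_0\!\int_{\mathcal{X}}\|\hat g_x-\tilde F(K_x)\|_{\mathcal{H}}^2dx\le\frac{C_\psi^2\sum_m\mu_m}{N}\sum_k\nu_k^2 .
\end{align*}

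\textbf{Step 3 (conclusion and lower bound).} Since $\nu_k\le1$, $\sum_k\nu_k^2\le\sum_k\nu_k=N\sum_k\tfrac{\mu_k}{\sigma^2+N\mu_k}$, so the remainder in Step 2 is at most $C_\psi^2(\sum_m\mu_m)\sum_k\tfrac{\mu_k}{\sigma^2+N\mu_k}\le C_\psi^2 c^{-1}(\sum_m\mu_m)\,\sigma^2\sum_k\tfrac{\mu_k}{\sigma^2+N\mu_k}$ using $\sigma^2\ge c$; combining with Steps 1--2 yields $E_0E_X\Var(f(X)|\mathbb D_N)\le(1+C_\psi^2c^{-1}\sum_m\mu_m)\,\sigma^2\sum_k\tfrac{\mu_k}{\sigma^2+N\mu_k}$, with constant depending only on $c$ and $\sum_m\mu_m$. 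For the matching lower bound I would use the exact identity $E_X\Var(f(X)|\mathbb D_N)=\sigma^2\,\mathrm{tr}\big((\sigma^2 M^{-1}+\Psi^{\!\top}\Psi)^{-1}\big)$ (Mercer's theorem, with $M:=\diag(\mu_k)$ and $\Psi=(\psi_k(X_i))_{i,k}$, plus the push‑through identity), then operator convexity of $A\mapsto(\sigma^2M^{-1}+A)^{-1}$ on the positive semidefinite cone and Jensen's inequality together with $E_0[\Psi^{\!\top}\Psi]=NI$, giving $E_0E_X\Var(f(X)|\mathbb D_N)\ge\sigma^2\,\mathrm{tr}\big((\sigma^2M^{-1}+NI)^{-1}\big)=\sigma^2\sum_k\tfrac{\mu_k}{\sigma^2+N\mu_k}$ (essentially the Opper--Vivarelli bound already cited in the paper).

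\textbf{Main obstacle.} The only non‑mechanical ingredient is the choice of the weights $\alpha_i(x)$: they must simultaneously make $\hat g_x$ unbiased for $\tilde F(K_x)$ and make the two deterministic pieces $\int\|\tilde P(K_x)\|_{\mathcal{H}}^2dx$ and $\sigma^2 E_0\!\int\sum_i\alpha_i^2dx$ telescope into precisely the target series; the population‑KRR weights above achieve both. Once this is set up, the remaining variance term is controlled entirely by the bounded eigenfunctions (Assumption \ref{ass: bound eigenf}) and $\sum_m\mu_m<\infty$, with $\sigma^2\ge c$ absorbing the kernel‑dependent constant — in particular no concentration inequality is required, and the hypothesis $|\{j:\mu_jN\ge\sigma^2\}|\le N$ is not needed for this argument.
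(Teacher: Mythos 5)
Your proposal is correct, and for the upper bound it takes a genuinely different route from the paper. The paper's proof of Lemma \ref{lem: bound exp post var} is short and citation-based: the lower bound invokes assertion (12) of Opper--Vivarelli (the operator-convexity/Jensen argument you sketch is exactly the proof of that cited inequality, so your lower bound is essentially the same), while the upper bound invokes the Opper--Trecate--Williams learning-curve bound $\sum_j\mu_j-N\sum_{j\in\mathcal J}\mu_j^2/c_j$ with $c_j=(N-1)\mu_j+\sigma^2+\sum_j\mu_j$, applied to $\mathcal J=\{j:\mu_jN\ge\sigma^2\}$ --- which is precisely where the hypothesis $|\{j:\mu_jN\ge\sigma^2\}|\le N$ enters. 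Your upper bound instead is a self-contained variational argument: plugging the population (equivalent-kernel) weights $\alpha_i(x)=N^{-1}\sum_k\nu_k\psi_k(x)\psi_k(X_i)$ into the quadratic characterization of $\hat C_N(x,x)$, exploiting the exact telescoping identity $\mu_k(1-\nu_k)^2+\tfrac{\sigma^2}{N}\nu_k^2=\tfrac{\sigma^2\mu_k}{\sigma^2+N\mu_k}$, and bounding the fluctuation term by $\tfrac{C_\psi^2\sum_m\mu_m}{N}\sum_k\nu_k^2$. I checked the computations (unbiasedness of $\hat g_x$, the two deterministic integrals, the i.i.d.\ variance bound, and the absorption of the remainder via $\nu_k\le 1$ and $\sigma^2\ge c$) and they are all sound. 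What your route buys: it avoids the cardinality hypothesis entirely, needs no external learning-curve result, and yields the explicit constant $1+C_\psi^2c^{-1}\sum_m\mu_m$. The one cosmetic discrepancy is that your constant depends on $C_\psi$ from Assumption \ref{ass: bound eigenf}, which the lemma statement does not list among the dependencies; since Assumption \ref{ass: bound eigenf} is a standing assumption of the paper (and both proofs already use orthonormality of the $\psi_j$), this is harmless, but worth flagging if the lemma is meant to stand alone.
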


\begin{proof}
	It is shown in Section 6 of \cite{opper:vivarelli:1999} that the expectation of the posterior variance, named ``generalization error'', is bounded from below as follows
	$$E_0E_X\Var\left(f(X)|\mathbb{D}_n\right)\geq \sigma^2\sum_{j\in\mathbb{N}^d}\frac{\mu_j}{\sigma^2+N\mu_jE_X\psi^2_j(X)}=\sigma^2\sum_{j\in\mathbb{N}^d}\frac{\mu_j}{\sigma^2+N\mu_j}.$$

In \cite{opper:trecate:williams:98}, it has been shown that for stationary GPs, for any $\mathcal{J}\subset \mathbb{N}^d$, with $|\mathcal{J}|\leq N$, the learning curve is bounded from above by
$$E_0E_X\Var\left(f(X)|\mathbb{D}_n\right)\leq\sum_{j\in\mathbb{N}^d}\mu_j-N\sum_{j\in \mathcal{J}}\frac{\mu_j^2}{c_j},$$
where
$$c_j=(N-1)\mu_j+\sigma^2+\sum_{j\in\mathbb{N}^d}\mu_j.$$ 

Let us take $\mathcal{J}=\{j\in \mathbb{N}^{d}:\,  \mu_j N \geq \sigma^2\}$ and by assumption its cardinality is bounded by $N$. Then
\begin{align*}
	\sum_{j\in\mathbb{N}^d}\mu_j-N\sum_{j\in \mathcal{J}}\frac{\mu_j^2}{c_j}&=\sum_{j\in \mathcal{J}}\mu_j\frac{c_j-N\mu_j}{c_j}+\sum_{j\notin \mathcal{J}}\mu_j\\
&	=\sum_{j\in\mathcal{J}}\mu_j\frac{\sum_{j\in\mathbb{N}^d}\mu_j+\sigma^2-\mu_j}{\sum_{j\in\mathbb{N}^d}\mu_j+\sigma^2+(N-1)\mu_j}+\sum_{j\notin \mathcal{J}}\mu_j\\
&\leq \sigma^2\sum_{j\in\mathcal{J}}\mu_j\frac{\sum_{j\in\mathbb{N}^d}\mu_j/\sigma^2+1}{\sigma^2+\mu_j N}+2\sigma^2\sum_{j\notin\mathcal{J}}\frac{\mu_j}{\sigma^2+\mu_j N}\\
&\lesssim \sigma^2\sum_{j\in\mathbb{N}^d} \frac{\mu_j}{\sigma^2+\mu_j N},
\end{align*}
concluding our proof.

\end{proof}

\begin{lemma}\label{lem: bound eigen poly}
For $\nu_j$, $j\in\mathbb{N}^d$, defined in \eqref{eq: eigenvalues equII} with eigenvalues $\mu_j$ polynomially decaying according to Assumption \ref{ass: bound eigenvalues} and $k\in\mathbb{N}$,
\begin{align*}
\sum_{j\in\mathbb{N}^d}\nu_j^k\asymp J_{\alpha}\log^{d-1}J_{\alpha},
\end{align*}
where $J_{\alpha}=(N/\sigma^2)^{\frac{d}{2\alpha+d}}$.
\end{lemma}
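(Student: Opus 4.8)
The plan is to estimate the sum $\sum_{j\in\NN^d}\nu_j^k$ by comparing it with the counting function of the multi-indices for which the one-dimensional eigenvalues are not yet negligible. Recall that $\mu_j=\prod_{i=1}^d\mu_{j_i}$ with $\mu_{j_i}\asymp j_i^{-1-2\alpha/d}$, so $\mu_j\asymp\big(\prod_{i=1}^d j_i\big)^{-1-2\alpha/d}$, and $\nu_j=\mu_jN/(\sigma^2+\mu_jN)$. The key observation is that $\nu_j\asymp 1$ when $\mu_jN\gtrsim\sigma^2$, i.e.\ when $\prod_{i=1}^d j_i\lesssim (N/\sigma^2)^{d/(2\alpha+d)}=J_\alpha$, and $\nu_j\asymp \mu_jN/\sigma^2$ (hence decays polynomially) in the complementary regime. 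Since $0\le\nu_j\le 1$, we have $\nu_j^k\asymp\nu_j$ up to constants depending only on $k$ on the ``bulk'' region and $\nu_j^k\le\nu_j$ everywhere, so it suffices to treat the cases $k=1$ (for the upper bound it is enough to bound $\sum\nu_j$, and for the lower bound $\sum\nu_j^k\ge$ the contribution of the bulk region where $\nu_j^k\gtrsim 1$).

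First I would prove the lower bound: restrict the sum to the set $\mathcal{B}=\{j\in\NN^d:\prod_{i=1}^d j_i\le c J_\alpha\}$ for a small constant $c>0$; on $\mathcal{B}$ one has $\mu_jN\ge\sigma^2$ (after adjusting constants, using the two-sided bound \eqref{assump:poly}), hence $\nu_j\ge 1/2$ and $\nu_j^k\ge 2^{-k}$. Therefore $\sum_{j\in\NN^d}\nu_j^k\ge 2^{-k}|\mathcal{B}|$, and a standard estimate on the number of lattice points with bounded product of coordinates (this is exactly the quantity controlled in \eqref{def:small_index}, i.e.\ $|\{j\in\NN^d:\prod_i j_i\le I\}|\asymp I\log^{d-1}I$; see also Lemma~\ref{lem:card}) gives $|\mathcal{B}|\gtrsim J_\alpha\log^{d-1}J_\alpha$, which is the desired lower bound.

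Next the upper bound. Split $\NN^d$ into $\mathcal{B}'=\{\prod_i j_i\le C J_\alpha\}$ and its complement. On $\mathcal{B}'$ bound $\nu_j^k\le 1$ and use $|\mathcal{B}'|\lesssim J_\alpha\log^{d-1}J_\alpha$ again from Lemma~\ref{lem:card}. On the complement, $\mu_jN\le\sigma^2$ up to constants, so $\nu_j\le \mu_jN/\sigma^2\lesssim N\big(\prod_i j_i\big)^{-1-2\alpha/d}/\sigma^2$, and hence (using $k\ge 1$, so $\nu_j^k\le\nu_j$) it is enough to bound $\tfrac{N}{\sigma^2}\sum_{\prod_i j_i>CJ_\alpha}\big(\prod_i j_i\big)^{-1-2\alpha/d}$. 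Grouping the lattice points according to the value $t=\prod_i j_i$ and using that the number of $j\in\NN^d$ with $\prod_i j_i=t$ is at most $d(t)\lesssim_\epsilon t^\epsilon$ (or more crudely summing $\sum_{t>CJ_\alpha} t^{-1-2\alpha/d}\log^{d-1}t\lesssim J_\alpha^{-2\alpha/d}\log^{d-1}J_\alpha$ via Abel summation from the partial-sum asymptotics of Lemma~\ref{lem:card}), this tail is $\lesssim \tfrac{N}{\sigma^2}J_\alpha^{-2\alpha/d}\log^{d-1}J_\alpha$. Since $\tfrac{N}{\sigma^2}J_\alpha^{-2\alpha/d}=J_\alpha$ by the very definition $J_\alpha=(N/\sigma^2)^{d/(2\alpha+d)}$, the tail contributes $\lesssim J_\alpha\log^{d-1}J_\alpha$ as well, completing the upper bound.

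The only mildly delicate point — and the step I would be most careful about — is the multidimensional lattice-point count $|\{j\in\NN^d:\prod_{i=1}^d j_i\le I\}|\asymp I\log^{d-1}I$ together with the companion tail estimate $\sum_{\prod_i j_i> I}\big(\prod_i j_i\big)^{-1-2\alpha/d}\lesssim I^{-2\alpha/d}\log^{d-1}I$; both are exactly what Lemma~\ref{lem:card} (and the estimate \eqref{def:small_index} referenced in the proof of Corollary~\ref{cor: contraction:poly}) supply, so in the write-up I would simply invoke that lemma rather than re-deriving the divisor-sum asymptotics. Everything else is bookkeeping with the two-sided bounds in \eqref{assump:poly} and the elementary inequalities $2^{-k}\le\nu_j^k\le\nu_j$ on the bulk and $\nu_j^k\le\nu_j\le\mu_jN/\sigma^2$ on the tail.
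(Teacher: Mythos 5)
Your decomposition into a bulk $\{\prod_i j_i\lesssim J_\alpha\}$ where $\nu_j\asymp 1$ and a tail where $\nu_j\asymp \mu_j N/\sigma^2$ is exactly the paper's, and your upper bound coincides with the paper's argument (the paper keeps $\nu_j^k$ and applies Lemma~\ref{lem:card} with $\gamma=k(2\alpha/d+1)-1$, while you reduce to $k=1$ via $\nu_j^k\le\nu_j$; both are fine). The genuine difference is in the lower bound: you take it from the bulk, bounding $\sum_j\nu_j^k\ge 2^{-k}|\mathcal{B}|$ and invoking a two-sided lattice-point count $|\{j\in\mathbb{N}^d:\prod_i j_i\le I\}|\asymp I\log^{d-1}I$, whereas the paper takes it from the tail, using $\nu_j\ge N\mu_j/(2\sigma^2)$ outside $\mathcal{N}$ together with the \emph{two-sided} tail-sum estimate in Lemma~\ref{lem:card}, which after the identity $(N/\sigma^2)^kJ_\alpha^{-k(2\alpha/d+1)+1}=J_\alpha$ already yields $\gtrsim J_\alpha\log^{d-1}J_\alpha$. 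The one point you would need to repair is precisely the citation you flag as delicate: Lemma~\ref{lem:card} as stated only gives the \emph{upper} bound $|\mathcal{I}_{I,d}|\le 2^dI\log^{d-1}I$ on the cardinality (its lower-bound half concerns the tail sum $\sum_{j\in\mathcal{I}_{I,d}^c}\prod_i j_i^{-\gamma-1}$, not the count), so the lower bound $|\mathcal{B}|\gtrsim J_\alpha\log^{d-1}J_\alpha$ is not something you can "simply invoke". It is a true and standard divisor-type estimate, and a short induction in $d$ (mirroring the lemma's own proof, e.g.\ $|\mathcal{I}_{I,d+1}|\ge\sum_{j_{d+1}\le I}|\mathcal{I}_{I/j_{d+1},d}|$ plus $\sum_{j\le I}j^{-1}\gtrsim\log I$) supplies it, so your route is correct once that half-page is added; alternatively you can sidestep it entirely by extracting the lower bound from the tail as the paper does, which stays strictly within what Lemma~\ref{lem:card} provides.
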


\begin{proof}
Let  $\mathcal{N}:=\{j\in\mathbb{N}^d:n\mu_j\geq\sigma^2\}=\{j\in\mathbb{N}^d:\prod_{i=1}^dj_i\leq CJ_{\alpha}\}$ and we apply Lemma \ref{lem:card} [with $\mathcal{I}$=$\mathcal{N}$, $I$=$CJ_{\alpha}$ and $\gamma=k(2\alpha/d+1)-1$].
First, we prove the upper bound,
\begin{align*}
\sum_{j\in\mathbb{N}^d}\nu_j^k&=\sum_{j\in\mathbb{N}^d}\frac{(N\mu_j)^k}{(\sigma^2+N\mu_j)^k}\\
&\leq\sum_{j\in\mathcal{N}}1+\Big(\frac{N}{\sigma^2}\Big)^k\sum_{j\notin\mathcal{N}}\mu_j^k\\
&\lesssim J_{\alpha}(\log J_{\alpha})^{d-1}+\Big(\frac{N}{\sigma^2}\Big)^kJ^{-k(2\alpha/d+1)+1}_{\alpha}(\log J_{\alpha})^{d-1}\\
&\lesssim J_{\alpha}(\log J_{\alpha})^{d-1}.
\end{align*}
The lower bound follows similarly,
\begin{align*}
\sum_{j\in\mathbb{N}^d}\nu_j^k&\geq  \Big(\frac{N}{2\sigma^2}\Big)^k\sum_{j\notin\mathcal{N}}\mu_j^k\gtrsim \Big(\frac{N}{\sigma^2}\Big)^kJ^{-k(2\alpha/d+1)+1}_{\alpha}(\log J_{\alpha})^{d-1}\gtrsim  J_{\alpha}(\log J_{\alpha})^{d-1}.
\end{align*}
\end{proof}

\begin{lemma}\label{lem: bound eigen expo}
	For $\nu_j$, $j\in\mathbb{N}^d$, defined in \eqref{eq: eigenvalues equII} with eigenvalues $\mu_j$ exponentially decaying according to Assumption \ref{ass: bound eigenvalues} with $b=1$, $a<1$ and $k\in\mathbb{N}$,
	\begin{align*}
	\sum_{j\in\mathbb{N}^d}\nu_j^k\asymp J^d_a,
	\end{align*}
	where $J_a=a^{-1}\log(N/\sigma^2)$.
\end{lemma}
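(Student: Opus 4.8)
The plan is to mirror the proof of Lemma \ref{lem: bound eigen poly}, replacing the hyperbolic-cross count of Lemma \ref{lem:card} by the elementary count of lattice points in a simplex. Throughout write $J_a=a^{-1}\log(N/\sigma^2)$; since $\sigma^2$ is fixed and $N\to\infty$, $N/\sigma^2\to\infty$, and $a<1$ gives $J_a\ge\log(N/\sigma^2)\to\infty$. With $b=1$ the one-dimensional eigenvalues satisfy $C^{-1}e^{-ai}\le\mu_i\le Ce^{-ai}$, so by \eqref{eq: eigenvalues} the multivariate eigenvalues obey $C^{-d}e^{-a\sum_{i=1}^d j_i}\le\mu_j\le C^{d}e^{-a\sum_{i=1}^d j_i}$. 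Set $\mathcal{N}=\{j\in\mathbb{N}^d:\,N\mu_j\ge\sigma^2\}$; from the two-sided bound on $\mu_j$,
$$\big\{j\in\mathbb{N}^d:\,{\textstyle\sum_{i=1}^d}j_i\le J_a-a^{-1}d\log C\big\}\subseteq\mathcal{N}\subseteq\big\{j\in\mathbb{N}^d:\,{\textstyle\sum_{i=1}^d}j_i\le J_a+a^{-1}d\log C\big\}.$$

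First I would estimate $|\mathcal{N}|$. The number of $j\in\mathbb{N}^d$ with $\sum_{i=1}^d j_i=m$ equals $\binom{m-1}{d-1}\le m^{d-1}$, so $|\{j\in\mathbb{N}^d:\sum_i j_i\le M\}|=\binom{\lfloor M\rfloor}{d}\asymp M^{d}$ as $M\to\infty$. Since $a^{-1}d\log C=o(J_a)$, both displayed sets have cardinality $\asymp J_a^{d}$, hence $|\mathcal{N}|\asymp J_a^{d}$. The lower bound on the sum is then immediate: on $\mathcal{N}$ we have $\nu_j=N\mu_j/(\sigma^2+N\mu_j)\ge1/2$, so
$$\sum_{j\in\mathbb{N}^d}\nu_j^{k}\ge\sum_{j\in\mathcal{N}}\nu_j^{k}\ge2^{-k}|\mathcal{N}|\gtrsim J_a^{d}.$$

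For the matching upper bound I would split $\sum_{j\in\mathbb{N}^d}\nu_j^{k}\le|\mathcal{N}|+(N/\sigma^2)^{k}\sum_{j\notin\mathcal{N}}\mu_j^{k}$, using $\nu_j\le1$ on $\mathcal{N}$ and $\nu_j\le N\mu_j/\sigma^2$ off it. Off $\mathcal{N}$ one has $\sum_i j_i>M_0:=J_a-a^{-1}d\log C$ and $\mu_j^{k}\le C^{dk}e^{-ak\sum_i j_i}$, so $\sum_{j\notin\mathcal{N}}\mu_j^{k}\le C^{dk}\sum_{m>M_0}m^{d-1}e^{-akm}$. Because $M_0\asymp J_a$ is large relative to $(d-1)/(ak)$ for $N$ large, $m^{d-1}e^{-akm}$ is decreasing on $[M_0,\infty)$; factoring out $e^{-akm/2}$ and summing the geometric remainder, together with $1-e^{-ak/2}\ge(ak/2)e^{-ak/2}\gtrsim a$ (valid since $a<1$), gives $\sum_{m>M_0}m^{d-1}e^{-akm}\lesssim a^{-1}M_0^{d-1}e^{-akM_0}$. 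Finally $M_0^{d-1}\le a^{-(d-1)}(\log(N/\sigma^2))^{d-1}$ and $e^{-akM_0}=C^{dk}(\sigma^2/N)^{k}$ (this is exactly why $M_0$ was chosen), so
$$(N/\sigma^2)^{k}\sum_{j\notin\mathcal{N}}\mu_j^{k}\lesssim a^{-d}(\log(N/\sigma^2))^{d-1}=\frac{J_a^{d}}{\log(N/\sigma^2)}\le J_a^{d},$$
and adding $|\mathcal{N}|\asymp J_a^{d}$ yields $\sum_{j\in\mathbb{N}^d}\nu_j^{k}\lesssim J_a^{d}$. Combined with the lower bound this gives $\sum_{j\in\mathbb{N}^d}\nu_j^{k}\asymp J_a^{d}$.

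Apart from bookkeeping, the only delicate point is the tail estimate: one must verify that the factors $(N/\sigma^2)^{k}$ and $e^{-akM_0}$ cancel and that $M_0\gg(d-1)/(ak)$ for the monotonicity step — both consequences of $\log(N/\sigma^2)\to\infty$. Since in the applications (Corollaries \ref{cor: contraction:exp}, \ref{cor: coverage:exp}) the parameter $a$ may tend to $0$, the $a$-dependence of $1-e^{-ak/2}$ must be retained rather than absorbed into a constant.
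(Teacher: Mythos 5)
Your argument is correct, and it follows the paper's overall skeleton — the same split of $\mathbb{N}^d$ into $\mathcal{N}=\{j: N\mu_j\ge\sigma^2\}$ and its complement, with $\nu_j\in[1/2,1]$ on $\mathcal{N}$ and $\nu_j\le N\mu_j/\sigma^2$ off it — but the two key quantitative ingredients are obtained by a different route. The paper proves both the tail bound $\sum_{j\notin\mathcal{N}_d}e^{-ak\sum_i j_i}\lesssim a^{-d}(N/\sigma^2)^{-k}\log^{d-1}(N/\sigma^2)$ and the lower bound $|\{j:\sum_i j_i\le J\}|\ge (J-d)^d/d!$ by induction on the dimension $d$, peeling off one coordinate at a time and picking up a factor $a^{-1}$ per dimension; you instead count lattice points in the simplex directly via compositions ($\binom{m-1}{d-1}$ points on each level set, hockey-stick identity for the cumulative count) and reduce the tail to a one-dimensional sum $\sum_{m>M_0}m^{d-1}e^{-akm}$, which you control by monotonicity plus a geometric series, correctly keeping the $a$-dependence through $1-e^{-ak/2}\gtrsim a$ (this is essential since $a\to0$ in the corollaries, and it reproduces the paper's $a^{-d}$ factor in one step rather than one $a^{-1}$ per induction level). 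Your choice of $M_0=J_a-a^{-1}d\log C$ makes the cancellation $(N/\sigma^2)^k e^{-akM_0}=C^{dk}$ exact, and the monotonicity threshold $(d-1)/(ak)=o(M_0)$ is verified, so the estimate is sound. The net effect is a more elementary, self-contained proof that avoids the two inductions; the paper's inductive scheme is slightly more modular (the same recursion handles both the count and the tail) but yields the same bounds, including the extra $\log^{-1}(N/\sigma^2)$ slack in the off-$\mathcal{N}$ contribution that both arguments discard.
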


\begin{proof}
	Let $\mathcal{N}_d:=\{j\in\mathbb{N}^d:N\mu_j\geq\sigma^2\}=\{j\in\mathbb{N}^d:\sum_{i=1}^dj_i\leq J_a + c/a\}$ with $c>0$ a positive constant. Then it is easy to see that $|\mathcal{N}_d|\leq 2^dJ_a^d$. Moreover, we will show by induction on $d$ that
	$$\sum_{j\notin\mathcal{N}_d}e^{-ak\sum_{i=1}^dj_i}\lesssim a^{-d}\Big(\frac{N}{\sigma^2}\Big)^{-k}\log^{d-1}(N/\sigma^2).$$
	Let us start with the case $d=1$. We can directly see that
	$$\sum_{j>J_a}e^{-akj}\leq Ce^{-akJ_a}\frac{e^{ak}}{e^{ak}-1}\lesssim a^{-1}\Big(\frac{N}{\sigma^2}\Big)^{-k}.$$
	Now, assume that our assumption holds for $d$ and consider the case $d+1$, then
	\begin{align*}
	\sum_{j\notin\mathcal{N}_{d+1}}e^{-ak\sum_{i=1}^{d+1}j_i}&\lesssim \sum_{j_{1:d}\in\mathbb{N}^d}e^{-ak\sum_{i=1}^dj_i}\sum_{j_{d+1}>\max(J_a-\sum_{i=1}^dj_i,0)}e^{-akj_{d+1}}\\
	&\lesssim \sum_{j_{1:d}\in\mathbb{N}^d}(e^{-ak\sum_{i=1}^dj_i}\wedge e^{-akJ_a})\frac{e^{ak}}{e^{ak}-1}\\
	&\lesssim \sum_{j_{1:d}\in\mathcal{N}_d}a^{-1}e^{-akJ_a}+\sum_{j_{1:d}\notin\mathcal{N}_d}a^{-1}e^{-ak\sum_{i=1}^dj_i}\\
	&\lesssim a^{-1}|\mathcal{N}_d|\Big(\frac{N}{\sigma^2}\Big)^{-k}+a^{-d-1}\Big(\frac{N}{\sigma^2}\Big)^{-k}\log^{d-1}(N/\sigma^2)\\
	&\lesssim a^{-d-1}\Big(\frac{N}{\sigma^2}\Big)^{-k}\log^d(N/\sigma^2),
	\end{align*}
	which concludes the induction proof.
	
	Using these two results, we can easily show that
$$\sum_{j\in\mathbb{N}^d}\nu_j^k\lesssim\sum_{j\in\mathcal{N}_d}1+\Big(\frac{N}{\sigma^2}\Big)^k\sum_{j\notin\mathcal{N}_d}e^{-ak\sum_{i=1}^dj_i}\lesssim|\mathcal{N}_d|+a^{-d}\log^{d-1}(n/\sigma^2)\lesssim J^d_a.$$
	
	On the other hand, we can show by induction that for all $J>d$, the cardinality of $\mathcal{N}_d:=\{j\in\mathbb{N}^d:\sum_{i=1}^dj_i\leq J\}$ is bounded from below as follows
	$$|\mathcal{N}_d|\geq(J-d)^d/d!.$$
	Note that it holds trivially for $d=1$. Now assume it holds for $d$, then we can write $\mathcal{N}_{d+1}$ as a partition as follows
	$$\mathcal{N}_{d+1}=\{j\in\mathbb{N}^{d+1}:\sum_{k=1}^{d+1}j_k\leq J\}=\bigcup_{i=1}^{J-d}\{j\in\mathbb{N}^{d+1}:j_{d+1}=i; \sum_{k=1}^dj_k\leq J-i\}.$$
	According to our induction assumption, the cardinality of all these subsets are bounded from below by $(J-d-i)^d/d!$, hence we have
	$$|\mathcal{N}_{d+1}|\geq\sum_{i=1}^{J-d}\frac{(J-d-i)^d}{d!}\geq \int_{1}^{J-d}\frac{(J-d-t)^d}{d!}dt = \frac{(J-d-1)^{d+1}}{(d+1)!},$$
	which concludes our induction proof. Using this result, we can now show that
	$$\sum_{j\in\mathbb{N}^d}\nu_j^k\geq\sum_{j\in\mathcal{N}_d}1=|\mathcal{N}_d|\gtrsim J^d_a,$$
	concluding the proof.
\end{proof}

\begin{lemma}\label{lem:largeJ}
For arbitrary $f_0\in\ell_2(L)$ we get that
\begin{align*}
E_{0}\|\Delta\hat{f}_{n}^{(k)} \|_{\mathcal{H}}^2\leq C N,
\end{align*}
for some universal constant $C>0$.
\end{lemma}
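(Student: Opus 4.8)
The plan is to bound $E_0\|\Delta\hat{f}_{n}^{(k)}\|_{\mathcal{H}}^2$ by splitting, via the triangle inequality in the RKHS norm, $E_0\|\Delta\hat{f}_{n}^{(k)}\|_{\mathcal{H}}^2\le 2E_0\|\hat{f}_{n}^{(k)}\|_{\mathcal{H}}^2+2\|\tilde{F}(f_0)\|_{\mathcal{H}}^2$, and then showing that each of the two terms on the right is $O(N)$: the first by the extremal property of the kernel ridge regression estimator, the second by an elementary estimate on the eigenvalues $\nu_j$.

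For the first term, recall that (both for Method I and Method II) the local posterior mean $\hat{f}_{n}^{(k)}$ coincides with the KRR minimizer over $f\in\mathcal{H}$ of the functional $\sum_{i=1}^{n}(Y_i^{(k)}-f(X_i^{(k)}))^2+m^{-1}\sigma^2\|f\|_{\mathcal{H}}^2$ --- this follows exactly as in Section \ref{sec:nondistributed:KRR}, now with noise variance $m^{-1}\sigma^2$ (Method II) or, equivalently, with rescaled kernel $mK$ (Method I), and it is the functional whose Fr\'echet derivative produces (up to scaling) the score $\hat{S}_{n}^{(k)}$. Evaluating this functional at $\hat{f}_{n}^{(k)}$ and at the zero function and discarding the nonnegative data-fit term, I would obtain $\|\hat{f}_{n}^{(k)}\|_{\mathcal{H}}^2\le (m/\sigma^2)\sum_{i=1}^{n}(Y_i^{(k)})^2$. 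Taking $E_0$ and using $E_0(Y_i^{(k)})^2=E_X f_0(X)^2+\sigma^2=\|f_0\|_2^2+\sigma^2\le L^2+\sigma^2$ (orthonormality of $(\psi_j)$ in $L_2(\mathcal{X})$), together with $mn=N$ and $\sigma^2\gtrsim 1$, yields $E_0\|\hat{f}_{n}^{(k)}\|_{\mathcal{H}}^2\le (N/\sigma^2)(L^2+\sigma^2)\lesssim N$.

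For the second term, write $\tilde{F}(f_0)=\sum_{j\in\mathbb{N}^d}\nu_j f_{0,j}\psi_j$ and use the RKHS norm $\|g\|_{\mathcal{H}}^2=\sum_{j\in\mathbb{N}^d}\mu_j^{-1}g_j^2$ induced by the Mercer decomposition $K(x,x')=\sum_{j}\mu_j\psi_j(x)\psi_j(x')$, so that $\|\tilde{F}(f_0)\|_{\mathcal{H}}^2=\sum_{j\in\mathbb{N}^d}\mu_j^{-1}\nu_j^2 f_{0,j}^2=\sum_{j\in\mathbb{N}^d}\frac{\mu_j N^2}{(\sigma^2+\mu_j N)^2}f_{0,j}^2$. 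Since $(\sigma^2+\mu_j N)^2\ge 4\sigma^2\mu_j N$ by the AM--GM inequality, each coefficient is at most $N/(4\sigma^2)$, hence $\|\tilde{F}(f_0)\|_{\mathcal{H}}^2\le \big(N/(4\sigma^2)\big)\|f_0\|_2^2\le NL^2/(4\sigma^2)\lesssim N$. Combining the two estimates gives $E_0\|\Delta\hat{f}_{n}^{(k)}\|_{\mathcal{H}}^2\lesssim N$, which is the claim.

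I do not expect a genuine obstacle here. The two points that require a little care are: (i) identifying the correct penalty factor $m^{-1}\sigma^2$ in the local KRR objective, so that comparison with $f\equiv 0$ produces exactly the bound $(m/\sigma^2)\sum_i(Y_i^{(k)})^2$; and (ii) keeping track of the cancellation $mn=N$ and of $\sigma^2\gtrsim 1$, so that the resulting constant depends only on $L$ and on the lower bound for $\sigma^2$, and not on $n$.
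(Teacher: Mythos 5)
Your proof is correct and follows essentially the same route as the paper: split $\Delta\hat{f}_{n}^{(k)}$ into $\hat{f}_{n}^{(k)}$ and $\tilde{F}(f_0)$, bound $\|\tilde{F}(f_0)\|_{\mathcal{H}}^2\lesssim N$ via $\mu_j^{-1}\nu_j^2\leq N/\sigma^2$, and control $E_0\|\hat{f}_{n}^{(k)}\|_{\mathcal{H}}^2$ by the KRR optimality property. The only (harmless) difference is that you compare the KRR objective against the zero function, while the paper compares against $\tilde{F}(f_0)$; both yield the $O(N)$ bound under $\sigma^2\gtrsim 1$.
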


\begin{proof}
	First note that
	\begin{align*}
	\|\Delta \hat{f}_{n}^{(k)} \|_{\mathcal{H}}^2\leq 2\|\hat{f}_{n}^{(k)} \|_{\mathcal{H}}^2+2\|\tilde{F}(f_{0})\|_{\mathcal{H}}^2.
	\end{align*}
	For $f_0\in\ell_2(L)$ the second term on the right hand is bounded by
	\begin{align*}
	\|\tilde{F}(f_{0})\|_{\mathcal{H}}^2=\sum_{j\in \mathbb{N}^d}\mu_j^{-1}\nu_j^2 f_{0,j}^2
	\leq \sum_{j\in \mathbb{N}^d}\frac{N^2\mu_j}{(\sigma^2+\mu_j N)^2}f_{0,j}^2\leq N L^2/\sigma^2 .
	\end{align*}
	
	Then by the definition of $\hat{f}_n^{(k)}$ we get that
	\begin{align}
	\sigma^2 \|\hat{f}_n^{(k)} \|_{\mathcal{H}}^2&\leq \sum_{i=1}^n (\hat{f}_n^{(k)}(X^{(k)}_i)-Y^{(k)}_i)^2+\sigma^2 \|\hat{f}_n^{(k)}\|_{\mathcal{H}}^2\nonumber\\
	&\leq  \Big( \sum_{i=1}^n (\tilde{F}(f_0)(X_i^{(k)})-f_0(X_i^{(k)})-\eps^{(k)}_i)^2+\sigma^2 \|\tilde{F}(f_0)\|_{\mathcal{H}}^2\Big)\nonumber\\
	&\leq2\sum_{i=1}^n \tilde{P}(f_0)^2(X_i^{(k)})+2\sum_{i=1}^n(\eps^{(k)}_i)^2+\sigma^2\|\tilde{F}(f_0)\|_{\mathcal{H}}^2.\label{eq:UB:estimator}
	\end{align}
	We conclude the proof by taking the expectation of both sides
	\begin{align*}
	\sigma^2 E_0\|\hat{f}_n^{(k)} \|_{\mathcal{H}}^2\lesssim \sum_{i=1}^n E_0\tilde{P}(f_0)^2(X_i^{(k)})+\sum_{i=1}^n(\eps^{(k)}_i)^2+\sigma^2\|\tilde{F}(f_0)\|_{\mathcal{H}}^2=O(N).
	\end{align*}
\end{proof}

\begin{lemma}\label{lem:card}
The cardinality of the set 
\begin{align}
\mathcal{I}_{I,d}=\{ j=(j_1,...,j_d)\in\mathbb{N}^d_+:\, \prod_{i=1}^dj_i\leq I \}\label{def:small_index}
\end{align}
 satisfies that $|\mathcal{I}_{I,d}|\leq 2^d I\log^{d-1} I$. Furthermore,
\begin{align}
 \sum_{j\in \mathcal{I}_{I,d}^c} \prod_{i=1}^d j_i^{-\gamma-1}\asymp I^{-\gamma} (I/\log I)^{d-1},
\end{align}
for some universal constants depending only on $\gamma$ and $d$.
\end{lemma}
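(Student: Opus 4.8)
The plan is to prove both estimates by induction on the dimension $d$, in each case peeling off the last coordinate and reducing to one-dimensional harmonic and $\gamma$-power sums; both are instances of the classical Piltz (higher-dimensional divisor) bounds. Throughout assume $\gamma>0$, as in the polynomial-eigenvalue application. (I note in passing that for the second estimate the right-hand side should read $I^{-\gamma}(\log I)^{d-1}$: this is the quantity actually used when the lemma is invoked, e.g. in the proof of Corollary~\ref{cor: contraction:poly}, and, since the left-hand side is the tail of the convergent series $\zeta(\gamma+1)^d$ and hence $\to 0$, it is also the only form consistent with the left-hand side.) The elementary facts used repeatedly will be $\sum_{k=1}^{\lfloor I\rfloor}k^{-1}\le 1+\log I$ and, for $\gamma>0$, $\sum_{k>I}k^{-\gamma-1}\asymp I^{-\gamma}$ with $\sum_{k>I}k^{-\gamma-1}\le\gamma^{-1}I^{-\gamma}$.

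For the cardinality bound, write $T_d(I):=|\mathcal I_{I,d}|$; the base case is immediate, $T_1(I)=\lfloor I\rfloor\le I$. For the inductive step, fixing $j_{d+1}=k$ forces $\prod_{i\le d}j_i\le I/k$, feasible only when $k\le I$, so $T_{d+1}(I)=\sum_{k=1}^{\lfloor I\rfloor}T_d(I/k)$. Inserting the inductive hypothesis $T_d(x)\le x(1+\log x)^{d-1}$ and using $1+\log(I/k)\le 1+\log I$ gives $T_{d+1}(I)\le I(1+\log I)^{d-1}\sum_{k\le I}k^{-1}\le I(1+\log I)^{d}$; since $1+\log I\le 2\log I$ for $I\ge e$, this yields $T_d(I)\le 2^{d-1}I(\log I)^{d-1}\le 2^d I(\log I)^{d-1}$ in the (only relevant) regime $I\ge e$.

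For the tail sum, set $U_d(I):=\sum_{j\in\mathcal I_{I,d}^c}\prod_{i=1}^d j_i^{-\gamma-1}$ and split the outer sum over the last coordinate according to whether $j_{d+1}>I$ (the other coordinates then unconstrained) or $j_{d+1}=k\le I$ (in which case $\prod_{i\le d}j_i>I/k$):
\[
U_{d+1}(I)=\zeta(\gamma+1)^{d}\sum_{k>I}k^{-\gamma-1}\;+\;\sum_{k=1}^{\lfloor I\rfloor}k^{-\gamma-1}\,U_d(I/k),\qquad U_1(I)=\sum_{k>I}k^{-\gamma-1}\asymp I^{-\gamma}.
\]
For the upper bound, the inductive hypothesis $U_d(x)\le C_d x^{-\gamma}(1+\log x)^{d-1}$ together with $k^{-\gamma-1}(I/k)^{-\gamma}=I^{-\gamma}k^{-1}$ gives $U_{d+1}(I)\lesssim I^{-\gamma}+I^{-\gamma}(1+\log I)^{d-1}\sum_{k\le I}k^{-1}\lesssim I^{-\gamma}(1+\log I)^{d}$, i.e. $U_d(I)\lesssim I^{-\gamma}(\log I)^{d-1}$. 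For the matching lower bound I would keep only the terms $k\le\sqrt I$ in the second sum, so that $\log(I/k)\ge\tfrac12\log I$ and the inductive hypothesis $U_d(I/k)\gtrsim (I/k)^{-\gamma}(\log(I/k))^{d-1}$ applies; this yields $U_{d+1}(I)\gtrsim (\log I)^{d-1}I^{-\gamma}\sum_{k\le\sqrt I}k^{-1}\gtrsim I^{-\gamma}(\log I)^{d}$.

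The argument is elementary throughout, and the step I expect to require the most care is the lower bound on $U_d$: a crude decomposition of $\mathcal I_{I,d}^c$ (say into $\{j_d>I\}$ and its complement) recovers the power $I^{-\gamma}$ but loses every logarithmic factor, so one must restrict the peeled-off coordinate to a range — here $[1,\sqrt I]$ — on which $\log(I/k)$ is still comparable to $\log I$, and then carry the constants cleanly through the induction. Matching constants and handling the small-$I$ edge cases in the cardinality bound are otherwise routine bookkeeping.
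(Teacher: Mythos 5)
Your proof is correct and follows essentially the same route as the paper's: induction on $d$, peeling off the last coordinate, with the same decomposition of $\mathcal{I}_{I,d+1}^c$ according to whether $j_{d+1}\le I$ or $j_{d+1}>I$ and the same harmonic-sum bookkeeping; your reading of the right-hand side as $I^{-\gamma}(\log I)^{d-1}$ (a typo in the stated lemma) is exactly what the paper's own proof establishes and what is used in Corollary \ref{cor: contraction:poly}. The only noteworthy difference is in the lower bound, where you restrict the peeled-off coordinate to $k\le\sqrt I$ so that $\log(I/k)\ge\tfrac12\log I$, which is a cleaner step than the paper's subtraction of $\sum_{k\le I}k^{-1}(\log k)^{d-1}$ from $\sum_{k\le I}k^{-1}(\log I)^{d-1}$, but both arguments give the same order $I^{-\gamma}(\log I)^{d-1}$.
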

\begin{proof}
We prove both statement by induction, starting with the first one. For $d=1$ it is trivial. Let us assume that it holds for $d$ and consider the case $d+1$. We distinguish cases according the value of $j_{d+1}$. If $j_{d+1}=1$, then $\prod_{i=1}^d j_i\leq I$ holds, if $j_{d+1}=2$, then $\prod_{i=1}^d j_i\leq I/2$ holds, and so on. Hence we can write that
\begin{align*}
|\mathcal{I}_{I,d+1}|\leq \sum_{j_{d+1}=1}^{I}|\mathcal{I}_{I/j_{d+1},d}|\leq 2^{d}\sum_{j_{d+1}=1}^{I}\frac{I}{j_{d+1}}\log^{d-1} \frac{I}{j_{d+1}}< 2^{d+1} I\log^d I,
\end{align*}
where in the last inequality we have used that $\sum_{i=1}^n1/i<1+\log n < 2\log n$.

Note again that for $d=1$ the second statement holds trivially (using Riemann sums for instance). Then assume that it holds for $d$ and consider the case $d+1$. First we deal with the upper bound, where we note that
\begin{align*}
\sum_{j\in \mathcal{I}_{I,d+1}^c} \prod_{i=1}^{d+1} j_i^{-\gamma-1}&=\sum_{j_{d+1}=1}^{I} j_{d+1}^{-\gamma-1} \sum_{j\in \mathcal{I}_{I/j_{d+1},d}^c}\prod_{i=1}^{d} j_i^{-\gamma-1}\\
&\qquad +  \sum_{j_{d+1}=I}^{\infty}j_{d+1}^{-\gamma-1} \prod_{i=1}^d\sum_{j_i=1}^{\infty} j_i^{-\gamma-1}\\
&\lesssim \sum_{j_{d+1}=1}^{I}j^{-1}_{d+1}I^{-\gamma}\big( \log (I/j_{d+1})\big)^{d-1}+ \sum_{j_{d+1}=I}^{\infty}j_{d+1}^{-\gamma-1}\\
&\leq I^{-\gamma}(\log I)^{d-1}  \sum_{j_{d+1}=1}^{I} j_{d+1}^{-1} + I^{-\gamma}\leq I^{-\gamma}\log^{d}(I). 
\end{align*}
Finally, it remained to deal with the lower bound. First note that it is sufficient to show the result for $I\geq C$, for some $C$ large enough (depending only on $d,\gamma$). Then by noting that for $x\geq e^{d-1}$ the function $x^{-1}\log^{d-1} x$ is monotone decreasing, we get that
\begin{align*}
\sum_{j\in \mathcal{I}_{I,d+1}^c} \prod_{i=1}^{d+1} j_i^{-\gamma-1}&\geq\sum_{j_{d+1}=1}^{I} j_{d+1}^{-\gamma-1} \sum_{j\in \mathcal{I}_{I/j_{d+1},d}^c}\prod_{i=1}^{d} j_i^{-\gamma-1}\\
&\gtrsim I^{-\gamma}\Big( \sum_{j_{d+1}=1}^{I}j^{-1}_{d+1}( \log I)^{d-1}- \sum_{j_{d+1}=1}^{I}j^{-1}_{d+1}( \log j_{d+1})^{d-1}\Big)\\
&\geq I^{-\gamma}\Big(( \log I)^{d-1} \int_{x=1}^{I}x^{-1}dx - \sum_{j_{d+1}=1}^{e^{d-1}}j^{-1}_{d+1}( \log j_{d+1})^{d-1}\\
&\qquad\quad -\int_{x=e^{d-1}}^{I}x^{-1}( \log x)^{d-1}dx\Big)\\
&\geq  I^{-\gamma}\Big( ( \log I)^{d}-C_{d,\gamma}-( \log I)^{d}/2)\Big)\gtrsim I^{-\gamma}( \log I)^{d},
\end{align*}
concluding the proof of our statement.
\end{proof}

\begin{lemma}\label{lem:help:bounded}
	There exists an event $A_n^{(k)}$ such that for any $\theta_0\in L_{\infty}(L)$ and $N\leq n^{C_1}$, for some $C_1\geq 1$ there exist constants $C_2,C_3>0$ such that
	\begin{align*}
	&\left\|\Delta\hat{f}_{n}^{(k)}\right\|_{2} 1_{A_n^{(k)}}\leq n^{C_2},\\
	&E_{f_0}\left\|\Delta\hat{f}_{n}^{(k)} - \tilde{F} \circ F^{-1}\circ \hat{S}^{(k)}_{n}(\tilde{F}(f_0))\right\|_2^21_{(A_n^{(k)})^c}\leq e^{-C_3n}.
	\end{align*}
\end{lemma}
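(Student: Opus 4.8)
The plan is to choose $A_n^{(k)}$ so that it constrains only the regression noise in the $k$th machine and leaves the design points $(X_i^{(k)})_{i\le n}$ unconstrained; concretely I would take
\[
A_n^{(k)}=\Big\{\sum_{i=1}^n(\epsilon_i^{(k)})^2\le 2n\sigma^2\Big\}.
\]
Since $\sigma^{-2}\sum_{i=1}^n(\epsilon_i^{(k)})^2\sim\chi_n^2$, the standard chi-square tail bound gives $P_0\big((A_n^{(k)})^c\big)\le e^{-cn}$ for a universal $c>0$. All the deterministic estimates below are valid for every realisation of the design, which is precisely why $A_n^{(k)}$ does not need to control it.

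For the first inequality I would use the variational characterisation of the local KRR estimator: evaluating the local penalised least-squares objective $\sum_{i=1}^n(Y_i^{(k)}-f(X_i^{(k)}))^2+m^{-1}\sigma^2\|f\|_{\mathcal H}^2$ at $f\equiv 0$ and at its minimiser $\hat f_{n}^{(k)}$ gives $\|\hat f_{n}^{(k)}\|_{\mathcal H}^2\le m\sigma^{-2}\sum_{i=1}^n(Y_i^{(k)})^2$. On $A_n^{(k)}$, using $|f_0|\le L$ on $[0,1]^d$, we get $\sum_{i=1}^n(Y_i^{(k)})^2\le 2nL^2+4n\sigma^2\lesssim n$, hence $\|\hat f_{n}^{(k)}\|_{\mathcal H}^2\lesssim mn=N\le n^{C_1}$. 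Combining this with $\|g\|_2\lesssim\|g\|_{\mathcal H}$ (valid since $\sup_j\mu_j<\infty$), with $\|\tilde F(f_0)\|_2\le\|f_0\|_2\le L$ (because $\nu_j\le1$), and with $\|\Delta\hat f_{n}^{(k)}\|_2\le\|\hat f_{n}^{(k)}\|_2+\|\tilde F(f_0)\|_2$, I obtain $\|\Delta\hat f_{n}^{(k)}\|_2\,\mathbf 1_{A_n^{(k)}}\lesssim\sqrt N\le n^{C_1/2}$, which is the first claim (e.g.\ with $C_2=C_1$, for $n$ large; the finitely many remaining $n$ are absorbed into the constant).

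For the second inequality I would start from the identity established in the proof of \eqref{eq: hulp1:old}: applying $\tilde F\circ F^{-1}$ to the relation $F\circ\tilde F^{-1}(\Delta\hat f_{n}^{(k)})-\hat S_n^{(k)}(\tilde F(f_0))=-\frac1n\sum_{i=1}^n\Delta\hat f_{n}^{(k)}(X_i^{(k)})K_{X_i^{(k)}}+\int_{\mathcal X}\Delta\hat f_{n}^{(k)}(x)K_x\,dx$ gives
\[
\Delta\hat f_{n}^{(k)}-\tilde F\circ F^{-1}\circ\hat S_n^{(k)}(\tilde F(f_0))=(\tilde F\circ F^{-1})\Big(\frac1n\sum_{i=1}^n\Delta\hat f_{n}^{(k)}(X_i^{(k)})K_{X_i^{(k)}}-\int_{\mathcal X}\Delta\hat f_{n}^{(k)}(x)K_x\,dx\Big).
\]
Then I would estimate everything crudely: $\tilde F\circ F^{-1}$ multiplies the $j$th coefficient by $\nu_j/\mu_j=N/(\sigma^2+N\mu_j)\le N/\sigma^2$, so its $L_2\to L_2$ operator norm is $\le N/\sigma^2$; by Assumption \ref{ass: bound eigenf} and $\sum_j\mu_j<\infty$ one has $\sup_x\|K_x\|_2\lesssim1$ and $\sup_x\|K_x\|_{\mathcal H}^2=\sup_x K(x,x)\lesssim1$; and by the reproducing property $|\Delta\hat f_{n}^{(k)}(x)|\le\|\Delta\hat f_{n}^{(k)}\|_{\mathcal H}\,\|K_x\|_{\mathcal H}\lesssim\|\Delta\hat f_{n}^{(k)}\|_{\mathcal H}$ for every $x$. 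Hence, deterministically in the design,
\[
\big\|\Delta\hat f_{n}^{(k)}-\tilde F\circ F^{-1}\circ\hat S_n^{(k)}(\tilde F(f_0))\big\|_2^2\lesssim N^2\|\Delta\hat f_{n}^{(k)}\|_{\mathcal H}^2\lesssim N^2\Big(m\sum_{i=1}^n(Y_i^{(k)})^2+N\Big),
\]
where I used $\|\Delta\hat f_{n}^{(k)}\|_{\mathcal H}^2\lesssim m\sigma^{-2}\sum_i(Y_i^{(k)})^2+\|\tilde F(f_0)\|_{\mathcal H}^2$ and $\|\tilde F(f_0)\|_{\mathcal H}^2=\sum_j\frac{N^2\mu_j}{(\sigma^2+N\mu_j)^2}f_{0,j}^2\le\frac{N}{\sigma^2}\|f_0\|_2^2\lesssim N$ (as in Lemma \ref{lem:largeJ}). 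Taking expectations, applying Cauchy--Schwarz together with the elementary moment bound $E_0\big(\sum_{i=1}^n(Y_i^{(k)})^2\big)^2\lesssim n^2$ and with $P_0((A_n^{(k)})^c)\le e^{-cn}$, I conclude
\[
E_0\big\|\Delta\hat f_{n}^{(k)}-\tilde F\circ F^{-1}\circ\hat S_n^{(k)}(\tilde F(f_0))\big\|_2^2\mathbf 1_{(A_n^{(k)})^c}\lesssim N^3\sqrt{P_0((A_n^{(k)})^c)}\lesssim n^{3C_1}e^{-cn/2}\le e^{-C_3n}
\]
for a suitable $C_3>0$ and all $n$ large.

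The only mildly delicate point is the choice of $A_n^{(k)}$: it is tempting to also include a design event controlling $\frac1n\sum_i\psi_j(X_i^{(k)})\psi_\ell(X_i^{(k)})$, but the concentration available there (as in Lemma \ref{lem:bound:random}) is uniform only over finitely many indices and would not produce a \emph{deterministic} bound over all of $\mathcal H$. Keeping $A_n^{(k)}$ noise-only and discarding the design entirely through the variational KRR bound and the reproducing property sidesteps this; everything else is routine, and the generous powers of $N$ lost through crude bounding are harmless against the exponentially small probability $e^{-cn}$.
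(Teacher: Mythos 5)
Your proof is correct and takes essentially the same route as the paper: a noise-only event whose complement has exponentially small probability, the KRR variational inequality to bound the $\mathcal{H}$-norm of the local estimator, and crude polynomial-in-$N$ deterministic bounds that are absorbed by the exponential factor. The only cosmetic differences are the event threshold ($2n\sigma^2$ with a Cauchy--Schwarz step, versus the paper's $n^{C_0}$ combined with a restricted chi-square moment bound) and your use of the score identity plus the reproducing property where the paper simply applies the triangle inequality and a sup-norm bound on the score; neither changes the substance.
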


\begin{proof}
	Let us take $A_n^{(k)}=\{\sum_{i=1}^n (\eps_i^{(k)})^2\leq n^{C_0}\}$, for arbitrary $C_0>1$. Then in view of \eqref{eq:UB:estimator} we have on the event $A_n^{(k)}$ that
	\begin{align*}
	\|\Delta\hat{f}_{n}^{(k)}\|_{2}\leq \| \hat{f}_{n}^{(k)}\|_2+ \|\tilde{F}(f_0)\|_2\lesssim N^{1/2}+n^{C_0}+L\lesssim n^{C_0\vee C_1/2}.
	\end{align*}
	Furthermore, note that
	\begin{align*}
	\|\Delta\hat{f}_{n}^{(k)} - \tilde{F} \circ F^{-1}\circ \hat{S}^{(k)}_{n}(\tilde{F}(f_0))\|_2^2&\lesssim \|\Delta\hat{f}_{n}^{(k)}\|_2^2 + \|\tilde{F} \circ F^{-1}\circ \hat{S}^{(k)}_{n}(\tilde{F}(f_0))\|_2^2\\
	&\lesssim N+\sum_{i=1}^n (\eps_i^{(k)})^2+ N^2\|\hat{S}^{(k)}_{n}(\tilde{F}(f_0))\|_2^2.
	\end{align*}
	
	Furthermore from the definition of $\hat{S}^{(k)}_{n}$, the boundedness of $\mathcal{X}$ and $\|K\|_{\infty}=O(1)$ we get that
	\begin{align*}
	\left\|\hat{S}^{(k)}_{n}(\tilde{F}(f_0))\right\|_2^2\lesssim \left\|\hat{S}^{(k)}_{n}(\tilde{F}(f_0))\right\|_\infty^2&\lesssim \left(\frac{1}{n}\sum_{i=1}^n |\eps_i^{(k)}|\right)^2+\|f_0\|_{\infty}^2\\&\lesssim \frac{1}{n}\sum_{i=1}^n(\eps_i^{(k)})^2 +1.
	\end{align*}
	Finally, since $W_n=\sum_{i=1}^n(\eps_i^{(k)})^2\sim \chi_n^2$, note that for $n$ large enough
$$E1_{W_n\geq n^{C_0}}W_n= \int_{\{x\geq n^{C_0}\}} \frac{1}{2^{n/2} \Gamma(n/2)}x^{n/2}e^{-x/2}dx\leq e^{-n^{C_0}/3},$$
reulting in
$$E_{f_0}\left\|\Delta\hat{f}_{n}^{(k)} - \tilde{F} \circ F^{-1}\circ \hat{S}^{(k)}_{n}(\tilde{F}(f_0))\right\|_2^21_{(A_n^{(k)})^c}\lesssim
e^{-n^{C_0}/3}N^2\leq e^{-n},$$
for large enough $n$, concluding the proof of the lemma.

\end{proof}

\begin{lemma}\label{lem: bound-maxfunc}
Let $r,s>0$ such that $r>s/d$ and $f:[1,\infty)^d\to\mathbb{R}$ defined as
$$f(x)=\left(\prod_{i=1}^d x_i\right)^r\left(\sum_{i=1}^dx_i\right)^{-s}.$$
Then $f$ is bounded from above by $d^{-s}J^{r-s/d}$ on the set $\mathcal{N}:=\{x\in[1,\infty)^d:\prod_{i=1}^dx_i\leq J\}$ with $J>1$. 
\end{lemma}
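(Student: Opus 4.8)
The plan is to reduce the multivariate function $f$ to a function of the single scalar quantity $P:=\prod_{i=1}^d x_i$ by bounding the denominator from below, and then to exploit monotonicity of a power function on the set $\mathcal{N}$. Concretely, first I would apply the AM--GM inequality to the coordinates $x_1,\dots,x_d$, which gives $\sum_{i=1}^d x_i \geq d\big(\prod_{i=1}^d x_i\big)^{1/d} = d\,P^{1/d}$. Since $s>0$, raising both sides to the power $-s$ reverses the inequality, so $\big(\sum_{i=1}^d x_i\big)^{-s} \leq d^{-s} P^{-s/d}$, and therefore
\begin{align*}
f(x) = P^r\Big(\sum_{i=1}^d x_i\Big)^{-s} \leq d^{-s}\, P^{\,r-s/d}
\end{align*}
for every $x\in[1,\infty)^d$.

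Next I would use the constraint defining $\mathcal{N}$. On $\mathcal{N}$ we have $P = \prod_{i=1}^d x_i \leq J$, and also $P\geq 1$ since each $x_i\geq 1$. The hypothesis $r>s/d$ guarantees that the exponent $r-s/d$ is strictly positive, so the map $t\mapsto t^{\,r-s/d}$ is increasing on $[1,\infty)$; hence $P^{\,r-s/d} \leq J^{\,r-s/d}$. Combining this with the display above yields $f(x) \leq d^{-s} J^{\,r-s/d}$ for all $x\in\mathcal{N}$, which is exactly the claimed bound.

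There is no real obstacle here: the statement is an elementary two-step estimate, AM--GM followed by monotonicity of a power. The only point that requires care is to check that the exponent $r-s/d$ is nonnegative before invoking monotonicity and discarding the lower bound $P\geq 1$; this is precisely what the assumption $r>s/d$ provides. (If one wanted to allow $r=s/d$ the bound would still hold with $J^{\,r-s/d}=1$, but that borderline case is not needed for the application.)
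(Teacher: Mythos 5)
Your proof is correct and is essentially the same as the paper's: both apply the AM--GM inequality to bound $\sum_{i=1}^d x_i$ from below by $d\big(\prod_{i=1}^d x_i\big)^{1/d}$ and then use $\prod_{i=1}^d x_i\leq J$ together with the positive exponent $r-s/d$. Your explicit remark that $r>s/d$ is what licenses the monotonicity step is a small point of added care over the paper's terser wording, but the argument is identical.
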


\begin{proof}
From the inequality of arithmetic and geometric means, we know that for all $x\in[1,\infty)^d$
$$\sum_{i=1}^dx_i\geq d\left(\prod_{i=1}^d x_i\right)^{1/d}.$$
Thus, we can bound $f$ from above by
$$f(x)\leq d^{-s}\left(\prod_{i=1}^d x_i\right)^{r-s/d}\leq d^{-s}J^{r-s/d},$$
on $\mathcal{N}$ concluding the proof.
\end{proof}

\begin{lemma}\label{lem: bound-minfunc}
Let $s>0$ and $f:[1,\infty)^d\to\mathbb{R}$ defined as
$$f(x)=(\sum_{i=1}^dx_i)^{-s}.$$
Then $f$ is bounded from above by $d^{-s}J^{-s/d}$ on the set $\mathcal{N}:=\{x\in[1,\infty)^d:\prod_{i=1}^dx_i\geq J\}$ with $J>1$. 
\end{lemma}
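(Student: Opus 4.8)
The plan is to mirror the argument of Lemma \ref{lem: bound-maxfunc} almost verbatim, the only difference being the direction of the constraint on $\prod_{i=1}^d x_i$. First I would invoke the inequality of arithmetic and geometric means: for every $x\in[1,\infty)^d$,
$$\sum_{i=1}^d x_i \geq d\Big(\prod_{i=1}^d x_i\Big)^{1/d}.$$
Since the map $t\mapsto t^{1/d}$ is increasing on $[0,\infty)$, on the set $\mathcal{N}$ where $\prod_{i=1}^d x_i\geq J$ this gives the lower bound $\sum_{i=1}^d x_i\geq dJ^{1/d}$.

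Next, because $s>0$, the map $u\mapsto u^{-s}$ is decreasing on $(0,\infty)$, so applying it to both sides of the previous bound yields
$$f(x)=\Big(\sum_{i=1}^d x_i\Big)^{-s}\leq \big(dJ^{1/d}\big)^{-s}=d^{-s}J^{-s/d}$$
for all $x\in\mathcal{N}$, which is exactly the claimed inequality. No index set, no partitioning, and no induction is needed here; the statement is a one-line consequence of AM-GM together with monotonicity of the power function.

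There is essentially no obstacle in this lemma — the only point requiring the slightest care is to make sure the monotonicity directions are used correctly (increasing $t^{1/d}$ to pass the constraint through, then decreasing $u^{-s}$ to pass it to $f$), and to keep the hypotheses $J>1$ and $s>0$ explicit so that the exponents behave as expected. I would keep the write-up to two or three lines, in the same style as the preceding lemma.
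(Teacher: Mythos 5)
Your proof is correct, and it is in fact a slightly cleaner route than the one in the paper. The paper first argues via the gradient that $f$ is decreasing in every coordinate, so that its maximum over $\mathcal{N}$ is attained on the face $\prod_{i=1}^d x_i=J$, and only then applies the AM--GM inequality at such a point to get $\sum_{i=1}^d x_i\geq dJ^{1/d}$. You bypass the reduction-to-the-boundary step entirely: since AM--GM gives $\sum_{i=1}^d x_i\geq d\big(\prod_{i=1}^d x_i\big)^{1/d}\geq dJ^{1/d}$ at \emph{every} point of $\mathcal{N}$, the monotonicity of $u\mapsto u^{-s}$ for $s>0$ immediately yields the bound, with no need to locate where the maximum sits. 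Both arguments rest on the same key inequality, but yours avoids the mildly informal "attains its maximum at $\prod_{i=1}^d x_i=J$" claim (on an unbounded set one should really speak of a supremum over the constraint set, which your version never has to address), so it is both shorter and a bit more rigorous; nothing is lost relative to the paper's proof.
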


\begin{proof}
Since $f$ is differentiable on its domain, we can compute its gradient
$$(\nabla f)_\ell=-s(\sum_{i=1}^dx_i)^{-s-1}<0,$$
for all $\ell\in\{1,...,d\}$. Thus, the function attains its maximum at $\prod_{i=1}^dx_i=J$. At the maximum point, in view the inequality of arithmetic and geometric means, $\sum_{i=1}^dx_i\geq d\left(\prod_{i=1}^d x_i\right)^{1/d}=dJ^{1/d}$. The statement of the lemma follows by raising both sides to the $-s$ power.
\end{proof}



%

\bibliographystyle{acm}
\bibliography{biblio}

\end{document}